\newcommand{\Set}{{\rm Set}}
\newcommand{\Sh}{{\rm Sh}}
\newcommand{\Top}{{\rm Top}}
\newcommand{\Space}{{\rm Space}}
\newcommand{\Diff}{{\rm Diff}}
\newcommand{\Chen}{{\sf Chen}}
\newcommand{\Simplicial}{{\sf F}}
\newcommand{\Diffeological}{{\sf Diffeological}}
\newcommand{\D}{{\sf D}}
\newcommand{\Ch}{{\rm Ch}}
\newcommand{\So}{{\rm So}}
\newcommand{\im}{{\rm im}}
\renewcommand{\hom}{{\rm hom}}
\newcommand{\To}{\Rightarrow}
\renewcommand{\to}{\rightarrow}
\newcommand{\maps}{\colon}
\newcommand{\Conc}{\mathrm{Conc}}
\newcommand{\X}{{\mathbf{X}}}
\newcommand{\Y}{{\mathbf{Y}}}
\newcommand{\f}{{\mathbf{f}}}
\newcommand{\op}{{\rm op}}
\newcommand{\ev}{{\rm ev}}
\newcommand{\R}{{\mathbb R}}
\newcommand{\calc}{\ensuremath{\mathcal{C}}\xspace}
\newcommand{\cald}{\ensuremath{\mathcal{D}}\xspace}
\newcommand{\cinf}{\ensuremath{\calc^\infty}\xspace}
\newcommand{\comment}[1]{}
\renewcommand{\u}[1]{\underline{#1}}
\newtheorem{theorem}{Theorem}
\newtheorem{definition}[theorem]{Definition}
\newtheorem{lemma}[theorem]{Lemma}
\newtheorem{proposition}[theorem]{Proposition}
\title{
Convenient Categories of Smooth Spaces}
\author{John C.\ Baez and Alexander E.\ Hoffnung \\
Department of Mathematics,  University of California\\
Riverside, California 92521 \\
USA \\
\\
email: baez@math.ucr.edu, alex@math.ucr.edu }
\date{July 10, 2008}
\begin{document}
\bibliographystyle{plain}
\maketitle

\begin{abstract}
\noindent 
A `Chen space' is a set $X$ equipped with a collection of
`plots' --- maps from convex sets to $X$ --- satisfying 
three simple axioms.  While an 
individual Chen space can be much worse than a smooth manifold,
the category of all Chen spaces is much better behaved
than the category of smooth manifolds.  For example, any subspace 
or quotient space of a Chen space is a Chen space, and the space of 
smooth maps between Chen spaces is again a Chen space.
Souriau's `diffeological spaces' share these convenient properties.  
Here we give a unified treatment 
of both formalisms.  Following ideas of Penon and Dubuc, we show that 
Chen spaces, diffeological spaces, and even simplicial complexes are 
examples of `concrete sheaves on a concrete site'.  As a result, 
the categories of such spaces are locally cartesian closed, with
all limits, all colimits, and a weak subobject classifier.  
For the benefit of differential geometers, our treatment explains 
most of the category theory we use.

\end{abstract}

\section{Introduction}
\label{introduction}

Algebraic topologists have become accustomed to working in a category
of spaces for which many standard constructions have good formal
properties: mapping spaces, subspaces and quotient spaces,
limits and colimits, and so on.  In differential geometry the 
situation is quite different, since the most popular category,
that of finite-dimensional smooth manifolds, lacks almost all
these features.  So, researchers are beginning to seek a `convenient 
category' of smooth spaces in which to do differential geometry.

In this paper we study two candidates: Chen spaces and diffeological
spaces.  But before we start, it is worth recalling the lesson of 
algebraic topology in a bit more detail.  Dissatisfaction arose when it 
became clear that the category of topological spaces suffers from a 
defect: there is generally no way to give the set $C(X,Y)$ of 
continuous maps from a space $X$ to a space $Y$ a topology such 
that the natural map 
\[      
\begin{array}{rcl}
      C(X \times Y, Z) &\to& C(X, C(Y, Z))  \\
                   f &\mapsto& \tilde{f} 
\end{array}
\]
\[              \tilde f(x)(y) = f(x,y)  \]
is a homeomorphism.   In other words, this category fails to
be cartesian closed.  This led to the search for a better framework ---
or as Brown \cite{Brown:1963} put it, a ``convenient category".  

Steenrod's paper ``A convenient category of topological spaces" 
\cite{Steenrod:1967} popularized the idea of restricting attention 
to spaces with a certain property to obtain a cartesian closed category.  
It was later realized that by adjusting this property a bit, we can 
also make quotient spaces better behaved.  The resulting category --- with 
compactly generated spaces as objects, and continuous maps as 
morphisms --- has now been widely adopted in algebraic topology 
\cite{May}.  This shows that it is perfectly possible, and at times quite
essential, for a discipline to change the category that constitutes
its main subject of inquiry.  Something similar happened in
algebraic geometry when Grothendieck invented schemes as a
generalization of algebraic varieties.

Now consider differential geometry.  Like the category of topological 
spaces, the category of smooth manifolds fails to be cartesian closed.  
Indeed, if $X$ and $Y$ are finite-dimensional smooth manifolds, the 
space of smooth maps $\cinf(X,Y)$ is hardly ever the same sort
of thing.  It is a kind of {\it infinite-dimensional} manifold --- but 
making the space of smooth maps between \textit{these} into an
infinite-dimensional manifold becomes more difficult.   
It can be done \cite{Kriegl:1984,Michor:1984}, but
there are still many spaces on which we can do
differential geometry that do not live in the resulting 
cartesian closed category.  The simplest examples are manifolds
with boundary, or more generally manifolds with corners.
There are also many formal properties one might want, which are 
lacking: for example, a subspace or quotient space of a manifold is 
rarely a manifold, and the category of manifolds does not have limits 
and colimits.

In 1977, Chen defined a simple notion that avoids all these problems
\cite{Chen:1977}.  A `Chen space' is a set $X$ equipped with a
collection of `plots' --- maps $\varphi \maps C \to X$ where $C$ is
any convex subset of any Euclidean space $\R^n$ --- obeying three
simple axioms.  Despite a superficial resemblance to charts in the
theory of manifolds, plots are very different: we should think of a
plot in $X$ as an {\it arbitrary} smooth map to $X$ from a convex
subset of a Euclidean space of {\it arbitrary} dimension.  So instead
of ensuring that Chen spaces look nice locally, plots play a different
role: they determine which maps between Chen spaces are smooth.  Given
a map $f \maps X \to Y$ between Chen spaces, $f$ is `smooth' if and
only if for any plot in $X$, say $\varphi \maps C \to X$, the
composite $f \varphi \maps C \to Y$ is a plot in $Y$.

In 1980, Souriau introduced another category of smooth spaces:
`diffeological spaces' \cite{Souriau:1980}.  The definition of these
closely resembles that of Chen spaces: the only difference is that the
domain of a plot can be any {\it open} subset of $\mathbb{R}^n$,
instead of any convex subset.  As a result, Chen spaces and
diffeological spaces have many similar properties.  So, in what
follows, we use `smooth space' to mean {\it either} Chen space {\it
or} diffeological space.  We shall see that:
\begin{itemize}
\item
Every smooth manifold is a smooth space, and a map between smooth
manifolds is smooth in the new sense if and only if it is smooth
in the usual sense.
\item 
Every smooth space has a natural topology, and smooth maps 
between smooth spaces are automatically continuous.
\item 
Any subset of a smooth space becomes a smooth space in a natural
way, and the inclusion of this subspace is a smooth map.
Subspaces of a smooth space are classified by their characteristic
functions, which are smooth maps taking values in 
$\{0,1\}$ equipped with its `indiscrete' smooth structure.
So, we say $\{0,1\}$ with its indiscrete smooth
structure is a `weak subobject classifier' for the category
of smooth spaces (see Def.\ \ref{subobject.class}).
\item
The quotient of a smooth space under any equivalence relation becomes
a smooth space in a natural way, and the quotient map is smooth.
\item
The category of smooth spaces has all limits and colimits.
\item
Given smooth spaces $X$ and $Y$, the set $\cinf(X,Y)$ of all
smooth maps from $X$ to $Y$ can be made into a smooth space in
such a way that the natural map 
\[
      \cinf(X \times Y, Z) \to \cinf(X, \cinf(Y, Z))  
\]
is a smooth map with a smooth inverse.
So, the category of smooth spaces is cartesian closed.

\item
More generally, given any smooth space $B$, the category of 
smooth spaces `over $B$' --- that is, equipped with
maps to $B$ --- is cartesian closed.  So, we say the category
of smooth spaces is `locally cartesian closed' (see Def.\ 
\ref{loc.cart.closed} for details).
\end{itemize}
The goal of this paper is to present a unified approach to Chen
spaces and diffeological spaces that explains why they share
these convenient properties.

All this convenience comes
with a price: both these categories contain many spaces 
whose local structure is far from that of Euclidean space.  
This should not be surprising.  For example, the subset of 
a manifold defined by an equation between smooth maps,
\[     Z = \{x \in M \colon f(x) = g(x) \}, \]
is not usually a manifold in its own right.  In fact, $Z$ can easily
be as bad as the Cantor set if $M = \R$.  But it is a smooth space.
It is nice having the solution set of an equation between smooth maps
be a smooth space, but the price we pay is that a smooth space can be
locally as bad as the Cantor set.

So, we should not expect the theory of smooth spaces to support the
wealth of fine-grained results familiar from the theory of smooth
manifolds.  Instead, it serves as a large context for general ideas.
For a taste of just how much can be done here, see Iglesias--Zemmour's
book on diffeological spaces \cite{Iglesias}.  There is no real 
conflict, since smooth manifolds form a full subcategory of the 
category of smooth spaces.  We can use the big category for abstract 
constructions, and the small one for theorems that rely on good 
control over local structure.

Since we want differential geometers to embrace the notions
we are describing, our treatment will be as self-contained
as possible.  This requires a little introduction to sheaves
on sites, because the key fact underlying our main results
is that both Chen spaces and diffeological spaces are examples
of `concrete sheaves on a concrete site'.   For example, Chen spaces 
are sheaves on a site $\Chen$: the category whose objects are convex 
subsets of $\mathbb{R}^n$ and whose morphisms are smooth maps, equipped 
with a certain Grothendieck topology.  However, not {\it all} sheaves 
on this site count as Chen spaces, but only those satisfying a certain 
`concreteness' property, which guarantees that any Chen space has a 
well-behaved underlying set.   Formulating this property uses the fact 
that $\Chen$ itself is a `concrete site'.  Similarly, the category 
of diffeological spaces can be seen as the category of concrete sheaves 
on a concrete site $\Diffeological$.

The category of {\it all} sheaves on a site is extremely nice: 
it is a topos.  Here, following ideas of Penon
\cite{Penon:1973, Penon:1977} and Dubuc \cite{Dubuc:1979,Dubuc2:2006},
we show that the category of concrete sheaves on a concrete site 
is also nice, but slightly less so: it is a `quasitopos'. 
This yields many of the good properties listed above.

Various other notions of `smooth space' are currently being studied.
Perhaps the most elegant approach is synthetic differential geometry 
\cite{Kock}, which drops the assumption that a smooth space be a set
equipped with extra structure.  This gives a topos of smooth spaces,
and it allows a rigorous treatment of calculus using infinitesimals.

Most other approaches treat smooth spaces as sets equipped with a
specified class of `maps in', `maps out', or `maps in and out'.  
We recommend Stacey's work \cite{Stacey:2007} for a detailed comparison
of these approaches.  Chen and Souriau take the `maps in' approach, 
where a plot in a smooth space $X$ is a map into $X$, and a function 
$f \maps X \to Y$ between smooth spaces is smooth when its composite 
with every plot in $X$ is a plot in $Y$.  Smith \cite{Smith:1966}, 
Sikorski \cite{Kreck,Sikorski:1971} and Mostow \cite{Mostow} instead 
follow the `maps out' approach, in which a smooth space $X$ comes equipped 
with a collection of `coplots' $\varphi \maps X \to C$ for certain spaces 
$C$, and  a map $f \maps X \to Y$ between smooth spaces is smooth when 
its composite with every coplot on $Y$ is a coplot on $X$.   Fr\"{o}licher 
takes the `maps in and out' approach, in which a smooth space is 
equipped with both plots and coplots \cite{Frolicher:1982,Laubinger}.  
This gives two ways to determine the smoothness of a map between 
smooth spaces, which are required to give the same answer.  Our work covers 
a wide class of definitions that take the `maps in' approach.

The structure of the paper is as follows.  In Section \ref{smooth_spaces}, 
we define Chen spaces and diffeological spaces and give some examples.  We 
also discuss the relation between these two formalisms, focusing on manifolds 
with corners and the work of Stacey \cite{Stacey:2007}.
In Section \ref{convenient_properties}, we list many convenient properties 
shared by these categories.  In Section \ref{generalized_spaces} we recall
the concept of a sheaf on a site and show that Chen spaces and 
diffeological spaces are `concrete' sheaves on `concrete' sites.  Simplicial 
complexes give another interesting example.  In Section \ref{quasitopos} we 
show that any category of concrete sheaves on a concrete site is a quasitopos 
with all limits and colimits.   Most of the properties described in Section 
\ref{convenient_properties} follow as a direct result.  

\section{Smooth Spaces}
\label{smooth_spaces}

Souriau's notion of a `diffeological space' \cite{Souriau:1980} is very simple:

\begin{definition}
An {\bf open set} is an open subset of $\R^n$.  A function 
$f\maps U\to U'$ between open sets is called {\bf smooth} if it has 
continuous derivatives of all orders.  
\end{definition}

\begin{definition}
A {\bf diffeological space} is a set $X$ equipped with, for each open 
set $U$, a set of functions
\[\varphi\maps U\to X\]
called {\bf plots in $X$}, such that:
\begin{enumerate}
\item  If $\varphi$ is a plot in $X$ and $f\maps U' \to U$ is
a smooth function between open sets, then $\varphi f$ is a plot in $X$.
\item Suppose the open sets $U_j \subseteq U$ form an open cover of the
open set $U$, with inclusions $i_j \maps U_j \to U$.  
If $\varphi i_j$ is a plot in $X$
for every $j$, then $\varphi$ is a plot in $X$. 
\item Every map from the one point of $\R^0$ to $X$ is a plot in $X$.
\end{enumerate}
\end{definition}

\begin{definition}
Given diffeological spaces $X$ and $Y$, a function $f\maps X\to Y$ is a 
{\bf smooth map} if, for every plot $\varphi$ in $X$, the composite 
$f\varphi$ is a plot in $Y$.
\end{definition}

Chen actually considered several different definitions.  Here we use
his final, most refined approach \cite{Chen:1977}, which closely
resembles Souriau's:

\begin{definition} \label{convex.set}
A {\bf convex set} is a convex subset of $\R^n$ with nonempty interior.  
A function $f \maps C \to C'$ between convex sets is called {\bf smooth} if 
it has continuous derivatives of all orders.
\end{definition}

\begin{definition}
A {\bf Chen space} is a set $X$ equipped with, for each convex set $C$, a 
set of functions
\[\varphi\maps C\to X\]
called {\bf plots in $X$}, satisfying these axioms:
\begin{enumerate}
\item  If $\varphi$ is a plot in $X$ and $f\maps C' \to C$ is
a smooth function between convex sets, then $\varphi f$ is a plot in $X$.
\item Suppose the convex sets $C_j \subseteq C$ form an open cover of the
convex set $C$ with its topology as a subspace of $\R^n$.  Denote the
inclusions as $i_j \maps C_j \to C$.  If $\varphi i_j$ is a plot in $X$
for every $j$, then $\varphi$ is a plot in $X$. 
\item Every map from the one point of $\R^0$ to $X$ is a plot in $X$.
\end{enumerate}
\end{definition}

\begin{definition}
Given Chen spaces $X$ and $Y$, a function $f\maps X\to Y$ is a 
{\bf smooth map} if, for every plot $\varphi$ in $X$, the composite 
$f\varphi$ is a plot in $Y$.
\end{definition}

It is instructive to see how Chen's definition evolved.
Of course he did not speak of `Chen spaces'; he called them 
`differentiable spaces'.  
In 1973, he took a differentiable space to be a Hausdorff space 
$X$ equipped with continuous plots $\varphi \maps C \to X$ satisfying
axioms 1 and 3 above, where the domains $C$ were closed convex subsets of 
Euclidean space \cite{Chen:1973}.  In 1975, he added a preliminary version 
of axiom 2 and dropped the condition that $X$ be Hausdorff \cite{Chen:1975}.
  
Starting in 1977, Chen used a definition equivalent to the one 
above \cite{Chen:1977,Chen:1982}.    In particular, he dropped the 
topology on $X$, the continuity of $\varphi$, and the condition that $C$ 
be closed.  This marks an important realization, emphasized by Stacey 
\cite{Stacey:2007}:  we can give a space a smooth structure without 
first giving it a topology.  Indeed, we shall see that a smooth structure 
determines a topology!  

The notion of a smooth function $f \maps C \to C'$ between convex sets 
is a bit subtle, particularly for points on the boundary of $C$.   
One tends to imagine $C$ as either open or 
closed, but the generic situation is more messy.  For example, $C$ could 
be the closed unit disk $D^2$ minus the set $Q$ of points on the unit circle 
with rational coordinates.  Both $Q$ and its complement are dense in the
unit circle.

Situations like this, while far from our main topic of interest,
deserve a little thought.  So, suppose $C \subseteq \R^n$ and $C' \subseteq 
\R^m$ are convex subsets with nonempty interior.  To define the $k$th 
derivative of a function from $C$ to $C'$ it suffices to define the first 
derivative of a function $F \maps C \to V$ for any finite-dimensional normed 
vector space $V$, since when this derivative exists it will be a function 
$dF$ from $C$ to the normed vector space of linear maps $\hom(\R^n,V)$.
We can then define the derivative of this function, and so on.  So: we say 
the derivative of $F$ exists at the point $x \in C$ if there is a linear 
map $(dF)_x \maps \R^n \to V$ such that 
\[         \frac{\|F(y) - F(x) - dF_x(y - x)\|}{\|y - x\|} \to 0 \]
as $y \to x$ for $y \in C - \{x\}$.   Note that since $C$ is convex with
nonempty interior, $dF_x$ is unique if it exists.

This is the usual definition going back to Fr\'echet, and scarcely worth 
remarking on, except for the obvious caveat that $y$ must lie in $C$.  
In the case $C = [0,1]$, this means we are using one-sided derivatives 
at the endpoints.  In the case of the convex set $D^2 - Q$, it means 
we are using a generalization of one-sided derivatives at all points 
on the boundary of this set, which is the unit circle minus $Q$.
 
Luckily, whenever $C$ and $C'$ are convex sets, we can characterize
smooth functions $f \maps C \to C'$ in three equivalent ways:
\begin{enumerate}
\item 
The function $f \maps C \to C'$ has continuous derivatives of all orders.
\item
The function $f \maps C \to C'$ 
has continuous derivatives of all orders in the interior of $C$, and these 
extend continuously to the boundary of $C$.
\item
If $\gamma \maps \R \to C$ is a smooth curve in $C$, then 
$f \gamma $ is a smooth curve in $C'$.
\end{enumerate}
The equivalence of conditions 1 and 2 is not hard; the equivalence
of 2 and 3 was proved by Kriegl \cite{Kriegl:1997}, and appears
as Theorem 24.5 in Kriegl and Michor's book \cite{KM}.

Since most of our results apply both to Chen spaces and diffeological
spaces, we lay down the following conventions:

\begin{definition}\label{convention}
We use {\bf smooth space} to mean either a Chen space or a diffeological 
space, and use $\cinf$ to mean either the category of Chen spaces and 
smooth maps, or diffeological spaces and smooth maps.   We use the term 
{\bf domain} to mean either a convex set or an open set, depending on 
context.

Henceforth, any statement about smooth spaces or the category $\cinf$ 
holds for \emph{both} Chen spaces and diffeological spaces.  
\end{definition}

\subsection{Examples}

Next we give some examples.  For these it is
handy to call the set of plots in a smooth space its {\bf smooth 
structure}.  So, we may speak of taking a set and putting a smooth 
structure on it to obtain a smooth space.

\begin{enumerate}
\item Any domain $D$ becomes a smooth space, where the plots 
$\varphi\maps D' \to D$ are just the smooth functions.

\item Any set $X$ has a {\bf discrete} smooth structure such that the
plots $\varphi \maps D \to X$ are just the constant functions.

\item Any set $X$ has an {\bf indiscrete} smooth structure where every
function $\varphi \maps D \to X$ is a plot.

\item Any smooth manifold $X$ becomes a smooth space where $\varphi \maps D
\to X$ is a plot if and only if $\varphi$ has continuous derivatives of
all orders.  Moreover, if $X$ and $Y$ are smooth manifolds, $f \maps 
X \to Y$ is a morphism in $\cinf$ if and only if it is 
smooth in the usual sense. 

\item Given any smooth space $X$, we can endow it with a new smooth 
structure where we keep only the plots of $X$ that factor through a chosen 
domain $D_0$.    When $D_0 = \R$ this smooth structure is called the 
`wire diffeology' in the theory of diffeological spaces
\cite{Iglesias}.  While this construction gives many examples 
of smooth spaces, these seem to be useful mainly as counterexamples to 
naive conjectures.

\item Any topological space $X$ can be made into a smooth space where we 
take the plots to be all the continuous maps $\varphi \maps C \to X$.  
Since every smooth map is continuous this defines a smooth structure.
Again, these examples mainly serve to disprove naive conjectures.
\end{enumerate}

If $\Diff$ is the category of smooth finite-dimensional manifolds and 
smooth maps, our fourth example above gives a full and faithful functor
\[          \Diff \to \cinf .\]
So, we can think of $\cinf$ as a kind of `extension' or `completion' 
of $\Diff$ with better formal properties.

Any smooth space $X$ can be made into topological space with the
finest topology such that all plots $\varphi \maps D \to X$ are
continuous.  With this topology, smooth maps between smooth spaces
are automatically continuous.  This gives a faithful functor
\[         \cinf \to \Top . \]
In particular, if we take a smooth manifold, regard it as a smooth
space, and then turn it into a topological space this way, we recover
its usual topology.

\subsection{Comparison}

We should also say a bit about how Chen spaces and diffeological 
spaces differ, and how they are related.  To begin with, let us
compare their treatment of manifolds with boundary, or more generally
manifolds with corners \cite{Janich,Laures}.  

An $n$-dimensional manifold with corners $M$ has charts of the form 
$\varphi \maps X_k \to M$, where 
\[    X_k = \{(x_1, \dots, x_n) \in \R^n \colon  x_1, \dots,  x_k \ge 0 \} \]
for $k = 0, \dots, n$.  The case $k = 1$ gives a half-space, familiar
from manifolds with boundary.  Since $X_k \subset \R^n$ 
is convex, any chart $\varphi \maps X_k \to M$ can be made 
into a plot in Chen's sense.   So, if we make $M$ 
into a Chen space where the plots $\varphi \maps C \to M$ are just 
maps that are smooth in the usual sense, it follows that any map between 
manifolds with corners $f \maps M \to N$ is smooth as a map of Chen
spaces if and only if it is smooth in the usual sense.  

However, the subset $X_k \subset \R^n$ is typically not open.  
So, we cannot make a chart for a manifold with corners into a plot
in the sense of diffeological spaces.  Nonetheless, we can make any 
manifold with corners $M$ into a diffeological space where the plots 
$\varphi \maps U \to M$ are the maps that are smooth in the usual 
sense.  And then, in fact, a map between manifolds with corners is 
smooth as a map between diffeological spaces if and only if it 
is smooth in the usual sense!

The key to seeing this is the theorem of Kriegl mentioned above.  
Since the issues involved are local, it suffices to consider maps $f \maps 
X_k \to \R^m$.  Suppose $f \maps X_k \to \R^m$ is smooth in the sense of 
diffeological spaces.  Then the composite $f \gamma$ is smooth for any 
smooth curve $\gamma \maps \R \to X_k$.  By Kriegl's theorem, 
this implies that $f$ has continuous derivatives of all orders in the 
interior of $X_k$, extending continuously to the boundary.  So, $f$ is 
smooth in the usual sense for manifolds with corners.  Conversely, any 
$f \maps X_k \to \R^m$ smooth in the usual sense is clearly 
smooth in the sense of diffeological spaces.   

Stacey has given a more general comparison of Chen spaces versus diffeological 
spaces \cite{Stacey:2007}.  To briefly summarize this, let us write 
$\Chen\Space$ for the category of Chen spaces, and $\Diffeological\Space$ 
for the category of diffeological spaces.   Stacey has shown that
these categories are not equivalent.  However, he has 
constructed some useful functors relating them.  These take advantage of 
the fact that every open subset of $\mathbb{R}^n$ becomes a Chen space 
with its subspace smooth structure, and conversely, every convex subset 
of $\mathbb{R}^n$ becomes a diffeological space.  

Using this, Stacey defines for any Chen space $X$ a diffeological space 
$\So X$ with the same underlying set, where $\varphi \maps U \to \So X$ 
is a plot if and only if  $\varphi \maps U \to X$ is a smooth map 
between Chen spaces.   This extends to a functor
\[             \So \maps \Chen\Space \to \Diffeological\Space  \]
that is the identity on maps.  He also defines for any diffeological
space $Y$ a Chen space $\Ch^\sharp Y$ with the same underlying set, where
$\varphi \maps C \to \Ch^\sharp Y$ is a plot if and only if $\varphi \maps
C \to Y$ is a smooth map between diffeological spaces.  Again, this extends 
to a functor
\[             \Ch^\sharp \maps \Diffeological\Space \to  \Chen\Space \]
that is the identity on maps.  Stacey shows that 
\begin{center}
  $f \maps X \to \Ch^\sharp Y$ is a smooth map between Chen spaces \break
    $\Updownarrow$ \break
  $f \maps \So X \to Y$ is a smooth map between diffeological spaces. 
\end{center}
In other words, $\Ch^\sharp$ is the right adjoint of $\So$.  

The functor $\So$ also has a left adjoint
\[             \Ch^\flat \maps \Diffeological\Space \to  \Chen\Space \]
which acts as the identity on maps.   
This time the adjointness means that
\begin{center}
     $f \maps \Ch^\flat Y \to X$ is a smooth map between Chen spaces  \break
     $\Updownarrow$ \break  
     $f \maps Y \to \So X$ is a smooth map between diffeological spaces.
\end{center}

Furthermore, Stacey shows that both these composites:
\[
\xymatrix{
  \Diffeological\Space \ar[r]^(.6){\Ch^\sharp} & 
  \Chen\Space \ar[r]^(.4){\So} & 
  \Diffeological\Space 
  }
\]
\[
\xymatrix{
  \Diffeological\Space \ar[r]^(.6){\Ch^\flat} & 
  \Chen\Space \ar[r]^(.4){\So} & 
  \Diffeological\Space 
  }
\]
are equal to the identity.   With a little work, it follows that both 
$\Ch^\sharp$ and $\Ch^\flat$ embed $\Diffeological\Space$ isomorphically 
as a full subcategory of $\Chen\Space$: a `reflective' subcategory in 
the first case, and a `coreflective' one in the second.

The embedding $\Ch^\flat$ is a bit strange: as shown by Stacey, even
the ordinary closed interval fails to lie in its image!  To see this,
he takes $I$ to be $[0,1] \subset \R$ made into a Chen space with its
subspace smooth structure.  If $I$ were isomorphic to a Chen space in
the image of $\Ch^\flat$, say $I \cong \Ch^\flat X$, we would then
have $\Ch^\flat \So I = \Ch^\flat \So \Ch^\flat X = \Ch^\flat X \cong
I$.  However, he shows explicitly that $\Ch^\flat \So I$ is not
isomorphic to $I$; it is the unit interval equipped with a nonstandard
smooth structure.

The embedding $\Ch^\sharp$ lacks this defect, since $\Ch^\sharp \So I = I$.  
For an example of a Chen space not in the image of $\Ch^\sharp$, 
we can resort to $\Ch^\flat \So I$.  Suppose there were a diffeological 
space $X$ with $\Ch^\sharp X \cong \Ch^\flat \So I$.  Then we would 
have $\So \Ch^\sharp X \cong \So \Ch^\flat \So I$, hence $X \cong \So I$.
But this is a contradiction, since we know that $\Ch^\sharp$ applied 
$\So I$ gives $I$, which is not isomorphic to $\Ch^\flat \So I$.

Luckily, the embedding $\Ch^\sharp$ works well for manifolds
with corners.  In particular, if $\Diff_c$ is the category of 
manifolds with corners and smooth maps, we have a commutative triangle 
\[
\xymatrix{
& \Diffeological\Space \ar[dd]^{\Ch^\sharp}  \\
\Diff_c \ar[ur] \ar[dr] \\
&  \Chen\Space 
}
\]
where the diagonal arrows are the full and faithful functors described
earlier.  

\section{Convenient Properties of Smooth Spaces}
\label{convenient_properties}

Now we present some useful properties shared by Chen spaces and 
diffeological spaces.  Following Def.\ \ref{convention}, we call either 
kind of space a `smooth space', and we use $\cinf$ to denote either 
the category of Chen spaces or the category of diffeological spaces.  
Most of the proofs are straightforward diagram chases, but we defer 
all proofs to Section \ref{quasitopos}.

\begin{itemize}
\item Subspaces

Any subset $Y \subseteq X$ of a smooth space $X$ becomes a smooth space 
if we define $\varphi \maps D \to Y$ to be a plot in $Y$ if and only if 
its composite with the inclusion $i \maps Y \to X$ is a plot in $X$.  
We call this the {\bf subspace} smooth structure. 

It is easy to check that with this smooth structure, the inclusion $i
\maps Y \to X$ is smooth.  Moreover, it is a monomorphism in $\cinf$.  
Not every monomorphism is of this form. 
For example, the natural map from $\R$ with its discrete smooth structure
to $\R$ with its standard smooth structure is also a monomorphism.
In Prop.\ \ref{strong.mono.prop} we show that a smooth map $i \maps Y \to X$ 
comes from the inclusion of a subspace precisely when $i$ is a `strong' 
monomorphism (see Def.\ \ref{strong.mono}).  

The 2-element set $\{0,1\}$ with its indiscrete smooth structure is 
called the `weak subobject classifier' for smooth spaces, and denoted
$\Omega$.  The precise definition of a weak subobject classifier can
be found in Def.\ \ref{subobject.class}, but the idea is simple: 
for any smooth space $X$, subspaces of $X$ are in one-to-one correspondence 
with smooth maps from $X$ to $\Omega$.  In particular, any subspace 
$Y \subseteq X$ corresponds to the characteristic function 
$\chi_Y \maps X \to \Omega$ given by
\[        \chi_Y(x) = \left\{ \begin{array}{cl} 1 & x \in Y \\
                                        0 & x \notin Y \end{array} \right.
\]
In Prop.\ \ref{subobject.prop} we prove the existence of a weak
subobject classifiers in a more general context.

\item Quotient spaces

If $X$ is a smooth space and $\sim$ is any equivalence 
relation on $X$, the quotient space $Y = X/\sim$ becomes a smooth space 
if we define a plot in $Y$ to be any function
\[ \varphi \maps D \to Y \]
for which there exists an open cover $\lbrace D_i \rbrace$ of $D$ and a collection of plots in $X$
\[ \lbrace \varphi_i \maps D_i \to X \rbrace_{i \in I}. \]
such that the following diagram commutes:

\[
\xymatrix{
 D_i \ar[r]^{\varphi_i}\ar[d]_{\iota_i} & X \ar[d]^{p} \\
 D \ar[r]_{\varphi} & Y \\
}
\]

where $p \maps X \to Y$ is the function induced by the equivalence relation $\sim$ and 
$\iota_i \maps D_i \to D$ is the inclusion.  We call this the {\bf quotient space} smooth structure.

It is easy to check that with this smooth structure, the quotient map 
$p\maps X\to Y$ is smooth, and an epimorphism in $\cinf$.  Not every
epimorphism is of this form: for example, the natural map from $\R$ with
its standard smooth structure to $\R$ with its indiscrete smooth structure
is also an epimorphism.  In Prop.\ \ref{strong.epi.prop}, we show that a smooth 
map $p \maps X \to Y$ comes from taking a quotient space precisely when $p$ 
is a `strong' epimorphism (see Def.\ \ref{strong.epi}).

\item Terminal object

The one element set $1$ can be made into a smooth space in only one way,
namely by declaring every function from every domain to $1$ to be a plot.  
This smooth space is the terminal object of $\cinf$.

\item Initial object

The empty set $\emptyset$ can be made into a smooth space in only one way,
namely by declaring every function from every domain to $\emptyset$ 
to be a plot.  (Of course, such a function exists only for the empty 
domain.)  This smooth space is the initial object of $\cinf$.

\item Products

Given smooth spaces $X$ and $Y$, the product $X \times Y$ of their underlying
sets becomes a smooth space where $\varphi \maps D \to X \times Y$ is 
a plot if and only if its composites with the projections 
\[       p_X \maps X \times Y \to X, \qquad p_Y \maps X \times Y \to Y \]
are plots in $X$ and $Y$, respectively.  We call this the {\bf product
smooth structure} on $X \times Y$.
 
It is easy to check that with this smooth structure, $p_X$ and $p_Y$ are 
smooth.  Moreover, for any other smooth space $Q$ with smooth maps 
$f_X\maps Q\to X$ and $f_Y\maps Q\to Y$, there exists a unique smooth map 
$f \maps Q \to X \times Y$ such that this diagram commutes:
\[
\xymatrix{
  & Q\ar[d]^{f}\ar[dl]_{f_X}\ar[dr]^{f_Y} & \\
  X & X\times Y\ar[l]^{p_X}\ar[r]_{p_Y} & Y\\
}
\]
So, $X \times Y$ is indeed the product of $X$ and $Y$ in the category 
$\cinf$.

\item Coproducts

Given smooth spaces $X$ and $Y$, the disjoint union $X+Y$ of their 
underlying sets becomes a smooth space where $\varphi \maps D \to X+Y$ is a 
plot if and only if for each connected component $U$ of $D$, $\varphi|_U$ is 
either the composite of a plot in $X$ with
the inclusion $i_X \maps X \to X + Y$, or the composite of a plot in $Y$
with the inclusion $i_Y \maps Y \to X + Y$.  We call this the {\bf coproduct
smooth structure} on $X + Y$.  Note that for Chen spaces the domains of the 
plots are convex and thus have only one connected component.  So, in this
case, $\varphi$ is a plot in the disjoint union if and only if it factors 
through a plot in either $X$ or $Y$.

It is easy to check that with this smooth structure, $i_X$ and $i_Y$
are smooth.  Moreover, for any other smooth space $Q$ with smooth maps
$f_X \maps X \to Q$ and $f_Y \maps Y \to Q$, there exists a unique smooth
maps $f \maps X + Y \to Q$ such that\[
\xymatrix{
  & Q & \\
  X\ar[r]_{i_X}\ar[ur]^{f_X} & X + Y\ar[u]_{f} & Y\ar[l]^{i_Y}\ar[ul]_{f_Y}\\
}
\]
commutes.  So, $X+Y$ is indeed the coproduct of $X$ and $Y$ in the category
$\cinf$.
 
\item Equalizers

Given a pair $f,g\maps X\to Y$ of smooth maps between smooth spaces, the 
set
\[ Z = \{ x\in X\colon f(x)=g(x)\} \subset X \] 
becomes a smooth space with its subspace smooth structure, and the 
inclusion $i\maps Z\to X$ is the equalizer of $f$ and $g$:
\[
\xymatrix{
Z\ar[r]^{i} & X\ar@<1ex>[r]^f \ar@<-1ex>[r]_{g} & Y
}
\]
In other words, for any smooth space $Q$ with a smooth map 
$h_X \maps Q \to X$ making this diagram commute:
\[
\xymatrix{
Q\ar[r]^{h_X} & X\ar@<1ex>[r]^f \ar@<-1ex>[r]_{g} & Y
}
\]
there exists a unique smooth
map $h \maps Q \to Z$ such that 
\[
\xymatrix{
Z\ar[r]^{i} & X\ar@<1ex>[r]^f \ar@<-1ex>[r]_{g} & Y\\
Q\ar[u]^{h}\ar[ur]_{h_X} & &
}
\]
commutes.

\item Coequalizers

Given a pair $f,g\maps X\to Y$ of smooth maps between smooth spaces, 
the quotient 
\[    Z = Y /(f(x) \sim g(x))  \]
becomes a smooth space with its quotient smooth structure, and the 
quotient map $p \maps Y \to Z$ is the coequalizer of $f$ and $g$:
\[
\xymatrix{
X\ar@<1ex>[r]^f \ar@<-1ex>[r]_{g} & Y \ar[r]^{p} & Z
}
\]
The universal property here is dual to that of the equalizer: just
turn all the arrows around.

\item Pullbacks

Since $\cinf$ has products and equalizers, it also has pullbacks,
also known as `fibered products'.  Given a diagram of smooth maps
\[
\xymatrix{
    & X\ar[d]^f \\
   Y\ar[r]_g & Z \\
}
\]
we equip the set
\[  X \times_Z Y = \{ (x,y)\in X \times Y\mid f(x) = g(y)\}  \]
with its smooth structure as a subspace of the product $X \times Y$.  
The natural functions
\[       p_X \maps X \times_Z Y \to X, \qquad p_Y \maps X \times_Z Y \to Y \]
are then smooth, and it is easy to check this diagram is a pullback square:
\[
\xymatrix{
  X \times_Z Y \ar[d]_{p_Y} \ar[r]^{p_X}  & X\ar[d]^f \\
   Y\ar[r]_g & Z \\
}
\]
In other words, given any commutative square of smooth maps
like this:
\[
\xymatrix{
  Q \ar[d]_{h_Y} \ar[r]^{h_X}  & X\ar[d]^f \\
   Y\ar[r]_g & Z \\
}
\]
there exists a unique smooth map $h \maps Q \to X \times_Z Y$ making this
diagram commute: 
\[
\xymatrix{
  & Q\ar[dr]|-{{h}}\ar@/_/[ddr]_{h_X}\ar@/^/[rrd]^{h_Y} & & \\
  &  & X \times_Z Y\ar[r]^{p_X}\ar[d]_{p_Y} & X\ar[d]^f \\
  &  & Y\ar[r]_g & Z \\
}
\]

More generally, we can compute any limit of smooth spaces by taking
the limit of the underlying sets and endowing the result with a
suitable smooth structure.  This follows from Prop.\ \ref{limit.prop},
where we show that $\cinf$ has all small limits, together with the
fact that the forgetful functor from $\cinf$ to $\Set$ preserves
limits, since it is the right adjoint of the functor equipping any set
with its discrete smooth structure.

\item Pushouts

Since $\cinf$ has coproducts and coequalizers, it also has pushouts.
Given a diagram of smooth maps
\[
\xymatrix{
   Z\ar[r]^{f}\ar[d]_{g} & X \\
   Y &  \\
}
\]
we equip the set
\[     X +_Z Y = (X + Y)/(f(z) \sim g(z))  \]
with its smooth structure as a quotient space of the coproduct $X + Y$.
The natural functions
\[      i_X \maps X \to X +_Z Y, \qquad i_Y \maps Y \to X +_Z Y  \]
are then smooth, and in fact this diagram is a pushout square:
\[
\xymatrix{
  & Z\ar[r]^{f}\ar[d]_{g} & X\ar[d]^{i_X}& \\
  & Y\ar[r]_{i_Y}& X+_Z Y \\
}
\]
The universal property here is dual to that of the pullback, and
can also be easily checked.

More generally, we can compute any limit of smooth spaces by taking
the limit of the underlying sets and endowing the result with a
suitable smooth structure.  This follows from Prop.\
\ref{colimit.prop}, where we show that $\cinf$ has all small colimits,
together with the fact that the forgetful functor from $\cinf$ to
$\Set$ preserves colimits, since it is the left adjoint of the functor
equipping any set with its indiscrete smooth structure.

\item Mapping spaces

Given smooth spaces $X$ and $Y$, the set
\[     \cinf(X,Y) =\{ f\maps X\to Y\maps f\text{ is smooth}\} \]
becomes a smooth space where a function $\tilde{\varphi} \maps D\to 
\cinf(X,Y)$ is a plot if and only if the corresponding function 
$\varphi \maps D \times X \to Y$
given by
\[             \varphi(x,y) = \tilde{\varphi}(x)(y)  \]
is smooth.  With this smooth structure one can show that the
natural map
\[      
\begin{array}{rcl}
      \cinf(X \times Y, Z) &\to& \cinf(X, \cinf(Y, Z))  \\
                   f &\mapsto& \tilde{f} 
\end{array}
\]
\[              \tilde f(x)(y) = f(x,y)  \]
is smooth, with a smooth inverse.  So, we say the category $\cinf$ is 
cartesian closed (see Def.\ \ref{cart.closed}).

\item Parametrized mapping spaces

Mapping spaces are a special case of parametrized mapping spaces.
Fix a smooth space $B$ as our parameter space, or `base space'.  
Define a smooth space {\bf over $B$} to be a smooth space $Y$ 
equipped with a smooth map $p \maps Y \to B$ called the {\bf
projection}.  For each point $b \in B$, define the 
{\bf fiber} of $Y$ over $b$ be the set
\[          Y_b = \{y \in Y \colon p(y) = b \},  \]
made into a smooth space with its subspace smooth structure.  
We can think of a smooth space over $B$ as a primitive sort of
`bundle', without any requirement of local triviality.
Note that given smooth spaces $X$ and $Y$ over $B$, the pullback
or `fibered product' $X \times_B Y$ is again a smooth space over $B$.
In fact this is the product in a certain category of smooth spaces
over $B$.

If $Y$ and $Z$ are smooth spaces over $B$, let
\[       \cinf_B(Y,Z) = \bigsqcup_{b \in B} \cinf(Y_b, Z_b). \]
We make this into a smooth space, the {\bf parametrized mapping
space}, as follows.  First define a function
\[         p \maps \cinf_B(Y,Z) \to B \]
sending each element of $\cinf(Y_b, Z_b)$ to $b \in B$. 
This will be the projection for the parametrized mapping space.
Then, note that given any smooth space $X$ and any function
\[       \tilde{f} \maps X \to C_B^\infty(Y,Z)  \]
we get a function from $X$ to $B$, namely $p \tilde{f}$.  
If this is smooth we can define the pullback smooth space
$X \times_B Y$.  Then we can define a function
\[           f \maps X \times_B Y \to Z  \]
by
\[           f(x,y) = \tilde{f}(x) (y)  .\]
This allows us to define the smooth structure on $\cinf_B(Y,Z)$: 
for any domain $D$, a function
\[          \tilde{\varphi} \maps D \to C_B^\infty(Y,Z) \]
is a plot if and only if $p \tilde{\varphi}$ is smooth and
the corresponding function 
\[  \varphi \maps D \times_B Y \to Z  \]
is smooth.  With this smooth structure, one can check that 
$p \maps \cinf_B(Y,Z) \to B$ is smooth.  So, the parametrized
mapping space is again a smooth space over $B$.

The point of the parametrized mapping space is that given smooth
spaces $X,Y,Z$ over $B$, there is a natural isomorphism of smooth 
spaces
\[   \cinf_B(X \times_B Y , Z) \cong \cinf_B(X, \cinf_B(Y,Z)) . \]
We summarize this by saying that $\cinf$ is `locally' cartesian
closed (see Def.\ \ref{loc.cart.closed}).  In the case where $B$ is a 
point, this reduces to the fact that $\cinf$ is cartesian closed.

\end{itemize}

The following theorem subsumes most of the above remarks:

\begin{definition}
A {\bf quasitopos} is a locally cartesian closed category with finite 
colimits and a weak subobject classifier.
\end{definition}

\begin{theorem} 
The category of smooth spaces, $\cinf$, is a quasitopos with all (small)
limits and colimits.
\end{theorem}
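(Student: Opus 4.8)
The plan is to deduce the theorem from one abstract statement --- that the category of concrete sheaves on a concrete site is a quasitopos with all small limits and colimits --- applied to the sites presenting Chen and diffeological spaces. So the first step is to identify $\cinf$ with such a category of concrete sheaves. Take $C$ to be $\Chen$ in the Chen case and $\Diffeological$ in the diffeological case, each with the Grothendieck topology whose covering families are the open covers $\{i_j \maps D_j \to D\}$ appearing in plot axiom~2. Given a smooth space $X$, the assignment $D \mapsto \{\,\varphi \maps D \to X : \varphi \text{ a plot}\,\}$ is a presheaf on $C$ by axiom~1, a sheaf by axiom~2, and \emph{concrete} --- each plot is determined by its underlying function of points --- essentially by construction, with axiom~3 ensuring that the value at $\R^0$ recovers the underlying set of $X$. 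Conversely every concrete sheaf arises this way and smooth maps are exactly morphisms of concrete sheaves, so $\cinf \simeq \Conc(C)$.

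The second and main step is to prove the abstract statement for an arbitrary concrete site $(C,J)$. The engine is that the category $\Sh(C)$ of \emph{all} sheaves on $C$ is a Grothendieck topos, hence complete, cocomplete, locally cartesian closed, and equipped with a subobject classifier. I would then exhibit the concrete sheaves as the \emph{separated} objects for a Lawvere--Tierney topology $j$ on $\Sh(C)$: the local operator that forces a section over $c$ to be determined by its restrictions along the global points $1 \to c$. This operator exists precisely because the site is concrete, so that the global-points functor is faithful and covers are jointly surjective on points. One then invokes the classical result that, for any local operator on a topos, the full subcategory of separated objects is a quasitopos and is reflective via the separation functor. Local cartesian closedness and the weak subobject classifier follow at once.

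With this in hand the limits and colimits are immediate. Separatedness is an injectivity condition and so is preserved under limits; hence limits of concrete sheaves are computed as in $\Sh(C)$, which concretely is the limit of underlying sets carrying the coarsest compatible smooth structure --- consistent with the forgetful functor $\cinf \to \Set$ being a right adjoint (to the discrete-structure functor) and hence limit-preserving. Colimits are obtained by forming the colimit in $\Sh(C)$ and applying the separation reflector, which concretely is the colimit of underlying sets with the finest compatible smooth structure, matching the fact that the forgetful functor is also a left adjoint (to the indiscrete-structure functor). Finally, the general theory supplies the weak subobject classifier, which for smooth spaces is the two-point set $\{0,1\}$ with its indiscrete smooth structure --- the $\Omega$ of Section~\ref{convenient_properties}. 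Because separated objects form a quasitopos rather than a topos, this $\Omega$ classifies only \emph{strong} monomorphisms, that is, the subspace inclusions, which is why it is called weak.

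I expect the real obstacle to be local cartesian closedness: verifying that every slice $\Conc(C)/B$ is cartesian closed, not merely that $\Conc(C)$ itself is. Cartesian closedness does not pass to a reflective subcategory for free, so one must check that the exponential of two concrete sheaves --- formed in $\Sh(C)$ and then separated --- still satisfies the exponential adjunction \emph{within} concrete sheaves, and that this survives slicing over each base $B$. Unwound, this is exactly the claim that the parametrized mapping space $\cinf_B(Y,Z)$ carries the smooth structure for which plots into it correspond to smooth maps out of $\,\cdot\, \times_B Y$, and that it represents $\cinf_B(X \times_B Y, Z)$. Getting the interaction of the separation reflector with slicing and with finite limits correct is where the genuine work lies; the faithfulness and joint-surjectivity axioms of a concrete site are precisely what keep the reflector well behaved enough for this to go through.
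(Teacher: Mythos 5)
Your proposal is correct in outline, and its first step --- identifying $\cinf$ with the concrete sheaves on the concrete sites $\Chen$ and $\Diffeological$ --- is exactly what the paper does in Props.~\ref{chen.prop} and \ref{diffeological.prop}. Where you genuinely diverge is in how the abstract statement (Thm.~\ref{quasitopos.thm}) is proved. The paper verifies each piece of quasitopos structure by hand: limits pointwise (Prop.~\ref{limit.prop}), the weak subobject classifier by exhibiting $\Omega(D)=2^{\u{D}}$ explicitly (Props.~\ref{omega}, \ref{subobject.prop}), local cartesian closedness by constructing the parametrized mapping space $\cald_B(X,Y)$ plot by plot and checking the sheaf condition and the exponential adjunction directly (Prop.~\ref{loc.cart.prop}), and colimits via an explicit concretization functor followed by one application of the plus construction (Lemma~\ref{plus.construction}, Prop.~\ref{colimit.prop}). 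You instead realize the concrete sheaves as the separated objects for the Lawvere--Tierney topology on $\Sh(\D)$ induced by adjoining the families of global points $1\to D$ to the given coverage, and then cite the classical theorem that the separated objects for a local operator on a topos form a reflective quasitopos. This is precisely the Penon--Dubuc route that the paper acknowledges as its source of ideas but deliberately does not carry out; it is shorter and makes the structure conceptually transparent (in particular it explains for free why $\Omega$ classifies only strong monomorphisms), at the price of importing a substantial theorem about separated objects and of hiding the explicit descriptions of subspaces, quotients, and mapping spaces that are the paper's real payoff for differential geometers. The one point you should make explicit is the identification itself: the point-families do satisfy the coverage axiom (pull the points of $D$ back to the points of $C$), separatedness need only be checked on a generating coverage, and a matching family for the sieve generated by the points of $D$ is exactly a function $\u{D}\to X(1)$, so that $j$-separatedness of a $J$-sheaf is literally the concreteness condition of Def.~\ref{concrete_sheaf}; the faithfulness and joint-surjectivity axioms of Def.~\ref{concrete_site} are what make the representables concrete and the two coverages compatible. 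Your remarks on limits, colimits, and the location of the difficulty in local cartesian closedness all match the paper's Props.~\ref{limit.prop}, \ref{colimit.prop}, and \ref{loc.cart.prop}.
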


\begin{proof}
In Thm.\ \ref{quasitopos.thm} we show this holds for any 
category of `generalized spaces', that is, any category of concrete
sheaves on a concrete site.    In Prop.\ \ref{chen.prop}
we prove that $\Chen\Space$ is equivalent to a category of this 
kind, and in Prop.\ \ref{diffeological.prop} we show the same
for $\Diffeological\Space$.
\end{proof}

\section{Smooth Spaces as Generalized Spaces}
\label{generalized_spaces}

The concept of a `generalized space' was developed in the context of 
quasitopos theory by Antoine \cite{Antoine}, Penon 
\cite{Penon:1973, Penon:1977} and Dubuc \cite{Dubuc:1979,Dubuc2:2006}.  
Generalized spaces form a natural framework for studying Chen spaces, 
diffeological spaces, and even simplicial complexes.  For us, a category 
of generalized spaces will be a category of `concrete sheaves' over a 
`concrete site'.  For a self-contained treatment, we start by
explaining some basic notions concerning sheaves and sites.  We motivate
all these notions with the example of Chen spaces, and 
in Prop.\ \ref{chen.prop}, we prove that Chen spaces are
concrete sheaves on a concrete site.  We also prove
similar results for diffeological spaces and simplicial complexes.

We can define sheaves on a category as soon as we have a 
good notion of when a family of morphisms $f \maps D_i \to D$ `covers'
an object $D$.  For this, our category should be what is called a `site'.  
Usually a site is defined to be a category equipped with a 
`Grothendieck topology'.  However, as emphasized by Johnstone 
\cite{Johnstone2}, we can get away with less: it is enough to use 
a `Grothendieck pretopology', or `coverage'.  The difference is not very 
great, since every coverage on a category determines a
Grothendieck topology with the same sheaves.  Coverages 
are simpler to define, and for our limited purposes they are easier to 
work with.   So, we shall take a site to be a category equipped with a 
coverage.  Two different coverages may determine the same
Grothendieck topology, but knowledgeable readers can check that
everything we do depends only on the Grothendieck topology.

\begin{definition}
A {\bf family} is a collection of morphisms with common codomain.
\end{definition}

\begin{definition}
A {\bf coverage} on a category $\D$ is a function 
assigning to each object $D\in\D$ a collection $\mathcal{J}(D)$ of 
families $(f_i\maps D_i\to D|i\in I)$ called {\bf covering
families}, with the following property:
\begin{itemize}
\item 
Given a covering family $(f_i\maps D_i\to D|i\in I)$ and a
morphism $g\maps C \to D$, there 
exists a covering family $(h_j\maps C_j\to C|j\in J)$ such 
that each morphism $gh_j$ factors through some $f_i$.
\end{itemize}
\end{definition}

\begin{definition}
A {\bf site} is a category equipped with a coverage.  We
call the objects of a site {\bf domains}.
\end{definition}

In Lemma \ref{chen.lem} we describe a coverage
on the category $\Chen$, whose objects are convex sets
and whose morphisms are smooth functions.  For this coverage, 
a covering family is just an open
cover in the usual sense.  This makes $\Chen$ into a site, and 
Chen spaces will be `concrete sheaves' on this site.  
To understand how this works, let us quickly review sheaves and 
then explain the concept of `concreteness'.

\begin{definition}
A {\bf presheaf} $X$ on a category $\D$ is a functor
$X \maps \D^\op \to \Set$.  For any object $D \in \D$, we call 
the elements of $X(D)$ {\bf plots in $X$ with domain $D$}.
\end{definition}

\noindent
Usually the elements of $X(D)$ are called `sections of $X$ over
$D$'.  However, given a Chen space $X$ there is a presheaf on 
$\Chen$ assigning to any convex set $D$ the set $X(D)$ of all 
plots $\varphi \maps D \to X$.  So, it will guide our intuition 
to quite generally call an object $D \in \D$ a `domain' and 
elements of $X(D)$ `plots'.

Axiom 1 in the definition of a Chen space is what gives us a
contravariant functor from $\Chen$ to $\Set$: it says that given 
any morphism $f \maps C \to D$ in $\Chen$, we get a function
\[          X(f) \maps X(D) \to X(C)  \]
sending any plot $\varphi \maps D \to X$ to the plot $\varphi f \maps
C \to X$.  Axiom 2 says that the resulting presheaf on $\Chen$
is actually a sheaf:

\begin{definition}
Given a covering family $(f_i\maps D_i\to D|i\in I)$ in $\D$ and a presheaf 
$X\maps\D^\op\to\Set$, a collection of plots 
$\{\varphi_i\in X(D_i)|i\in I\}$ is called {\bf compatible} if whenever 
$g\maps C\to D_i$ and $h\maps C\to D_j$ make this diagram commute:
\[
\xymatrix{
C\ar[r]^{h}\ar[d]_{g} & D_j\ar[d]^{f_j}\\
D_i\ar[r]_{f_i} & D
}
\]
then $X(g)(\varphi_i)=X(h)(\varphi_j)$.
\end{definition}

\begin{definition} \label{sheaf}
Given a site $\D$, a presheaf $X\maps\D^\op\to\Set$ 
is a {\bf sheaf} if it satisfies the following condition:
\begin{itemize}
\item Given a covering family $(f_i\maps D_i\to D|i\in I)$ 
and a compatible collection of plots $\{\varphi_i\in X(D_i)|i\in I\}$, 
then there exists a unique plot $\varphi\in X(D)$ such that 
$X(f_i)(\varphi) = \varphi_i$ for each $i\in I$.
\end{itemize}
\end{definition}

On any category, there is a special class of presheaves called the
`representable' ones:

\begin{definition}
A presheaf $X\maps\D^\op\to\Set$ is called {\bf representable} if it
is naturally isomorphic to $\hom(-,D)\maps\D^\op\to\Set$ for some
$D\in\D$.  
\end{definition}

\noindent
The site $\Chen$ is `subcanonical':

\begin{definition}
A site is {\bf subcanonical} if every representable presheaf on
this site is a sheaf.  
\end{definition}

\noindent
We shall include this property in the definition of `concrete site'.
But there is a much more important property that we shall also
require.  A Chen space $X$ gives a special kind of sheaf on the site
$\Chen$: a `concrete' sheaf, meaning roughly that for any $D \in
\Chen$, elements of $X(D)$ are certain functions from the underlying
set of $D$ to some fixed set.  Of course, this notion relies on the
fact that $D$ has an underlying set!  The following definition ensures
that this is the case for any object $D$ in a concrete site:

\begin{definition}\label{concrete_site}
A {\bf concrete site} $\D$ is a subcanonical site with a terminal 
object $1$ satisfying the following conditions:
\begin{itemize}
\item The functor $\hom(1,-)\maps\D\to\Set$ is faithful.
\item For each covering family $(f_i\maps D_i\to D|i\in I)$, the family 
of functions $(\hom(1,f_i)\maps\hom(1,D_i)\to\hom(1,D)|i\in I)$ is 
{\bf jointly surjective}, meaning that the union of their images
is all of $\hom(1,D)$.
\end{itemize}
\end{definition}

\noindent
Quite generally, any object $D$ in a category $\D$ with a 
terminal object has an underlying set $\hom(1,D)$, often
called its set of `points'.  The requirement 
that $\hom(1,-)$ be faithful says that two morphisms 
$f,g \maps C \to D$ in $\D$ are equal when they induce
the same functions from points of $C$ to points of $D$.  
In other words: objects have enough points to distinguish 
morphisms.  In this situation we can think of objects of 
$\D$ as sets equipped with extra structure.  The second 
condition above then says that the underlying
family of functions of a covering family is itself a
`covering', in the sense of being jointly surjective.

Henceforth, we let $\D$ stand for a concrete site.  
Now we turn to the notion of `concrete sheaf'.  
There is a way to extract a set from a sheaf on a concrete site.  
Namely, a sheaf $X \maps \D^\op \to \Set$   
gives a set $X(1)$.  In the case of a sheaf coming from a Chen 
space, this is the set of one-point plots $\varphi \maps 1 \to X$.  
Axiom 3 implies that it is the underlying set of the Chen space.  
Furthermore, for any sheaf $X$ on a concrete site, there is a way 
to turn a plot $\varphi \in X(D)$ into a function $\u{\varphi}$ 
from $\hom(1,D)$ to $X(1)$.   To do this, set
\[          \u{\varphi}(d) = X(d)(\varphi)  .  \]
A simple computation shows that for the sheaf coming from a Chen 
space, this process turns any plot into its underlying function.  
(See Prop.\ \ref{chen.prop} for details.)  In this example, we 
lose no information when passing from $\varphi$ to the
function $\u{\varphi}$: distinct plots have distinct underlying 
functions.  The notion of `concrete sheaf' makes this idea 
precise quite generally:

\begin{definition} \label{concrete_sheaf}
Given a concrete site $\D$, we say a sheaf $X \maps \D^\op \to \Set$
is {\bf concrete} if for every object $D \in \D$, the
function sending plots $\varphi \in X(D)$ to functions
$\u{\varphi} \maps \hom(1,D) \to X(1)$ is one-to-one.
\end{definition}

We can think of concrete sheaves as `generalized spaces', since 
they generalize Chen spaces and diffeological spaces. 
Every concrete site gives a category of generalized spaces:

\begin{definition} \label{dspace}
Given a concrete site $\D$, a {\bf generalized space} or {\bf $\D$ space} 
is a concrete sheaf $X\maps\D^\op \to\Set$.  A {\bf map} between 
$\D$ spaces $X,Y \maps \D^\op \to \Set$ is a natural transformation 
$F\maps X \To Y$.   We define $\mathbf{\D\mathrm{Space}}$ to be the 
category of $\D$ spaces and maps between these.
\end{definition}

Now let us give some examples:

\begin{lemma}\label{chen.lem} 
Let $\Chen$ be the category whose objects are convex sets and
whose morphisms are smooth functions.  The category
$\Chen$ has a subcanonical coverage where $(i_j\maps C_j\to C|j\in J)$ 
is a covering family if and only if the convex sets $C_j \subseteq
C$ form an open covering of the convex
set $C \subseteq \R^n$ with its usual subspace topology, and 
$i_j \maps C_j \to C$ are the inclusions.  
\end{lemma}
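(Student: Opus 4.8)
The plan is to verify two things: first, that the proposed assignment of covering families satisfies the single axiom in the definition of a coverage; and second, that the resulting site is subcanonical, i.e.\ that each representable presheaf $\hom(-,D)$ is a sheaf.

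For the coverage axiom, suppose $(i_j \maps C_j \to C \mid j \in J)$ is a covering family, so the convex open sets $C_j$ cover $C$, and let $g \maps B \to C$ be any smooth map of convex sets. The naive guess, pulling the cover back to $(g^{-1}(C_j))$, fails at once: the preimage of a convex set under a smooth map need not be convex. The observation that repairs this is that any open subset of $\R^n$ can be covered by open balls, which are convex. So for each point $b \in B$, choose $j$ with $g(b) \in C_j$; since $g$ is continuous and $g^{-1}(C_j)$ is open in $B$, there is an open ball $V_b \subseteq \R^n$ about $b$ with $V_b \cap B \subseteq g^{-1}(C_j)$. Each $V_b \cap B$ is convex, being an intersection of convex sets, and, as a nonempty open subset of the convex set $B$, it has nonempty interior, so it is a legitimate object of $\Chen$ in the sense of Def.\ \ref{convex.set}. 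The family $(V_b \cap B \to B)$ is then a covering family, and by construction $g$ carries $V_b \cap B$ into $C_j$, so the restriction of $g$ to $V_b \cap B$ factors through $i_j$. This is exactly the factorization the axiom demands.

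For subcanonicity, fix a convex set $D$ and consider the representable presheaf $X = \hom(-,D)$. Given a covering family $(i_j \maps C_j \to C)$ and a compatible family of smooth maps $\varphi_j \maps C_j \to D$, I first unwind the compatibility condition: since the $i_j$ are set-theoretic inclusions, for any point $c \in C_i \cap C_j$ one can test compatibility against the inclusion into $C$ of a small convex set $A$ of the form (open ball about $c$) $\cap\, C$ lying inside $C_i \cap C_j$. Such an $A$ is convex and has nonempty interior, because the interior of $C$ is dense in $C$, so this nonempty relatively open subset meets it. Compatibility then forces $\varphi_i$ and $\varphi_j$ to agree on $C_i \cap C_j$. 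Hence the formula $\varphi(c) = \varphi_j(c)$, for any $j$ with $c \in C_j$, is well-defined; and since the $C_j$ cover $C$ as a set, it is the only function satisfying $\varphi \circ i_j = \varphi_j$, which gives uniqueness immediately.

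It remains to check that the glued function $\varphi \maps C \to D$ is smooth, and here I use that smoothness is a local condition: the $C_j$ form an open cover of $C$, and $\varphi$ agrees with the smooth map $\varphi_j$ on each $C_j$, so $\varphi$ has continuous derivatives of all orders at every point of $C$. This completes the proof that $X$ is a sheaf. I expect the only genuinely delicate point throughout to be the bookkeeping around convexity and nonempty interior---confirming at each refinement step that the sets one produces, namely balls intersected with $B$ or $C$, really are convex sets with nonempty interior in the sense of Def.\ \ref{convex.set}---rather than any deep categorical or analytic difficulty.
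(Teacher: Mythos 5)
Your proof is correct and follows essentially the same route as the paper: the coverage axiom is verified by pulling back the open cover along $g$ and refining the (possibly non-convex) preimages by convex open balls, exactly as in the paper's proof. For subcanonicity the paper simply declares the claim clear because the covering families are ordinary open covers; your gluing argument --- agreement on overlaps via compatibility, uniqueness from the sets covering $C$, and locality of smoothness --- just supplies the details the paper omits.
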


\begin{proof}
Given such a covering family $(i_j\maps C_j\to C|j \in J)$ 
and $g\maps D\to C$ in $\Chen$, then 
$\{ g^{-1}(i_j(C_j))\}$ is an open cover of $D$ which 
factors through the family $i_j$ as functions on sets.  We can refine 
this cover by convex open balls to obtain a covering family of $D$ which 
factors through the family $i_j$ in $\Chen$.  Since the covers are open
covers in the usual sense, it is clear that the site is subcanonical.
\end{proof}

We henceforth consider $\Chen$ as a site with the above coverage.  
Since any 1-point convex set is a terminal object, $\Chen$ is a 
concrete site.  This allows us to define a kind of generalized space 
called a `$\Chen$ space' following Def.\ \ref{dspace}.  

\begin{proposition} \label{chen.prop}
A Chen space is the same as a $\Chen$ space.  More precisely, 
the category of Chen spaces and smooth maps is equivalent to the 
category $\Chen\Space$.
\end{proposition}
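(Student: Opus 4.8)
The plan is to exhibit explicit functors in both directions between the category of Chen spaces and $\Chen\Space$, and then check that they are mutually inverse up to natural isomorphism. In one direction, given a Chen space $X$, I would define a presheaf $\hat X\maps\Chen^\op\to\Set$ by letting $\hat X(C)$ be the set of plots $\varphi\maps C\to X$. Axiom 1 in the definition of a Chen space is exactly the statement that precomposition makes $\hat X$ a contravariant functor, so $\hat X$ is a presheaf; axiom 2 is precisely the sheaf condition of Def.~\ref{sheaf} relative to the coverage of Lemma~\ref{chen.lem}, since a covering family there is an open cover by inclusions and a compatible family of plots is one whose members agree on overlaps. Concreteness (Def.~\ref{concrete_sheaf}) will be automatic: identifying $\hom(1,C)$ with the points of $C$ and $\hat X(1)$ with the underlying set of $X$ (this last identification is where axiom 3 enters), the associated function $\u\varphi$ is literally the underlying function of the plot $\varphi$, and distinct plots are distinct as functions. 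On morphisms I would send a smooth map $f\maps X\to Y$ to the natural transformation given by postcomposition, $\varphi\mapsto f\varphi$, which lands in $\hat Y$ precisely because $f$ is smooth.

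In the other direction, given a $\Chen$ space $Y$, i.e.\ a concrete sheaf, I would build a Chen space whose underlying set is $Y(1)$ and whose plots with domain $C$ are exactly the underlying functions $\u\psi\maps \hom(1,C)\to Y(1)$ of sections $\psi\in Y(C)$; concreteness guarantees that such a $\psi$ is unique when it exists, so this is well defined. I would then verify the three Chen-space axioms: axiom 1 follows from functoriality of $Y$ together with the naturality identity $\u{Y(f)(\psi)}=\u\psi\circ f$; axiom 3 follows because the one-point convex set is the terminal object $1$, so its sections are just the elements of $Y(1)$ and yield every function from a point; and axiom 2 follows from the sheaf condition. This last verification is where I expect the real work to lie: starting from a function $\varphi\maps C\to Y(1)$ whose restriction to each member $C_j$ of an open cover is a plot, say $\varphi i_j=\u{\psi_j}$ with $\psi_j\in Y(C_j)$, I must show the $\psi_j$ form a compatible family so that they glue. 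Compatibility is a statement about equality of sections over common refinements, not merely about underlying functions, and the key trick is to use concreteness in reverse: the two sections being compared have equal underlying functions (both restrictions of $\varphi$), hence are equal in $Y$. Gluing then produces the unique $\psi\in Y(C)$ with $\u\psi=\varphi$, making $\varphi$ a plot.

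Finally I would assemble these into an equivalence. The composite sending a Chen space to its sheaf and back returns the same set of plots, and the composite on sheaves is naturally isomorphic to the identity by the same concreteness argument, so both round-trips are the identity up to natural isomorphism. For morphisms, the crucial point is that a natural transformation $F\maps\hat X\To\hat Y$ is completely determined by its component $F_1\maps X(1)\to Y(1)$ on underlying sets: naturality forces $\u{F_C(\varphi)}=F_1\circ\u\varphi$, and concreteness then pins down $F_C(\varphi)$. This shows $F_1$ is smooth and that $F\mapsto F_1$ is inverse to the postcomposition construction, giving a bijection on hom-sets. Together with essential surjectivity (every concrete sheaf arises from a Chen space, by the round-trip computation) this yields the claimed equivalence of categories. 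The main obstacle throughout is bookkeeping the passage between sections and their underlying functions, and in particular establishing compatibility in the gluing step; everything else is a routine diagram chase once the dictionary $\hom(1,C)\leftrightarrow\text{points of }C$, $\;Y(1)\leftrightarrow\text{underlying set}$ is fixed.
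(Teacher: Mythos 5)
Your proposal is correct and follows essentially the same route as the paper: the same pair of functors (plots-as-sections one way, underlying functions of sections the other), the same use of axioms 1--3 to match the presheaf, sheaf, and concreteness conditions, and the same concreteness-based round-trip argument. The only difference is that you spell out the compatibility check in the gluing step for axiom 2 (two sections with equal underlying functions are equal by concreteness), which the paper leaves implicit; this is a welcome clarification, not a divergence.
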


\begin{proof} Let $\cinf$ stand for the category of Chen spaces
and smooth maps. We begin by constructing functors from $\cinf$ to 
$\Chen\Space$ and back.  To reduce confusion, just for now we use 
italics for objects and morphisms in $\cinf$, and boldface 
for those in $\Chen\Space$.

First, given $X \in \cinf$, we construct a concrete sheaf $\X$ on 
$\Chen$.  For each convex set $C$, we define $\X(C)$ to be 
the set of all plots $\varphi\maps C\to X$, and given a smooth function 
$f \maps C' \to C$ between convex sets, we define $\X(f) \maps
\X(C) \to \X(C')$ as follows:
\[        \X(f) \varphi = \varphi f .\]
Axiom 1 in Chen's definition guarantees that $\varphi f$ lies
in $\X(C')$, and it is easy to check that $\X$ is a presheaf.
Axiom 2 ensures that this presheaf is a sheaf.  

To check that $\X$ is concrete, note first that
axiom 3 gives a bijection between underlying set of $X$ and the set 
$\X(1)$, sending any point $x \in X$ to the one-point plot 
whose image is $x$.  Then, let $\varphi \in \X(C)$ and compute 
$\u{\varphi} \maps \hom(1,C) \to X(1) \cong X$:
\[     \u{\varphi}(c) = X(c)(\varphi) = \varphi(c)  \]
where at the last step we identify the smooth function $c \in \hom(1,C)$ 
with the one point in its image.  So, $\u{\varphi}$ is the underlying
function of the plot $\varphi$.  It follows that the map sending $\varphi$
to $\u{\varphi}$ is one-to-one, so $\X$ is concrete. 

Next, given a smooth map $f \maps X \to Y$ between Chen spaces,
we construct a natural transformation $\f \maps X \to \Y$ between the 
corresponding sheaves.  For this, we define
\[                \f_C \maps \X(C) \to Y(C) \]
by
\[                \f_C (\varphi) = f \varphi \]
To show that $\f$ is natural, we need the following square to commute
for any smooth function $g \maps C' \to C$:
\[
\xymatrix{
\X(C)\ar[r]^{\f_C}\ar[d]_{\X(g)} & \Y(C)\ar[d]^{\Y(g)}\\
\X(C')\ar[r]_{\f_{C'}} & \Y(C')
}
\]
This just says that $(f \varphi) g = f (\varphi g)$.

We leave it to the reader to verify that this construction defines a functor
from $\cinf$ to $\Chen\Space$.

To construct a functor in the other direction, we must first construct
a Chen space $X$ from any concrete sheaf $\X$ on $\Chen$.  For this we take 
$X = \X(1)$ as the underlying set of the Chen space, and take as plots 
in $X$ with domain $C$ all functions of the form $\u{\varphi}$ where 
$\varphi \in \X(C)$.  
Axiom 1 in the definition of Chen space follows from the fact that $\X$ is a 
presheaf.  Axiom 2 follows from the fact that $\X$ is a sheaf.  Axiom 3 
follows from the fact that $X = \X(1)$.  Next, we must construct 
a function $f \maps X \to Y$ from a natural transformation $\f \maps \X \to \Y$
between concrete sheaves.  For this we set
\[       f = \f_1 \maps \X(1) \to \Y(1)  .\]
Again, we leave it to the reader to check that this construction defines
a functor.

Finally, we must check that the composite of these functors in either order is 
naturally isomorphic to the identity.   This is straightforward in the
case where we turn a Chen space $X \in \cinf$ into a concrete sheaf $\X$ and 
back into a Chen space.  When we turn a concrete sheaf $\X$ into a Chen 
space $X$ and back into a concrete sheaf $\X'$, we have
\[        \X'(C) = \{ \u{\varphi} \maps C \to X(1) \}  \]
but the latter is naturally isomorphic to $X(C)$ via the function
\[      \begin{array}{rcl}
      X(C) &\to& X'(C)  \\
      \varphi & \mapsto & \u{\varphi} 
\end{array}
\]
thanks to the fact that $\X$ is concrete.
\end{proof}

Diffeological spaces work similarly:

\begin{lemma}\label{diffeological.lem} 
Let $\Diffeological$ be the category whose objects are open subsets of 
$\R^n$ and whose morphisms are smooth maps.  The category $\Diffeological$ 
has a subcanonical coverage where $(i_j\maps U_j\to U|j\in J)$ 
is a covering family if and only if the open sets $U_j \subseteq
U$ form an open covering of the open set $U \subseteq \R^n$, 
and $i_j \maps U_j \to U$ are the inclusions.  
\end{lemma}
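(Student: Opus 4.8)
The plan is to mirror the proof of Lemma~\ref{chen.lem}, exploiting the fact that in $\Diffeological$ the preimage of an open set under a smooth map is automatically open. This removes the one extra step---refinement by convex open balls---that was needed in the Chen case, so the argument is actually simpler here. There are two things to verify: that the prescribed families genuinely form a coverage, and that this coverage is subcanonical.

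For the coverage axiom, I would start with a covering family $(i_j\maps U_j\to U\mid j\in J)$, so the $U_j$ form an open cover of $U$, together with an arbitrary morphism $g\maps V\to U$ in $\Diffeological$. The obvious candidate for a refinement is $V_j = g^{-1}(U_j)$. Since $g$ is smooth, hence continuous, each $V_j$ is open in $V$ and so is again an object of $\Diffeological$; and since the $U_j$ cover $U\supseteq g(V)$, the $V_j$ cover $V$. Thus $(V_j\hookrightarrow V\mid j\in J)$ is a covering family. Finally, by construction $g$ maps $V_j$ into $U_j$, so the restriction $g|_{V_j}$ is a smooth map $V_j\to U_j$, and the composite of the inclusion $V_j\hookrightarrow V$ with $g$ factors through $i_j$ via this restriction. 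That is exactly the factorization required by the coverage axiom, and unlike the convex case no further refinement is needed.

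To prove subcanonicality, I would fix an object $W$ and show the representable presheaf $\hom(-,W)$ is a sheaf. Given an open cover $(i_j\maps U_j\to U)$ and a compatible family of smooth maps $\{\varphi_j\maps U_j\to W\}$, the first task is to unwind the abstract compatibility condition. The key observation is that each pairwise intersection $U_i\cap U_j$ is itself open, hence an object of $\Diffeological$; testing compatibility against $U_i\cap U_j$ with its two inclusions shows that compatibility is equivalent to the familiar condition that $\varphi_i$ and $\varphi_j$ agree on $U_i\cap U_j$. Granting this, the $\varphi_j$ patch together to a unique set-theoretic function $\varphi\maps U\to W$; this $\varphi$ is smooth because smoothness is a local property and $\varphi|_{U_j}=\varphi_j$, and it is unique since the $U_j$ cover $U$. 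This is precisely the sheaf condition of Def.~\ref{sheaf}.

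The argument is almost entirely routine, so there is no serious obstacle. The one point deserving care is the reduction of the abstract compatibility condition---quantified over all test objects $C$ and all pairs $g,h\maps C\to U_i, U_j$ with $i_i g = i_j h$---to concrete agreement on overlaps. This reduction works precisely because intersections of open sets remain open and thus lie in the site; I would make it explicit by noting that any such $g,h$ have common image landing in $U_i\cap U_j$, so both factor through the inclusion $C\to U_i\cap U_j$, closing the gap.
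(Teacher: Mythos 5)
Your proposal is correct and follows essentially the same route as the paper, which simply notes that the argument for $\Chen$ (pull back the cover along $g$, then check the sheaf condition for representables via the usual gluing of smooth functions on an open cover) goes through with the simplification that the preimages $g^{-1}(U_j)$ are already objects of the site, so no refinement by convex balls is needed. Your extra care in reducing the abstract compatibility condition to agreement on overlaps $U_i\cap U_j$ is a correct elaboration of the step the paper dismisses as clear.
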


\begin{proof}
The proof is a simpler version of the proof for $\Chen$, since we are 
considering open but not necessarily convex sets.
\end{proof}

We henceforth treat $\Diffeological$ as a site with this coverage.  
The one-point open subset of $\R^0$ is a terminal object
for $\Diffeological$, so this is a concrete site.  As before, we have:

\begin{proposition} \label{diffeological.prop}
A diffeological space is the same as a $\Diffeological$ space.  
More precisely, the category of diffeological spaces is equivalent 
to the category \hfill \break $\Diffeological\Space$.
\end{proposition}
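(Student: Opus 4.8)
The plan is to run the argument of Proposition \ref{chen.prop} essentially verbatim, with open sets in place of convex sets at every step. By Lemma \ref{diffeological.lem} the category $\Diffeological$ carries a subcanonical coverage, and since the one-point subset of $\R^0$ is a terminal object, $\Diffeological$ is a concrete site; hence Definitions \ref{concrete_sheaf} and \ref{dspace} apply and the category $\Diffeological\Space$ makes sense. Writing $\cinf$ for the category of diffeological spaces and smooth maps, I would first build a functor $\cinf \to \Diffeological\Space$: given a diffeological space $X$, set $\X(U)$ to be the set of plots $\varphi \maps U \to X$, and for a smooth $f \maps U' \to U$ let $\X(f)\varphi = \varphi f$. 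Axiom 1 in the definition of a diffeological space guarantees $\varphi f$ is again a plot, so $\X$ is a presheaf; Axiom 2 is precisely the sheaf condition for the coverage of Lemma \ref{diffeological.lem}, so $\X$ is a sheaf.

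To see that $\X$ is concrete, I would use Axiom 3 to identify the underlying set of $X$ with $\X(1)$, and then compute, for $\varphi \in \X(U)$ and $u \in \hom(1,U)$, that $\u{\varphi}(u) = \X(u)(\varphi) = \varphi(u)$, identifying the point $u$ of $U$ with the corresponding map $1 \to U$. Thus $\u{\varphi}$ is the underlying function of $\varphi$, distinct plots have distinct underlying functions, and concreteness in the sense of Definition \ref{concrete_sheaf} holds. On morphisms, a smooth map $f \maps X \to Y$ yields the natural transformation with components $\f_U(\varphi) = f\varphi$; naturality reduces to the identity $(f\varphi)g = f(\varphi g)$, exactly as in the Chen case.

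For the reverse functor I would send a concrete sheaf $\X$ to the diffeological space with underlying set $\X(1)$ whose plots with domain $U$ are the functions $\u{\varphi}$ for $\varphi \in \X(U)$; Axioms 1--3 follow respectively from $\X$ being a presheaf, a sheaf, and from $X = \X(1)$. A natural transformation $\f \maps \X \to \Y$ is sent to $f = \f_1 \maps \X(1) \to \Y(1)$. Finally I would check that the two composites are naturally isomorphic to the identity: the round trip starting from a diffeological space is immediate, while starting from a concrete sheaf $\X$, passing to the diffeological space and back produces $\X'(U) = \{\u{\varphi} \maps U \to \X(1)\}$, which is naturally isomorphic to $\X(U)$ via $\varphi \mapsto \u{\varphi}$ precisely because $\X$ is concrete.

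Since none of these steps uses convexity --- indeed smoothness of maps between open sets is just the ordinary notion, so the boundary subtleties of the Chen case disappear --- I expect no real obstacle; the argument is a strictly simpler version of the proof of Proposition \ref{chen.prop}. The only point deserving a moment's attention is confirming that the coverage of Lemma \ref{diffeological.lem} produces exactly the sheaf condition encoded by Axiom 2 for diffeological spaces, and this is immediate because the covering families are ordinary open covers.
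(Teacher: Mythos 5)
Your proposal is correct and takes exactly the approach the paper intends: the paper's own proof of Prop.\ \ref{diffeological.prop} simply states that the argument for Chen spaces (Prop.\ \ref{chen.prop}) applies verbatim, and you have carried out that same argument with open sets in place of convex sets. No issues.
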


\begin{proof}
The proof of the corresponding statement for Chen spaces applies here as 
well.
\end{proof}

An example of a very different flavor is the category of simplicial
complexes:

\begin{definition}
An {\bf (abstract) simplicial complex} is a set $X$ together with
a family $K$ of nonempty finite subsets of $X$ such that:
\begin{enumerate}
\item Every singleton lies in $K$.
\item If $S \in K$ and $T \subseteq S$ then $T\in K$. 
\end{enumerate}
A {\bf map} of simplicial complexes $f\maps (X,K)\to (Y,L)$  is a function 
$f\maps X \to Y$ such that $S\in K$ implies $f(S)\in L$.
\end{definition}
\noindent
We can geometrically realize any simplicial complex $(X,K)$ by turning 
each $n$-element set $S \in K$ into a geometrical $(n-1)$-simplex.  Then
axiom 1 above says that any point of $X$ corresponds to a 0-simplex, while
axiom 2 says that any face of a simplex is again a simplex.  

To view the category of simplicial complexes as a category
of generalized spaces, we use the following site:

\begin{lemma} \label{simplicial.lem}
Let $\Simplicial$ be the category with nonempty finite sets as objects
and functions as morphisms.  There is a subcanonical coverage on $\Simplicial$
where for each object $D$ in $\Simplicial$ there is exactly one
covering family, consisting of all inclusions $D' \hookrightarrow D$.
\end{lemma}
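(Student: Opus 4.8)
The plan is to check the two assertions packaged in the statement: that the prescribed assignment of a unique family to each object is a \emph{coverage}, and that the resulting site is \emph{subcanonical}. Write $\iota_{D'} \maps D' \hookrightarrow D$ for the inclusion of a nonempty subset $D' \subseteq D$, so that the single covering family of $D$ is $(\iota_{D'} \mid \emptyset \neq D' \subseteq D)$. The organizing remark, which I use twice, is that this family always contains the identity $\id_D = \iota_D$, obtained by taking $D' = D$ (a nonempty subset of itself, hence an object of $\Simplicial$).

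For the coverage axiom, suppose we are given the covering family of $D$ together with an arbitrary morphism $g \maps C \to D$ in $\Simplicial$. I take as my covering family of $C$ the unique one, namely all inclusions $(\iota_{C'} \mid \emptyset \neq C' \subseteq C)$, and I must exhibit, for each $C'$, a factorization of the composite $g\,\iota_{C'} \maps C' \to D$ through some $\iota_{D'}$. This is immediate: $g\,\iota_{C'}$ has image $g(C')$, a nonempty subset of $D$, so it factors as the surjection $C' \twoheadrightarrow g(C')$ followed by $\iota_{g(C')} \maps g(C') \hookrightarrow D$, which is a member of the covering family of $D$. Hence the stability condition holds, and the assignment is a coverage.

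For subcanonicity I must show that, for each domain $D_0$, the representable presheaf $\hom(-,D_0)$ is a sheaf, i.e.\ that every compatible family glues uniquely. Here the identity-in-the-cover remark does essentially all the work. A compatible family assigns to each nonempty $D' \subseteq D$ a function $\varphi_{D'} \maps D' \to D_0$, and compatibility applied to the commuting square built from $\iota_{D'}$ and $\id_D$ --- take the test object $C = D'$ with $g = \id_{D'}$ and $h = \iota_{D'}$, so that $\iota_{D'} \circ \id_{D'} = \id_D \circ \iota_{D'}$ --- forces $\varphi_{D'} = \varphi_D \circ \iota_{D'} = \varphi_D|_{D'}$ for every $D'$. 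Thus the only candidate gluing is $\varphi := \varphi_D$; it restricts correctly to each $\varphi_{D'}$ by the equation just derived, and it is the unique such plot because $\id_D$ lies in the family, so the condition $\hom(\id_D,D_0)(\varphi) = \varphi = \varphi_D$ already pins $\varphi$ down. In fact this argument never used that $X$ is representable, so every presheaf on this site is a sheaf, and subcanonicity is a special case.

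I expect no serious obstacle. The only step requiring genuine care is reading off the correct commuting square in the definition of compatibility so as to identify $\varphi_{D'}$ with the restriction $\varphi_D|_{D'}$; once that identification is in hand, both existence and uniqueness of the gluing are forced by the presence of the identity in each covering family, and the coverage axiom reduces to the trivial observation that any function factors through the inclusion of its image.
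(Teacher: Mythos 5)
Your proof is correct and follows essentially the same route as the paper's: both arguments hinge on the observation that every covering family contains the identity inclusion, which makes the coverage axiom immediate and forces every presheaf (not just the representables) to satisfy the sheaf condition. The only cosmetic difference is that for the stability condition you factor $g\,\iota_{C'}$ through the inclusion of its image $g(C')$, whereas the paper factors it even more simply through $\id_D$ itself; both are valid.
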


\begin{proof}
Given a covering family $(f_i\maps D_i\hookrightarrow D|i\in I)$ and a
function $g\maps C \to D$, each function in a covering family having
$C$ as codomain composed with $g$ clearly factors through some $f_i$.
For instance, take $f_i$ to be the identity function on $D$.  The coverage
is clearly subcanonical since each covering includes the identity morphism.
\end{proof}

Henceforth we make $\Simplicial$ into a concrete site with the 
above coverage.  Since every covering family contains the identity,
this coverage is `vacuous': every presheaf is a sheaf.   Presheaves 
on $\Simplicial$ have been studied by Grandis under the name {\bf symmetric 
simplicial sets}, since they resemble simplicial sets whose
simplices have unordered vertices \cite{Grandis}.  It turns out that 
{\it concrete} sheaves on $\Simplicial$ are simplicial complexes:

\begin{proposition} \label{simplicial.prop}
The category of $\Simplicial$ spaces 
is equivalent to the category of simplical complexes.
\end{proposition}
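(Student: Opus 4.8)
The plan is to follow the template of Prop.\ \ref{chen.prop}: build functors between simplicial complexes and $\Simplicial$ spaces and check that they are mutually inverse up to natural isomorphism. Write $(A,K)$ for a simplicial complex. Recall that the terminal object of $\Simplicial$ is a one-point set $1$, so a concrete sheaf $X$ on $\Simplicial$ has underlying set $X(1)$, and by concreteness each $X(D)$ is identified with a set of functions $\u{\varphi} \maps \hom(1,D) \to X(1)$; since $\hom(1,D) \cong D$, a plot in $X$ with domain $D$ is literally a function $D \to X(1)$, and functoriality makes the family of these sets closed under precomposition with arbitrary morphisms of $\Simplicial$. Crucially, the coverage on $\Simplicial$ is vacuous --- every covering family contains the identity --- so the sheaf condition is automatic and all the content lies in the presheaf structure together with concreteness.

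Going from simplicial complexes to $\Simplicial$ spaces, I would send $(A,K)$ to the presheaf $X$ with $X(1) = A$ and, for each nonempty finite set $D$, $X(D) = \{\varphi \maps D \to A \mid \varphi(D) \in K\}$, acting by $X(g)(\varphi) = \varphi g$. Functoriality needs $\varphi(D) \in K \Rightarrow (\varphi g)(D') \in K$, which holds since $(\varphi g)(D') \subseteq \varphi(D)$ and $K$ is downward closed (axiom 2). This $X$ is trivially a sheaf and is concrete because $\u{\varphi} = \varphi$ under $\hom(1,D) \cong D$. A map $f \maps (A,K) \to (B,L)$ of simplicial complexes induces the natural transformation with components $\varphi \mapsto f\varphi$, legitimate because $(f\varphi)(D) = f(\varphi(D)) \in L$ exactly when $f$ carries simplices to simplices.

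In the other direction, from a concrete sheaf $X$ I would form $(A,K)$ with $A = X(1)$ and $K$ the set of images of plots; equivalently, a nonempty finite $S \subseteq A$ lies in $K$ iff the inclusion function $j_S \maps S \to A$ is a plot in $X(S)$ (viewing $X(S)$ as functions $S \to A$). Axiom 1 holds since a singleton is isomorphic to the terminal object $1$, so $X$ on a singleton consists of \emph{all} functions into $A$, whence each $\{a\} \in K$. Axiom 2 holds because, for a nonempty $T \subseteq S$ with inclusion $\iota \maps T \to S$, we have $X(\iota)(j_S) = j_S \iota = j_T$, so $S \in K$ gives $T \in K$. A natural transformation $F \maps X \To Y$ induces $f = F_1 \maps X(1) \to Y(1)$; a short naturality-and-concreteness computation identifies $F_S(j_S)$ with the restriction $f|_S$, so $S \in K$ forces $f(S) \in L$ and $f$ is a map of simplicial complexes.

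The crux --- and the step I expect to demand the most care --- is the roundtrip identity that, for an \emph{arbitrary} concrete sheaf $X$, recovers the plots exactly: $\varphi \in X(D)$ iff $\varphi(D) \in K$. Here I would use the epi--mono factorization $\varphi = j_S \varphi'$ through the image $S = \varphi(D)$, with $\varphi' \maps D \to S$ surjective. Since $\varphi'$ is a surjection of nonempty finite sets it has a section $s \maps S \to D$, and then $X(s)(\varphi) = \varphi s = j_S$, so $\varphi \in X(D)$ forces $j_S \in X(S)$, i.e.\ $\varphi(D) \in K$; conversely $S \in K$ gives $X(\varphi')(j_S) = j_S \varphi' = \varphi \in X(D)$. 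This use of sections of surjections --- available precisely because the morphisms of $\Simplicial$ are \emph{all} functions, not merely inclusions --- is what pins the plots down and handles non-injective plots; granting this identity, the remaining checks that the two functors are mutually inverse and respect composition are routine.
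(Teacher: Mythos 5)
Your proof is correct and follows essentially the same route as the paper's: a functor in each direction, with simplices given by images of plots and plots recovered as exactly those functions whose image is a simplex. Your explicit section-of-a-surjection argument (using that \emph{all} functions are morphisms of $\Simplicial$) pins down the identification of $X(D)$ with $\{\varphi \maps D \to X(1) \mid \varphi(D) \in K\}$, a point the paper's fullness and essential-surjectivity steps leave implicit, so this is a sharpening rather than a divergence.
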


\begin{proof}
We define a functor from the category of $\Simplicial$ spaces to the
category of simplicial complexes.   We use $n$ to stand for an 
$n$-element set.  Since the underlying set $\hom(1,n)$ of 
$n \in \Simplicial$ is naturally isomorphic to $n$, we shall not bother 
to distinguish between the two.

Given an $\Simplicial$ space, that is a concrete sheaf $\X \maps 
\Simplicial^{op}\to\Set$, we define a simplicial complex $(X,S)$
with $X = \X(1)$ and $K= \{\im \u{\varphi}| \varphi\in \X(n), n \in 
\Simplicial \}$.
To check axiom 1, we note that a point $x \in X$ is a plot $\varphi \in \X(1)$, and $\lbrace x \rbrace = \im \u{\varphi} \in K$.  To check axiom 2, we fix an object $n$, a plot
$\varphi\in \X(n)$ and a subset $Y\subseteq \im \u{\varphi} \in K$.
We consider $\u{\varphi}^{-1}(Y)\subseteq n$ and let $m$ be the object
in $\Simplicial$ representing the finite set of cardinality
$|\u{\varphi}^{-1}(Y)|$.  There is an inclusion $m \hookrightarrow
\u{\varphi}^{-1}(Y)\subseteq n$ and commutativity of
\[
\xymatrix{
S(n)\ar[r]\ar@{ >->}[d] & S(m)\ar@{ >->}[d]\\
\hom(n,S(1))\ar[r] & \hom(m,S(1))
}
\]
shows that $Y$ is an element of $K$ and that the structure defined is,
in fact, a simplicial complex.

Given a natural transformation 
$\f \maps \X\Rightarrow \Y$ between $\Simplicial$ spaces we obtain a map 
$f = \f_1 \maps \X(1) \to \Y(1)$.  By the commutativity of
\[
\xymatrix{
\X(n)\ar[r]\ar@{ >->}[d] & \X(n)\ar@{ >->}[d]\\
\hom(n,\X(1))\ar[r] & \hom(n,\Y(1))
}
\]
we see that this defines a map of simplicial complexes and this process 
clearly preserves identities and composition.  Since a map $\f \maps 
\X \Rightarrow \Y$ of $\Simplicial$ spaces is completely determined by the 
function $\f_1\maps \X(1)\to \Y(1)$ it is clear that this functor is
faithful.  We see that the functor is full since given a map of
simplicial complexes $f\maps (X,K)\to (Y,L)$ and a morphism between finite
sets $j\maps m\to n$, then the naturality square
\[
\xymatrix{
\X(n)\ar[d]_{\f(n)}\ar[r]^{\X(j)} & \X(m)\ar[d]^{\f(m)}\\
\Y(n)\ar[r]_{\Y(j)} & \Y(m)
}
\]
commutes, thus defining a natural transformation between $\Simplicial$
spaces.

We can also reverse the process described, taking a
simplicial complex $(X,K)$ and defining an $\Simplicial$ space $\X$
whose image is isomorphic to $(X,K)$.  For each $n \in \Simplicial$,
we let $\X(n)$ be the set of $n$-element sets $S \in K$.  The
downward closure property of simplicial complexes guarantees that this
is a $\Simplicial$ space and it is easy to see that one can construct
an isomorphism from the image of this $\Simplicial$ space under our
functor to $(X,K)$.  Thus, we have obtained an equivalence of categories.
\end{proof}

\section{Convenient Properties of Generalized Spaces}
\label{quasitopos}

In this section we establish convenient properties of any category 
of generalized spaces.   We begin with some handy notation.  In Section 
\ref{generalized_spaces} we introduced three closely linked notions of 
`underlying set' or `underlying function' in the context of a concrete site 
$\D$.  It will now be convenient, and we hope not confusing, to denote all 
three of these by an underline:

\begin{itemize}
\item The underlying set of a domain: $\u{D} = \hom(1,D)$

Any concrete site $\D$ has an `underlying set' functor 
$\hom(1,-) \maps \D \to \Set$.  Henceforth we denote this 
functor by an underline:
\[            \u{\;\;} \maps \D \to \Set  . \]
So, any domain $D \in \D$ has an underlying set $\u{D}$, and any morphism 
$f \maps C \to D$ in $\D$ has an underlying function $\u{f} \maps \u{C} \to 
\u{D}$.   The concreteness condition on $\D$ says that this underlying set 
functor is faithful.

\item The underlying set of a generalized space: $\u{X} = X(1)$

Any generalized space $X\maps \D^{\op} \to \Set$ has an 
underlying set $X(1)$.   Henceforth we denote this set as $\u{X}$.  Similarly,
any map of generalized spaces $f \maps X \to Y$ has an 
underlying function $f_1 \maps X(1) \to Y(1)$, which we henceforth write 
as $\u{f}\maps \u{X} \to \u{Y}$.   It is easy to check that these combine
to give an `underlying set' functor
\[        \u{\;\;} \maps \D\Space \to \Set . \]
In Proposition \ref{faithful.prop} we show that this underlying set
functor is also faithful. 

\item The underlying function of a plot: $\u{\varphi}(d) = X(d)(\varphi)$

For any generalized space $X\maps \D^{\op} \to \Set$, any plot 
$\varphi\in X(D)$ has an underlying function $\u{\varphi}\maps \u{D} \to 
\u{X}$ defined as above.   The concreteness condition in the definition of  
`generalized space' says the map from plots to their underlying functions is 
one-to-one.  One can check that this map defines a natural transformation
\[
\u{\;\;} \maps X(D) \to \u{X}^{\u{D}}\, .\]

\end{itemize}

\begin{proposition} \label{faithful.prop} 
The underlying set functor $\u{\;\;} \maps \D\Space \to \Set$ is faithful.   
\end{proposition}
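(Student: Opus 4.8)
The plan is to unwind what faithfulness means and then let the concreteness condition carry the whole argument. Suppose $F, G \maps X \To Y$ are two maps of generalized spaces with the same underlying function, so that $\u{F} = \u{G} \maps \u{X} \to \u{Y}$; recall this means $F_1 = G_1 \maps X(1) \to Y(1)$. I must show $F = G$, that is, $F_D = G_D$ for every domain $D \in \D$.

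First I would fix a domain $D$ and a plot $\varphi \in X(D)$ and reduce the problem to comparing underlying functions. Since $Y$ is a concrete sheaf, the map sending each plot $\psi \in Y(D)$ to its underlying function $\u{\psi} \maps \u{D} \to \u{Y}$ is one-to-one (Def.\ \ref{concrete_sheaf}). Hence to prove $F_D(\varphi) = G_D(\varphi)$ it suffices to show that the two plots $F_D(\varphi)$ and $G_D(\varphi)$ of $Y$ have equal underlying functions.

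The key computation expresses $\u{F_D(\varphi)}$ in terms of data we already control. By definition, for any point $d \in \u{D} = \hom(1,D)$ we have $\u{F_D(\varphi)}(d) = Y(d)(F_D(\varphi))$. Now I would invoke naturality of $F$ applied to the morphism $d \maps 1 \to D$: the naturality square relating $F_1$, $F_D$, $X(d)$, and $Y(d)$ gives $Y(d) \circ F_D = F_1 \circ X(d)$. Therefore
\[ \u{F_D(\varphi)}(d) = F_1\bigl(X(d)(\varphi)\bigr) = \u{F}\bigl(\u{\varphi}(d)\bigr), \]
so that $\u{F_D(\varphi)} = \u{F} \circ \u{\varphi}$. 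The identical computation yields $\u{G_D(\varphi)} = \u{G} \circ \u{\varphi}$. Since $\u{F} = \u{G}$ by hypothesis, these two underlying functions agree, and concreteness of $Y$ then forces $F_D(\varphi) = G_D(\varphi)$. As $\varphi$ and $D$ were arbitrary, $F = G$.

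The argument is a short diagram chase, so I do not expect a genuine obstacle; the one point requiring care is the variance bookkeeping in the naturality step — remembering that the morphism $d \maps 1 \to D$ induces $X(d) \maps X(D) \to X(1)$ (and not the reverse) because $X$ is contravariant, and checking that this is exactly what turns $\u{F_D(\varphi)}$ into the composite $\u{F} \circ \u{\varphi}$. Once that identity is established, faithfulness is immediate from the concreteness of the target $Y$, and no properties of the site $\D$ beyond those already built into Def.\ \ref{dspace} are needed.
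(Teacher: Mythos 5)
Your proof is correct and is essentially the same argument the paper gives: fix a plot $\varphi \in X(D)$, use concreteness of $Y$ to reduce to comparing underlying functions, and then use the naturality square for $d \maps 1 \to D$ to show $\u{F_D(\varphi)} = \u{F}\circ\u{\varphi}$. The variance bookkeeping you flag is handled identically in the paper's diagram chase.
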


\begin{proof}
Given $\D$ spaces $X$ and $Y$, suppose $f, g \maps X \to Y$ have
$\u{f}=\u{g}$.  We need to show that $f = g$.  Recall that
$f$ and $g$ are natural transformations between the functors
$X,Y \maps \D^\op \to \Set$, so given $D\in\D$ the 
following squares commute for each $d\in \u{D}$:
\[
\xymatrix{
X(D)\ar[r]^{f_D}\ar[d]_{X(d)} & Y(D)\ar[d]^{Y(d)} 
& & X(D)\ar[r]^{g_D}\ar[d]_{X(d)} & Y(D)\ar[d]^{Y(d)} \\
X(1)=\u{X}\ar[r]_{\u{f}} & \u{Y}=Y(1) 
& & X(1)=\u{X}\ar[r]_{\u{g}} & \u{Y}=Y(1)
}
\]
We need to show that for any
$\varphi\in X(D)$, $f_D(\varphi) = g_D(\varphi)$ in $Y(D)$. 
Since the natural transformation
\[       \u{\;\;} \maps Y(D) \to \u{Y}^{\u{D}}  \]
is one-to-one, it suffices to show that 
\[   \u{f_D(\varphi)}(d) = \u{g_D(\varphi)}(d) \] 
for all $d\in D$, or in other words,
\[       Y(d) f_D(\varphi) = Y(d) g_D(\varphi)  .\]
By the above commuting squares, this amounts to showing
\[         \u{f} X(d)(\varphi) = \u{g} X(d)(\varphi)  \]
but this follows from $\u{f} = \u{g}$.
\end{proof}

There is a further relation between the two `underlying set' functors
mentioned above.  In the case of Chen spaces, every convex set 
naturally becomes a Chen space with the same underlying set.  This 
happens quite generally:

\begin{proposition}\label{representable.prop}
Every representable presheaf on $\D$ is a $\D$ space.  The underlying
set of the $\D$ space $\hom(-,D) \maps \D^\op \to \Set$ is equal to
$\u{D}$.
\end{proposition}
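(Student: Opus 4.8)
The plan is to verify that the representable presheaf $X = \hom(-, D)$ satisfies the two defining properties of a $\D$ space from Def.\ \ref{dspace}, namely that it is a sheaf and that it is concrete, and then to read off its underlying set. The sheaf condition requires no work: being a concrete site (Def.\ \ref{concrete_site}) includes being subcanonical, and subcanonical means precisely that every representable presheaf is a sheaf. So the only substantive point is concreteness.

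For concreteness I would fix an object $C \in \D$ and compute the underlying function $\u{\varphi}$ of a plot $\varphi \in X(C) = \hom(C, D)$. By the defining formula $\u{\varphi}(c) = X(c)(\varphi)$ for $c \in \hom(1, C) = \u{C}$, and since $X(c) = \hom(c, D)$ acts by precomposition, this gives $\u{\varphi}(c) = \varphi \circ c$. This is exactly the value at $c$ of $\hom(1, \varphi)$, so the underlying function of the plot $\varphi$ coincides with the underlying function $\u{\varphi} \maps \u{C} \to \u{D}$ of the \emph{morphism} $\varphi \maps C \to D$ induced by the underlying-set functor on $\D$.

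The crux is then to recognize that the map $\varphi \mapsto \u{\varphi}$ on $\hom(C, D)$ whose injectivity concreteness demands is nothing other than the action on morphisms of the functor $\hom(1, -) \maps \D \to \Set$. The first clause in the definition of a concrete site (Def.\ \ref{concrete_site}) asserts that this functor is faithful, which says exactly that $\varphi \mapsto \hom(1, \varphi)$ is one-to-one on each hom-set. Hence $\hom(-, D)$ is concrete, and no diagram chase is needed.

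Finally, the underlying set of the $\D$ space $\hom(-, D)$ is by definition $X(1) = \hom(1, D) = \u{D}$, giving the second assertion. I do not expect a genuine obstacle here: the entire content lies in the conceptual identification of the `underlying function of a plot' with the `underlying function of a morphism' for representables, after which concreteness of the presheaf is literally the faithfulness of the site's underlying-set functor.
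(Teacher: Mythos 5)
Your proposal is correct and follows essentially the same route as the paper: subcanonicity handles the sheaf condition, the underlying function of a plot $\varphi \in \hom(C,D)$ is identified with $\u{\varphi} = \hom(1,\varphi)$, and faithfulness of $\hom(1,-)$ gives injectivity, with the underlying set read off as $\hom(1,D) = \u{D}$. Your version merely makes explicit the computation $\u{\varphi}(c) = X(c)(\varphi) = \varphi\circ c$ that the paper leaves implicit.
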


\begin{proof}
Since a concrete site is subcanonical by definition, every
representable presheaf on $\D$ is a sheaf.  So, to show that the
representable presheaves are $D$ spaces, we just need to show that
they are \textsl{concrete} sheaves.  Suppose $X \cong \hom(-, D)$ is a
representable presheaf.  Then given $C\in\D$, the map $\u{\;\;} \maps
\hom(C,D) \to {\u{C}}^{\u{D}}$ takes a morphism $f\maps C\to D$ to its
underlying function $\u{f}\maps \u{C} \to \u{D}$ and thus is
one-to-one.  It follows that $\hom(-,D)$ is concrete.  The underlying
set of this $\D$ space is $\hom(1,D)$, which is just $\u{D}$.
\end{proof}

\subsection{Subspaces, Quotient Spaces, and Limits}

With these preliminaries in hand, we now study subspaces and
quotient spaces of $\D$ spaces, and show that the category of $\D$
spaces has a weak subobject classifier, $\Omega$.  In the process
we will show that $\D\Space$ has limits.

For Chen spaces or 
diffeological spaces, $\Omega$ is just the 2-element set $2 = \{0,1\}$ 
equipped with its indiscrete smooth structure.  In general, $\Omega$ 
will have the 2-element set as its underlying set, and for any $D \in \D$,
every function $\varphi \maps \u{D} \to 2$ will count as a plot.
So, $\Omega(D)$ will be the power set of $\u{D}$:

\begin{proposition} \label{omega}
There is a $\D$ space $\Omega$ such that for any object $D \in \D$, 
$\Omega(D) = 2^{\u{D}}$, and for any morphism $f \maps C \to D$ in $\D$,
$\Omega(f) \maps 2^{\u{D}} \to 2^{\u{C}}$ sends any plot $\varphi \maps
\u{D} \to 2$ to the plot $\varphi \u{f} \maps \u{C} \to 2$.
\end{proposition}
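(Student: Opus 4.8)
The plan is to verify in turn the three properties that make $\Omega$ a $\D$ space: that it is a presheaf, that it is a sheaf, and that it is concrete. The presheaf check is routine and rests only on the functoriality of underlining $\u{\;\;}\maps\D\to\Set$. Since $\u{\id_D}=\id_{\u{D}}$ we get $\Omega(\id_D)=\id$, and since $\u{fh}=\u{f}\,\u{h}$ we get $\Omega(fh)(\varphi)=\varphi\,\u{f}\,\u{h}=\Omega(h)\bigl(\Omega(f)(\varphi)\bigr)$, which is exactly the contravariance required of a functor $\Omega\maps\D^\op\to\Set$.

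The heart of the argument is the sheaf condition. Given a covering family $(f_i\maps D_i\to D\mid i\in I)$ and a compatible collection $\{\varphi_i\in\Omega(D_i)\}$ — each $\varphi_i$ being a function $\u{D_i}\to 2$ — I must produce a unique $\varphi\maps\u{D}\to 2$ with $\varphi\,\u{f_i}=\varphi_i$ for every $i$. Here I would invoke the second axiom in the definition of a concrete site: the underlying functions $(\u{f_i}\maps\u{D_i}\to\u{D})$ are jointly surjective. Thus every point $d\in\u{D}$ can be written as $d=\u{f_i}(d_i)=f_i d_i$ for some $i$ and some $d_i\maps 1\to D_i$, and I would \emph{define} $\varphi(d)=\varphi_i(d_i)$. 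Joint surjectivity forces uniqueness of any gluing: if $\varphi'\,\u{f_i}=\varphi_i$ for all $i$, then $\varphi'(d)=\varphi_i(d_i)=\varphi(d)$. So all the content sits in \textbf{well-definedness}, which I expect to be the main obstacle.

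To prove well-definedness, suppose $d=f_i d_i=f_j d_j$ with $d_i\maps 1\to D_i$ and $d_j\maps 1\to D_j$. I would apply the compatibility hypothesis with the terminal object $1$ as the test object, taking $g=d_i$ and $h=d_j$; the relevant square commutes precisely because $f_i d_i=f_j d_j$, so compatibility gives $\Omega(d_i)(\varphi_i)=\Omega(d_j)(\varphi_j)$, that is $\varphi_i\,\u{d_i}=\varphi_j\,\u{d_j}$ as elements of $\Omega(1)=2^{\u{1}}$. Since $1$ is terminal, $\u{1}$ is a one-point set whose element is carried by $\u{d_i}$ to $d_i$ and by $\u{d_j}$ to $d_j$; evaluating therefore yields $\varphi_i(d_i)=\varphi_j(d_j)$, as required. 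A short computation then confirms $(\varphi\,\u{f_i})(d_i)=\varphi(f_i d_i)=\varphi_i(d_i)$, so $\varphi\,\u{f_i}=\varphi_i$ and the sheaf condition holds.

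Finally, concreteness is immediate and will also pin down the underlying set. Since $1$ is terminal, $\u{1}$ is a single point, so $\Omega(1)=2^{\u{1}}\cong 2$ is the $2$-element set promised in the preceding discussion. For a plot $\varphi\in\Omega(D)$ and a point $d\in\u{D}$, the underlying function is $\u{\varphi}(d)=\Omega(d)(\varphi)=\varphi\,\u{d}$, and since $\u{d}$ sends the unique point of $\u{1}$ to $d$, this equals $\varphi(d)$ under the identification $\Omega(1)\cong 2$. Hence $\u{\varphi}=\varphi$, so the map $\varphi\mapsto\u{\varphi}$ is the identity and in particular one-to-one. Thus $\Omega$ is a concrete sheaf, i.e.\ a $\D$ space, with underlying set the $2$-element set.
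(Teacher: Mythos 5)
Your proof is correct and follows essentially the same route as the paper's: joint surjectivity of the $\u{f_i}$ gives existence and uniqueness of the glued function, and well-definedness comes from applying the compatibility condition to the commuting square over the terminal object $1$. If anything your version is slightly more careful, since you phrase the overlap condition as $f_i d_i = f_j d_j$ rather than as a literal intersection $\u{D_i}\cap\u{D_j}$, which handles the case where the $f_i$ are not inclusions.
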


\begin{proof}
$\Omega$ is clearly a presheaf.   To show that it is a sheaf, we 
suppose $(f_i \maps D_i \to D|i \in I)$ is a covering family 
and $\{\varphi_i \in \Omega(D_i)|i \in I\}$ is a compatible family of 
plots, and show there exists a unique plot $\varphi \in \Omega(D)$ with 
$\Omega(f_i) \varphi = \varphi_i$.  The compatible family of plots
consists of functions $\varphi_i \maps \u{D_i} \to 2$, and we need
to show there exists a unique function $\varphi \maps \u{D} \to 2$
with $\varphi f_i = \varphi_i$.

For the existence of $\varphi$, suppose $d \in \u{D}$.  Then since the
family $(\u{f}_i\maps \u{D}_i\to \u{D})$ is jointly surjective we can
find an $i$ such that there exists $d'\in\u{D_i}$ with $\u{f_i}(d')=d$.
We define $\varphi(d)=\varphi_i(d')$.  To show that the $\varphi(d)$ is
independent of the choice of $i$, suppose that
$d'\in\u{D_i}\cap\u{D_j}$ and consider morphisms $g\maps 1\to D_i$ and
$h\maps 1\to D_j$ such that $\u{g}(1)=d'=\u{h}(1)$.  Since the plots
were chosen to be compatible with the family, we have
$\Omega(g)(\varphi_i)=\Omega(h)(\varphi_j)$.  In other words,
$\varphi_i(d')=\varphi_j(d')$.  Uniqueness follows from the family
being jointly surjective.  Finally, since plots $\varphi \in
\Omega(D)$ are in one-to-one correspondence with functions
$\u{\varphi} \maps \u{D} \to 2$, the sheaf $\Omega$ is concrete.
\end{proof}

\begin{proposition} \label{epi.mono}
A monomorphism (resp.\ epimorphism) in $\D\Space$ is a map 
$f\maps X\to Y$ for which the underlying function $\u{f}$ is injective 
(resp.\ surjective). 
\end{proposition}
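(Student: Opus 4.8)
The statement packages four implications: injectivity of $\u{f}$ implies $f$ is monic, $f$ monic implies $\u{f}$ injective, and the two dual statements for epimorphisms. The plan is to dispatch the two ``easy'' directions at once using faithfulness, then handle the two converses by producing enough test maps to detect failures of injectivity or surjectivity. Throughout I would lean on Proposition \ref{faithful.prop}, which says the underlying-set functor $\u{\;\;}\maps\D\Space\to\Set$ is faithful, together with the fact that it is a functor, so that $\u{gf}=\u{g}\,\u{f}$.

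For the easy directions, suppose $\u{f}$ is injective and $fg=fh$ for maps $g,h\maps Q\to X$. Applying the underlying-set functor gives $\u{f}\,\u{g}=\u{f}\,\u{h}$, and injectivity of $\u{f}$ forces $\u{g}=\u{h}$, whence $g=h$ by faithfulness; so $f$ is monic. The surjective-implies-epic direction is identical with the arrows reversed, cancelling the surjection $\u{f}$ on the right.

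For ``monic implies $\u{f}$ injective'', I would realize points of a $\D$ space as maps out of a fixed object. The representable presheaf $\hom(-,1)$ on the terminal domain $1$ is a $\D$ space (Proposition \ref{representable.prop}) with one-point underlying set, and by the Yoneda correspondence maps $\hom(-,1)\To X$ match up naturally with elements of $X(1)=\u{X}$; concretely, a point $x$ gives the natural transformation $D\mapsto\bigl(!_D\mapsto X(!_D)(x)\bigr)$, where $!_D\maps D\to 1$ is the unique morphism. Under this correspondence, post-composition with $f$ induces exactly the function $\u{f}$ on points. Hence if $f$ is monic, post-composition with $f$ is injective on maps out of $\hom(-,1)$, which says precisely that $\u{f}$ is injective.

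The main obstacle is ``epic implies $\u{f}$ surjective'', where I must manufacture two distinct maps out of $Y$ that agree after precomposing with $f$. Here the weak subobject classifier $\Omega$ of Proposition \ref{omega} does the work. I would first check that any function $S\maps\u{Y}\to 2$ determines a map $\chi^S\maps Y\to\Omega$ by the formula $\chi^S_D(\psi)=S\circ\u{\psi}$ for $\psi\in Y(D)$; naturality reduces to the identity $\u{Y(k)\psi}=\u{\psi}\circ\u{k}$, and one reads off $\u{\chi^S}=S$. Now, assuming $\u{f}$ is not surjective, set $A=\u{f}(\u{X})\subsetneq\u{Y}$ and take $S_1\equiv 1$ and $S_2=\chi_A$, the characteristic function of $A$. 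These differ at any point of $\u{Y}\setminus A$, so $\chi^{S_1}\neq\chi^{S_2}$ by faithfulness; yet $S_1$ and $S_2$ agree on $A$, hence $\u{\chi^{S_1}}\,\u{f}=\u{\chi^{S_2}}\,\u{f}$, and faithfulness again gives $\chi^{S_1}f=\chi^{S_2}f$. Thus $f$ fails to be epic, and contrapositively an epimorphism has surjective underlying function. The one delicate point to get right is the naturality verification for $\chi^S$, which is exactly where the precise definition of $\Omega(k)$ from Proposition \ref{omega} is needed.
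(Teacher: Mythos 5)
Your proof is correct and follows essentially the same route as the paper's: faithfulness of $\u{\;\;}$ handles the two easy implications, maps out of the terminal (representable) $\D$ space detect injectivity when $f$ is monic, and maps into $\Omega$ detect surjectivity when $f$ is epic. The only differences are presentational — you argue the epi case contrapositively and explicitly verify that every function $\u{Y}\to 2$ underlies a map $Y\to\Omega$, a point the paper's proof asserts without proof — and note that distinguishing $\chi^{S_1}$ from $\chi^{S_2}$ needs only functoriality of $\u{\;\;}$, not faithfulness.
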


\begin{proof}
For one direction, recall that $\u{\;\;} \maps \D\Space \to \Set$ is 
faithful by Prop.\ \ref{faithful.prop}, so a morphism 
$f \maps X \to Y$ in $\D\Space$ is monic (resp.\ epic) 
if its image under this functor has the same property.   

Conversely, suppose $f$ is monic.  Then the map from $\hom(1,X)$ to 
$\hom(1,Y)$ given by composing with $f$ is injective, but this 
says precisely that $\u{f}$ is injective.  

Next, suppose $f$ is epic.  Then the map from $\hom(Y,\Omega)$ to
$\hom(X,\Omega)$ given by composing with $f$ is injective, but
this says that the map from $2^{\u{Y}}$ to $2^{\u{X}}$ sending
$\chi \maps \u{Y} \to 2$ to $\chi f \maps \u{X} \to 2$ is injective, 
which implies $f$ is surjective.  \end{proof}

\begin{definition} \label{strong.mono}
In any category, a monomorphism $i\maps A\to X$ is {\bf strong} if 
given any epimorphism $p\maps E\to B$ and morphisms $f,g$ making the
outer square here commute:
\[
\xymatrix{
E \ar[r]^{f}\ar@{-> >}[d]_{p} & A \ar@{ >->}[d]^{i}\\
B \ar[r]_{g}\ar@{-->}[ur]|-{{t}}   & X\\
}
\]
then there exists a unique $t\maps B \to A$ 
making the whole diagram commute.
\end{definition}

\begin{definition} \label{subspace}
We say a morphism of $\D$ spaces $i \maps A \to X$ makes
$A$ a {\bf subspace} of $X$ if for any plot $\varphi\in X(D)$ 
with $\u{\varphi}(\u{D}) \subseteq \u{i}(\u{A})$, there 
exists a unique plot $\psi \in A(D)$ with $i_D(\psi) = \varphi$.
\end{definition}

\begin{proposition} \label{strong.mono.prop} A morphism of $\D$ spaces 
$i\maps A \to X$ is a strong monomorphism if and only if $i$ makes $A$ 
a subspace of $X$.  
\end{proposition}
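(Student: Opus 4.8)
The plan is to prove the two implications separately, after one preliminary observation used throughout: if $i \maps A \to X$ is a monomorphism, then each component $i_D \maps A(D) \to X(D)$ is injective. Indeed, by Prop.\ \ref{epi.mono} the underlying function $\u{i}$ is injective, and a short naturality computation shows $\u{i_D(\psi)} = \u{i} \circ \u{\psi}$ for every $\psi \in A(D)$; combining injectivity of $\u{i}$ with concreteness of $A$ then forces $i_D$ to be injective. This immediately yields the uniqueness clauses in both Def.\ \ref{subspace} and the lifting diagram, so in each direction the real work is \emph{existence}.

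For the direction ``subspace $\Rightarrow$ strong monomorphism'', I would first check that $i$ is monic: if $i_1(a_1) = i_1(a_2)$, the common value is a one-point plot whose underlying function lands in $\u{i}(\u{A})$, so the uniqueness clause of Def.\ \ref{subspace} identifies $a_1$ with $a_2$, and $i$ is a monomorphism by Prop.\ \ref{epi.mono}. Now given an epimorphism $p \maps E \to B$ and maps $f \maps E \to A$, $g \maps B \to X$ with $i f = g p$, I would define the lift $t_D \maps B(D) \to A(D)$ by sending a plot $\beta \in B(D)$ to the unique $\psi \in A(D)$ with $i_D(\psi) = g_D(\beta)$. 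The crux is checking that Def.\ \ref{subspace} applies, i.e.\ that $\u{g_D(\beta)}(\u{D}) \subseteq \u{i}(\u{A})$; this is exactly where $p$ being epic enters. By Prop.\ \ref{epi.mono} the function $\u{p}$ is surjective, and chasing a point $d \in \u{D}$ through the naturality of $g$ and the equation $i f = g p$ expresses $\u{g_D(\beta)}(d)$ as $\u{i}(f_1(e))$ for a suitable $e \in \u{E}$, hence inside $\u{i}(\u{A})$. Naturality of $t$, together with the identities $i t = g$ and $t p = f$, then follow formally from $i_D t_D = g_D$ and injectivity of the components $i_C$.

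For the converse, ``strong monomorphism $\Rightarrow$ subspace'', I would fix a plot $\varphi \in X(D)$ with $\u{\varphi}(\u{D}) \subseteq \u{i}(\u{A})$ and use Yoneda to regard it as a map $\hat{\varphi} \maps \hom(-,D) \to X$ of $\D$ spaces, which is legitimate since $\hom(-,D)$ is a $\D$ space by Prop.\ \ref{representable.prop}. The key construction is the pullback of $i$ along $\hat{\varphi}$, realized explicitly as the concrete sheaf $P$ with $P(C) = \{(\mu,\psi) \in \hom(C,D) \times A(C) : X(\mu)(\varphi) = i_C(\psi)\}$, together with its projections $p \maps P \to \hom(-,D)$ and $f \maps P \to A$ satisfying $i f = \hat{\varphi} p$. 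The decisive point is that $p$ is an epimorphism: computing underlying sets gives $\u{P} = \{(\mu,\psi) : \u{\varphi}(\mu) = \u{i}(\psi)\}$, so $\u{p}$ is surjective precisely because of the hypothesis $\u{\varphi}(\u{D}) \subseteq \u{i}(\u{A})$, whence $p$ is epic by Prop.\ \ref{epi.mono}. Applying the strong-monomorphism lifting property to this square yields $t \maps \hom(-,D) \to A$ with $i t = \hat{\varphi}$, and $\psi := t_D(\id_D) \in A(D)$ then satisfies $i_D(\psi) = \varphi$ by Yoneda, giving existence.

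I expect the main obstacle to be this converse direction, and within it the verification that $P$ is a genuine $\D$ space, namely that the sub-presheaf $P$ of $\hom(-,D) \times A$ is a sheaf and is concrete, and the clean translation, via Prop.\ \ref{epi.mono}, of the set-level inclusion $\u{\varphi}(\u{D}) \subseteq \u{i}(\u{A})$ into the assertion that the pullback projection $p$ is an epimorphism. If the existence of pullbacks in $\D\Space$ has not yet been established at this stage, I would construct $P$ by hand and verify the sheaf and concreteness conditions directly, using that $\hom(-,D)$ and $A$ are themselves concrete sheaves.
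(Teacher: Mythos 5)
Your proposal is correct, and the forward direction (subspace $\Rightarrow$ strong monomorphism) is essentially the paper's argument: define $t_D(\beta)$ as the unique lift of $g_D(\beta)$ guaranteed by Def.\ \ref{subspace}, after using surjectivity of $\u{p}$ and the commuting square of underlying functions to see that $\u{g_D(\beta)}(\u{D}) \subseteq \u{i}(\u{A})$; naturality, the two triangles, and uniqueness of $t$ all follow from injectivity of the components $i_C$. (Your explicit check that a subspace inclusion is monic, and that $i_D$ is componentwise injective, is a point the paper leaves implicit.) The converse is where you genuinely diverge. The paper equips the subsets $\u{A}' = \u{\varphi}(\u{D}) \subseteq \u{X}$ and $\u{A}'' = \u{i}^{-1}\u{\varphi}(\u{D}) \subseteq \u{A}$ with subspace structures, observes that restricting $\u{i}$ gives an epimorphism $A'' \to A'$, and applies the lifting property to the resulting square $A'' \to A$, $A' \to X$; the desired plot is then $t_D(\psi)$ where $\psi$ is the lift of $\varphi$ into the subspace $A'$. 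You instead view $\varphi$ via Yoneda as a map $\hat{\varphi} \maps \hom(-,D) \to X$ (legitimate by Prop.\ \ref{representable.prop}), form the pullback $P$ of $i$ along $\hat{\varphi}$ by hand, check it is a concrete sheaf, and note that the hypothesis $\u{\varphi}(\u{D}) \subseteq \u{i}(\u{A})$ is exactly surjectivity of $\u{p} \maps \u{P} \to \u{D}$, so the projection is epic by Prop.\ \ref{epi.mono}; the lift evaluated at $\id_D$ gives the plot. Both arguments are sound. Yours buys a cleaner, more categorical construction that avoids the paper's tacit appeal to the existence of subspace smooth structures on arbitrary subsets (which the paper has not formally established at this point), at the cost of having to verify directly that $P$ is a sheaf and concrete --- which you correctly flag, and which does go through: $P$ is a pointwise pullback of sheaves, and a plot $(\mu,\psi)$ of $P$ has underlying function $(\u{\mu},\u{\psi})$, so concreteness of $\hom(-,D)$ and $A$ gives concreteness of $P$. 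You are also right to note that Prop.\ \ref{limit.prop} appears only later, so the by-hand construction is the appropriate route here; there is no circularity either way.
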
 
\begin{proof} Suppose $i \maps A \to X$ is a subspace of $X$.  
Given an epimorphism $p\maps E\to B$ and morphisms $f,g$ such that 
the outer square here:
\[
\xymatrix{
E \ar[r]^{f}\ar@{-> >}[d]_{p} & A \ar@{ >->}[d]^{i}\\
B \ar[r]_{g}\ar@{-->}[ur]|-{{t}}   & X\\
}
\]
commutes, we need to prove there exists a unique $t \maps E \to B$
making the whole diagram commute.  Define functions $t_D\maps B(D) \to A(D)$
as follows.  Note that for any plot $\varphi \in B(D)$, the plot
$g_D(\varphi) \in X(D)$ has    
\[ \u{g_D (\varphi)}(\u{D}) = \u{g} \u{\varphi} (\u{D}) \subseteq \u{i}(\u{A}) , \]
where in the first step we use the naturality of the map sending a plot to
its underlying function, and in the second we use the commutative diagram
of underlying functions.  By Def.\ \ref{subspace} it follows that there exists 
a unique plot $\psi \in A(D)$ with $i_D(\psi) = g_D(\varphi)$.  We set 
\[        t_D(\varphi) = \psi  .\]
We can check that $t$ is a natural transformation by considering a morphism 
$f\maps D'\to D$ in $\D$ and the following diagram:

\[
\xymatrix{
				&&&
			\u{A}\ar@{>->}[d]^{\u{i}}\\
\u{D'}\ar@{-->}[urrr]\ar[r]_{\u{f}} & \u{D}\ar[r]_{\u{\varphi}}\ar@{-->}[urr]|-{{\u{\psi}}} & \u{B}\ar[r]_{\u{g}} & \u{X}\\
}
\]
By the description of $t_D$ above, we see that $\u{g}\u{\varphi}$ is
uniquely lifted to a plot $\u{\psi}\maps\u{D}\to\u{A}$.  Then
$\u{g}\u{\varphi}\u{f}$ also has a unique lift, which must be
$\u{\psi}\u{f}\maps\u{D'}\to\u{A}$.  We have seen that the naturality
square
\[
\xymatrix{
B(D)\ar[r]^{t_D}\ar[d]_{B(f)} & A(D)\ar[d]^{A(f)}\\
B(D')\ar[r]_{t_{D'}} & A(D')
}
\]
commutes, and thus that $t$ is a map of $\D$ spaces.  The lower triangle
commutes by construction.  The upper triangle commutes since
$f=i^{-1}gp=tp$.  We can check that $t$ is unique at the level of the
underlying functions, where it follows from the commutativity of the
diagram and that $i$ is a monomorphism.  Now we have shown that $i$ is
a strong monomorphism.

Conversely, suppose $i\maps A\to X$ is a strong monomorphism and
consider a plot $\varphi\in X(D)$ for some $D\in\D$ with
$\u{\varphi}(\u{D})\subseteq\u{i}(\u{A})$.  We give the set
$\u{A}':=\u{\varphi}(\u{D})\subseteq\u{X}$ the subspace structure from
$X$ and we give $\u{A}'':=\u{i}^{-1}\u{\varphi}(\u{D})$ the subspace
structure of $A$.  Then a $\D\Space$ epimorphism from $A''$ to $A'$ is
induced by restricting $\u{i}$ and we have the following commutative
diagram:
\[
\xymatrix{
A''\ar@{ >->}[r]\ar@{-> >}[d] & A\ar@{ >->}[d]^{i}\\
A'\ar@{ >->}[r]_{j}\ar@{-->}[ur]|-{{t}} & X
}
\]
where $t$ exists and is unique since $i$ is a strong monomorphism.  Since
$A'$ is a subspace of $X$ and $\u{\varphi}(\u{D})=\u{A}'$, there
exists a unique plot $\psi\in A'(D)$ such that $j_D(\psi)=\varphi$.
Thus we have $t_D(\psi)\in A(D)$ and by commutativity of the diagram
$i_Dt_D(\psi)=j_D(\psi)=\varphi$.  For any other $\psi'\in A(D)$ with
$i_D(\psi')=\varphi$, we have $\psi'=t_D(\psi)$ since $i$ is a
monomorphism, and thus $t_D(\psi)$ is unique as desired.
\end{proof}

\begin{definition} \label{strong.epi}
In any category, an epimorphism $p\maps E\to B$ is {\bf strong} if 
given any monomorphism $i\maps A\to X$ and morphisms $f,g$ making the
outer square here commute:
\[
\xymatrix{
E \ar[r]^{f}\ar@{-> >}[d]_{p} & A \ar@{ >->}[d]^{i}\\
B \ar[r]_{g}\ar@{-->}[ur]|-{{t}}   & X
}
\]
then there exists a unique $t\maps B \to A$ 
making the whole diagram commute.
\end{definition}

\begin{definition}
We say a morphism of $\D$ spaces $p \maps E \to B$ makes
$B$ a {\bf quotient space} of $E$ if for every plot $\varphi \in B(D)$,
there exists a covering family $(f_i \maps D_i \to D | i \in I)$ in $\D$ and a
collection of plots $\lbrace \varphi_i \in E(D_i) | i \in I \rbrace$ such that
the following diagram commutes:

\[
\xymatrix{
 D_i \ar[r]^{\varphi_i}\ar[d]_{f_i} & E \ar[d]^{p} \\
 D \ar[r]_{\varphi} & B \\
}
\]
\end{definition}

With this definition, the underlying map $\u{p}\maps \u{E} \to \u{B}$
is a surjection and thus defines an equivalence relation, $e_1\sim
e_2$ if and only if $\u{p}(e_1)=\u{p}(e_2)$, such that $\u{B}$ is the
quotient of $\u{E}$ by this equivalence relation.  The extra condition
that every plot in $B$ comes locally from a plot in $E$ gives the
following theorem:

\begin{proposition} \label{strong.epi.prop}
A morphism of $\D$ spaces $p\maps E \to B$ is a strong epimorphism
if and only if $p$ makes $B$ a quotient space of $E$.
\end{proposition}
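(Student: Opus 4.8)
The plan is to prove both implications, exploiting the duality with Prop.~\ref{strong.mono.prop}. Throughout I will use that the underlying-set functor is faithful (Prop.~\ref{faithful.prop}), that epimorphisms are exactly the maps with surjective underlying function (Prop.~\ref{epi.mono}), and that for any $\D$ space the map sending a plot to its underlying function is injective (concreteness). Since direction~2 will quote direction~1, I prove the latter first.

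For the easy direction, suppose $p\maps E\to B$ makes $B$ a quotient space of $E$. As already noted, $\u p$ is then surjective, so $p$ is an epimorphism. To see it is \emph{strong}, I take a monomorphism $i\maps A\to X$ and maps $f\maps E\to A$, $g\maps B\to X$ with $if=gp$, and build the lift $t\maps B\to A$. First I construct $\u t$ on underlying sets: since $\u i$ is injective and $\u i\,\u f=\u g\,\u p$, the function $\u f$ is constant on the fibers of $\u p$, so there is a unique $\u t$ with $\u t\,\u p=\u f$, and it satisfies $\u i\,\u t=\u g$ because $\u p$ is surjective. The real work is to promote $\u t$ to a map of $\D$ spaces. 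Given a plot $\varphi\in B(D)$, I use the quotient property to get a covering family $(f_j\maps D_j\to D)$ and plots $\varphi_j\in E(D_j)$ with $p_{D_j}(\varphi_j)=B(f_j)(\varphi)$, and push these forward to $f_{D_j}(\varphi_j)\in A(D_j)$. A short computation with underlying functions gives $\u{f_{D_j}(\varphi_j)}=\u t\,\u\varphi\,\u{f_j}$; since $A$ is concrete this family is compatible, and since $A$ is a sheaf it glues to a unique plot $\psi\in A(D)$ with $\u\psi=\u t\,\u\varphi$. Setting $t_D(\varphi)=\psi$ defines $t$; concreteness of $A$ makes $t_D$ independent of the chosen cover and reduces naturality, as well as the identities $tp=f$ and $it=g$, to routine checks on underlying functions. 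Uniqueness of $t$ follows because $p$ is epic.

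For the converse, suppose $p\maps E\to B$ is a strong epimorphism; again $\u p$ is surjective. The idea is to compare $B$ with the ``free'' quotient of $E$. I construct a $\D$ space $\hat E$ with underlying set $\u B$ whose plots over $D$ are exactly those functions $\varphi\maps\u D\to\u B$ that locally lift through $\u p$: those for which there is a covering family $(f_j\maps D_j\to D)$ and plots $\varphi_j\in E(D_j)$ with $\u p\,\u{\varphi_j}=\varphi\,\u{f_j}$. By construction $\hat E$ is concrete, the coverage axiom makes it a presheaf, and gluing of local lifts makes it a sheaf. Every plot of $\hat E$ is the underlying function of a genuine plot of $B$, obtained by gluing the images $p_{D_j}(\varphi_j)$ in the sheaf $B$; this yields a map $c\maps\hat E\to B$ with $\u c=\id_{\u B}$ and $cq=p$, where $q\maps E\to\hat E$ is the evident quotient map (namely $\u p$ on underlying sets). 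Since $\u c$ is injective, $c$ is a monomorphism.

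It now remains to show $c$ is an isomorphism, after which $B\iso\hat E$ exhibits $B$ as a quotient of $E$: unwinding, each $\varphi\in B(D)$ corresponds under $c^{-1}$ to a plot of $\hat E$, hence lifts locally, and concreteness of $B$ upgrades the resulting underlying-function equalities to the required plot-level commuting squares. That $c$ is invertible is a general categorical fact applied to the factorization $p=c\circ q$ with $p$ strong epic and $c$ monic: the square with top $q$, sides $p$ and $c$, and bottom $\id_B$ commutes since $cq=p$, so the strong-epi property of $p$ yields $t\maps B\to\hat E$ with $tp=q$ and $ct=\id_B$; then $ctc=c$ and, since $c$ is monic, $tc=\id_{\hat E}$, so $c$ is invertible. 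I expect the main obstacle to be the construction of $\hat E$ and the verification that it is a concrete sheaf --- in particular the sheaf condition, which requires gluing local lifts along a refinement and using that composites of covering families are again covering (valid for the Grothendieck topology generated by the coverage). Everything else reduces, via faithfulness of $\u{\;\;}$ and concreteness, to elementary bookkeeping with underlying functions.
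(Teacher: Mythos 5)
Your proof is correct and follows essentially the same route as the paper: for the forward direction you define $\u{t}$ on underlying sets and promote it to a map of $\D$ spaces, and for the converse you build the sheaf of locally $\u{p}$-liftable functions (the paper obtains the same object, called $\tilde{B}$ there, by sheafifying the concrete presheaf $D \mapsto p_D(E(D))$ via the plus construction) and then play the strong-epimorphism lifting property against the induced monomorphism into $B$. If anything you are more explicit than the paper in the forward direction, where you justify that $t_D(\varphi)$ actually lands in $A(D)$ by locally lifting $\varphi$ to $E$, pushing forward to $A$, and gluing in the sheaf $A$ --- a step the paper's naturality check leaves implicit.
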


\begin{proof}
Suppose that $p\maps E \to B$ makes $B$ a quotient space of $E$.  By Prop.\ 
\ref{epi.mono}, $p$ is an epimorphism since $\u{p}$ is surjective.  
To show $p$ is a strong epimorphism, given any monomorphism $i\maps A\to X$ 
and morphisms $f,g$ making the outer square commute:
\[
\xymatrix{
E \ar[r]^{f}\ar@{-> >}[d]_{p} & A \ar@{ >->}[d]^{i}\\
B \ar[r]_{g}\ar@{-->}[ur]|-{{t}}   & X\\
}
\]
we need to prove there exists a unique $t\maps B \to A$ making the
whole diagram commute.  We do this by first constructing the
underlying function
\[      
\begin{array}{rcl}
     \u{t} \maps \u{B} &\to& \u{A}  \\
                   x &\mapsto& \u{f}(y)
\end{array}
\]
where $\u{p}(y) = x$.  The respective surjectivity and injectivity of
$\u{p}$ and $\u{i}$ guarantee that $\u{t}$ as just defined is the
unique function making the diagram of underlying sets commute.  To
show that $\u{t}$ induces a map of $\D$ spaces, we need to check that
the following naturality square commutes for every map $d \maps D' \to
D$ in $\D$:
\[
\xymatrix{
B(D) \ar[r]^{t_D}\ar@{-> >}[d]_{B(d)} & A(D) \ar@{ >->}[d]^{A(d)}\\
B(D') \ar[r]_{t_{D'}}   & A(D')\\
}
\]
Since $B$ and $A$ are concrete sheaves, we can check that this diagram
commutes at the level of underlying functions of plots.  Given a plot
$\varphi \in B(D)$, we examine its two images in $A(D')$.  First, we
have
\[      
\begin{array}{rcl}
     \u{A(d)(t_D(\varphi))} \maps \u{D'} &\to& \u{A}  \\
                   c &\mapsto& \u{f}(y)
\end{array}
\]
where $y \in \u{E}$ such that $\u{p}(y) = \u{\varphi}\u{d}(c)$.  The
second image has underlying function defined as follows:
\[      
\begin{array}{rcl}
     \u{t_{D'}(B(d)(\varphi))} \maps \u{D'} &\to& \u{A}  \\
                   c &\mapsto& \u{f}(y')
\end{array}
\]
where $y' \in \u{E}$ such that $\u{p}(y') = \u{\varphi}\u{d}(c)$.  We
are just left to check that given $y,y' \in
\u{p}^{-1}(\u{\varphi}\u{d}(c))$, then $\u{f}(y) = \u{f}(y')$.  This
follows from the commutativity $if = gp$ and that $\u{i}$ is
injective.

Conversely, let $p\maps E\to B$ be a strong epimorphism.  Consider the
concrete presheaf with plots $p_D(E(D))$ for every $D\in\D$.  By the
process of sheafification described in Section \ref{colimits} we obtain 
a $\D\Space$ which we will denote $\tilde{B}$.  We consider the
following commutative diagram:
\[
\xymatrix{
E\ar[r]^{\tilde{p}}\ar@{-> >}[d]_{p} & \tilde{B}\ar@{ >->}[d]\\
B\ar[r]_{1_B}\ar@{-->}[ur]|-{{t}} & B
}
\]
where $\tilde{p}$ has the same underlying function as $p$ and the
unlabeled arrow is the $\D\Space$ map induced by the identity function
on $\u{B}$.  Since $p$ is a strong epimorphism, there exists a unique
$t$ making the diagram commute.  It follows that $\u{t}=1_{\u{B}}$ and
that $B(D)\subseteq\tilde{B}(D)$ for every $D\in\D$.  Every plot
$\varphi \in B(D)$ can then be considered as a plot in $\tilde{B}(D)$,
which arise in two ways.  Either $\varphi$ came from a plot in $E(D)$,
in which case we consider the covering family with just the identity
map $(1 \maps D \to D)$, and there exists a plot $\hat\varphi \in
E(D)$ which maps to $\varphi$, or $\varphi$ arose from sheafification.
In the latter case, there exists a family $(f_i \maps D_i \to D | i
\in I)$ in $\D$ and a compatible collection of plots $\lbrace
\u{p}\u{\varphi_i} \in p_{D_i}(E(D_i)) | i \in I \rbrace$ each of
which is the restriction of $\u{\varphi}$.  We have shown that $p$
makes $B$ a quotient space of $E$.
\end{proof}

\begin{definition}\label{subobject.class}
In a category $\calc$ with finite limits, a {\bf weak subobject classifier} is 
an object $\Omega$ equipped with a morphism $\top\maps 1\to\Omega$ such that, 
given any strong monomorphism $i\maps C' \to C$ in $\calc$, there is a unique 
morphism $\chi_i\maps C\to\Omega$ making
\[
\xymatrix{
C' \ar@{ >->}[r]^{i}\ar[d]_{!} & C\ar[d]^{\chi_i} \\
1\ar@{ >->}[r]_{\top} & \Omega\\
}
\]
a pullback.
\end{definition}

To show $\D\Space$ has a weak subobject classifier for any $\D$ we
need to first show it has finite limits.  It is almost as easy to show it
has arbitrary limits.  For this, we use the
category $\Sh(\D)$ with sheaves on $\D$ as objects and natural
transformations between these as morphisms.  It is well-known
\cite{MM} that this category has
all limits, which can be computed `pointwise': if 
$F \maps C \to \Sh(\D)$ is any diagram of sheaves indexed by a
small category $C$, its limit exists and is given by:
\[ ( \lim F)(D) = \lim \, F(D) .\]

\begin{proposition} \label{limit.prop}
$\D\Space$ has all (small) limits, which may be computed pointwise.
\end{proposition}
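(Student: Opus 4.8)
The plan is to realize $\D\Space$ as a full subcategory of the sheaf category $\Sh(\D)$, compute the limit there using the pointwise formula quoted just above the proposition, and then verify that the result stays inside $\D\Space$ by checking concreteness. By Def.\ \ref{dspace} a generalized space is a concrete sheaf and a map of generalized spaces is just a natural transformation, so $\D\Space$ is a full subcategory of $\Sh(\D)$. Hence if the pointwise limit of a diagram of concrete sheaves is again concrete, the universal property in $\D\Space$ will follow for free.

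More precisely, first I would take a diagram $F \maps C \to \D\Space$ indexed by a small category $C$ and compose it with the inclusion into $\Sh(\D)$. By the cited fact, its limit $L = \lim F$ exists in $\Sh(\D)$ and is computed pointwise, $L(D) = \lim F(D)$, where the right-hand limit is taken in $\Set$. In particular the underlying set is $\u{L} = L(1) = \lim \u{F(c)}$, and for a morphism $d \maps 1 \to D$ the structure map $L(d)$ sends a compatible tuple $(\varphi_c)_{c}$ to the tuple $(\u{\varphi_c}(d))_{c}$, since $F(c)(d)(\varphi_c) = \u{\varphi_c}(d)$ by the definition of the underlying function of a plot.

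The crux is to show that $L$ is concrete, that is, for each $D$ distinct plots in $L(D)$ have distinct underlying functions $\u{D} \to \u{L}$. Suppose $(\varphi_c)_c$ and $(\psi_c)_c$ in $L(D)$ have the same underlying function. Unwinding the formula for $L(d)$ above, this means that for every $d \in \u{D}$ the tuples $(\u{\varphi_c}(d))_c$ and $(\u{\psi_c}(d))_c$ agree in $\lim \u{F(c)}$. Because a limit in $\Set$ is jointly monic over its projections, this forces $\u{\varphi_c}(d) = \u{\psi_c}(d)$ for every index $c$ and every $d$, so $\u{\varphi_c} = \u{\psi_c}$ for each $c$. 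Now concreteness of each sheaf $F(c)$ gives $\varphi_c = \psi_c$, hence $(\varphi_c)_c = (\psi_c)_c$. Thus $L$ is a concrete sheaf, i.e.\ an object of $\D\Space$.

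Finally, the universal property transfers without extra work. The projection cone $(\pi_c \maps L \to F(c))$ lives in $\D\Space$ because the inclusion into $\Sh(\D)$ is full; any competing cone out of an object of $\D\Space$ is also a cone in $\Sh(\D)$, so it factors uniquely through $L$ in $\Sh(\D)$, and fullness makes that mediating morphism a map of $\D$ spaces, with uniqueness inherited from $\Sh(\D)$. The main obstacle is precisely the concreteness check, but it reduces to the elementary fact that set-theoretic limits separate points through their projections, combined with the concreteness already assumed of each $F(c)$; everything else is inherited from $\Sh(\D)$, and the resulting $L$ realizes the limit pointwise as claimed.
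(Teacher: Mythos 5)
Your proposal is correct and follows essentially the same route as the paper: compute the limit pointwise in $\Sh(\D)$, verify that the resulting sheaf is concrete, and transfer the universal property back to $\D\Space$ using fullness of the inclusion. The only difference is cosmetic --- you check concreteness by an element chase on compatible tuples, whereas the paper phrases the same fact as the statement that a limit of injections (the components of the natural transformation $X(D) \to \u{X}^{\u{D}}$ over the diagram) is again an injection.
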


\begin{proof}
Let $F\maps C \to \D\Space$ be a diagram.  The limit of the
underlying diagram of sheaves can be computed pointwise; we will show 
that this limit has the concreteness property.  It will follow
that this limit is also the limit in $\D\Space$, since a morphism in 
$\Sh(D)$ between sheaves that happen to be objects of $\D\Space$ is 
automatically a morphism in $\D\Space$.  

For any domain $D$, the diagram $F \maps C \to \D\Space$ gives two 
diagrams of sets, namely its composites with the functors 
\[
\begin{array}{ccl}
\D\Space &\to&      \Set \\
X        &\mapsto&  X(D)  
\end{array}
\]
and
\[
\begin{array}{ccl}
\D\Space &\to&      \Set \\
X        &\mapsto&  \u{X}^{\u{D}} .  
\end{array}
\]
There is a natural transformation between these two diagrams
of sets, namely 
\[   \u{\; \;} \maps F(\alpha)(D) \to \u{F(\alpha)}^{\u{D}} \;. \]
Because each $F(\alpha)$ is a concrete sheaf, each component 
of this natural transformation is one-to-one.
So, taking the limits of both diagrams, we get a one-to-one function
\[       \lim_{\alpha \in C} \, F(\alpha)(D) \to 
         \lim_{\alpha \in C} \, \u{F(\alpha)}^{\u{D}} \]
which by the properties of limits can be reinterpreted 
as a one-to-one function 
\[       \lim_{\alpha \in C} \, F(\alpha)(D) \to 
         (\lim_{\alpha \in C} \u{F(\alpha)})^{\u{D}} \; .\]
Next, since $\u{F(\alpha)} = F(\alpha)(1)$ and limits of sheaves are
computed pointwise, we can reinterpret this as a one-to-one
function
\[       (\lim_{\alpha \in C} F(\alpha))(D) \to 
         (\u{\lim_{\alpha \in C} F(\alpha)})^{\u{D}}  \;.\]
One can check that this function is none other than 
\[    \u{\;\;} \maps (\lim_{\alpha \in C} F(\alpha))(D) \to 
         (\u{\lim_{\alpha \in C} F(\alpha)})^{\u{D}} \; .\]
Since this is one-to-one, the limit of $F$ is a concrete sheaf.
\end{proof}

It follows that the terminal object $1$ is the unique sheaf with exactly 
one plot for each object $D\in\D$.  

\begin{proposition}\label{subobject.prop}
For any concrete site $\D$, the category of $\D$ spaces has a weak subobject 
classifier $\Omega$.
\end{proposition}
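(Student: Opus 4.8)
The plan is to show that the $\D$ space $\Omega$ constructed in Prop.~\ref{omega}, with $\Omega(D) = 2^{\u{D}}$, serves as the weak subobject classifier. First I would equip it with the map $\top \maps 1 \To \Omega$ whose underlying function $\u{\top} \maps \u{1} \to 2$ sends the single point to $1$; concretely, $\top_D$ sends the unique plot in $1(D)$ to the function on $\u{D}$ that is constantly $1$, that is, to the full subset $\u{D} \in 2^{\u{D}}$. Naturality of $\top$ is immediate, since for $f \maps C \to D$ the map $\Omega(f)$ is precomposition with $\u{f}$, and the constant function $1$ on $\u{D}$ pulls back to the constant function $1$ on $\u{C}$.

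The conceptual heart of the argument is the observation that maps into $\Omega$ are the same thing as subsets of the underlying set. Precisely, I would show that for any $\D$ space $X$ a map $\chi \maps X \To \Omega$ is completely determined by, and may be freely prescribed by, its underlying function $\u{\chi} \maps \u{X} \to 2$, via the formula $(\chi)_D(\varphi) = \u{\chi}\,\u{\varphi}$ for each plot $\varphi \in X(D)$. The ``determined by'' direction follows from naturality of $\chi$ evaluated at points $d \maps 1 \to D$: since $\Omega(d)$ evaluates a function in $2^{\u{D}}$ at $d$, naturality forces $(\chi)_D(\varphi)(d) = \u{\chi}(\u{\varphi}(d))$ for every $d \in \u{D}$. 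That any prescribed $\u{\chi}$ conversely yields a genuine natural transformation into $\Omega$ is a routine check, using that underlying functions compose, $\u{\varphi f} = \u{\varphi}\,\u{f}$, together with the concreteness of $\Omega$.

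With this in hand, given a strong monomorphism $i \maps A \to X$ --- which by Prop.~\ref{strong.mono.prop} makes $A$ a subspace of $X$, and by Prop.~\ref{epi.mono} has injective $\u{i}$ --- I would define $\chi_i$ by taking $\u{\chi_i} \maps \u{X} \to 2$ to be the characteristic function of the subset $\u{i}(\u{A}) \subseteq \u{X}$. To verify that the resulting square is a pullback I would invoke Prop.~\ref{limit.prop}, which computes the pullback pointwise. At each domain $D$ the pullback of $\top$ along $\chi_i$ is the set of plots $\varphi \in X(D)$ for which $(\chi_i)_D(\varphi)$ is the full subset of $\u{D}$, that is, those $\varphi$ with $\u{\varphi}(\u{D}) \subseteq \u{i}(\u{A})$. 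The subspace property of Def.~\ref{subspace} says exactly that $i_D$ restricts to a bijection from $A(D)$ onto this set of plots, and the uniqueness clause there makes these bijections natural in $D$. Hence $i$ exhibits $A$ as the pullback.

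Finally, for uniqueness I would suppose $\chi'$ also makes the square a pullback and compute the pullback's underlying set by pointwise evaluation at $D = 1$; this set is $(\u{\chi'})^{-1}(1)$, and since the pullback is $A$ mapping in by $i$, it must equal $\u{i}(\u{A})$. This forces $\u{\chi'}$ to be the characteristic function of $\u{i}(\u{A})$, so $\u{\chi'} = \u{\chi_i}$, whence $\chi' = \chi_i$ by the key observation of the second paragraph. The main obstacle is really just the bookkeeping in that key observation that maps into $\Omega$ correspond to subsets of the underlying set; once it is cleanly established, both existence and uniqueness fall out of the pointwise computation of the pullback and the subspace characterization of strong monomorphisms.
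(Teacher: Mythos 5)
Your proposal is correct, and it picks the same $\Omega$, the same $\top$, and the same characteristic map $\chi_i$ (the indicator function of $\u{i}(\u{A})$) as the paper; the difference lies in how the pullback square is verified and in how much you actually prove. The paper checks the universal property directly: given a cone $g \maps Q \to X$, $! \maps Q \to 1$ over the square, it lifts $\u{g}$ through the injection $\u{i}$ to get $\u{h} \maps \u{Q} \to \u{A}$ and then appeals to the subspace characterization of strong monomorphisms (Prop.~\ref{strong.mono.prop}) to see that $h$ is a map of $\D$ spaces. You instead compute the pullback pointwise via Prop.~\ref{limit.prop}, identify $P(D)$ with the set of plots $\varphi \in X(D)$ whose underlying functions land in $\u{i}(\u{A})$, and use Def.~\ref{subspace} to match this set bijectively with $A(D)$; the two routes are essentially equivalent in content, though yours makes the identification of $A$ with the pullback more explicit. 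What your write-up adds --- and what the paper's proof omits entirely --- is the uniqueness clause of Def.~\ref{subobject.class}: your lemma that maps $X \To \Omega$ biject with functions $\u{X} \to 2$ (forced by naturality at points $d \maps 1 \to D$ together with concreteness of $\Omega$), combined with evaluating the pullback at $D = 1$, pins down $\u{\chi'} = \u{\chi_i}$ and hence $\chi' = \chi_i$. That is a genuine completion of the argument. The only cosmetic quibble is that the naturality of your bijections $A(D) \cong P(D)$ comes simply from $i$ being a natural transformation with restricted codomain, not from the uniqueness clause of Def.~\ref{subspace} as you suggest; this does not affect the validity of the proof.
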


\begin{proof}
We define $\Omega$ as in Prop.\ \ref{omega}, and define the map
$\top\maps 1\to\Omega$ as the constant map to $1\in \{0,1\} =
\u{\Omega}$.  Given a strong monomorphism $i\maps A \to X$ of $\D$
spaces we define its characteristic map $\chi_i\maps X\to\Omega$ 
to have the underlying function $\u{\chi}_i$ given by
$\u{\chi}_i(x)=1$ if $x$ is in the image of $\u{i}$ and $\u{\chi}_i(x)=0$
otherwise.  We need to check that this definition makes 
$A$ a pullback.  Consider a $\D$ space $Q$ with maps making this
diagram commute:
\[
\xymatrix{
  & Q\ar@/_/[ddr]_{!}\ar@/^/[rrd]^{g} & & \\
  & & A \ar[r]^{i}\ar[d]_{!} & X\ar[d]^{\chi_i} & \\
  & & 1\ar[r]_{\top} & \Omega &\\
}
\]
We need to show there exists a unique function $\u{h}\maps \u{Q}\to \u{A}$ 
defining a map of $\D$ spaces $h\maps Q\to A$.   Since the outer edges of the 
diagram commute we can define $\u{h}$ such that $\u{m}\u{h}=\u{g}$.  Since $A$ 
is a subspace of $X$ it is clear that $h$ is a map of 
$\D$ spaces.
\end{proof}

\subsection{Parametrized Mapping Spaces}

We next turn to the existence of parametrized mapping spaces between
$\D$ spaces over a fixed base $B$.

\begin{definition}\label{slice.cat}
Given an object $B$ in a category $\calc$, the category of
{\bf objects over $B$} (sometimes called the {\bf slice category}
of $B$), has morphisms $f\maps E\to B$ in $\calc$ as objects and 
commuting triangles
\[
\xymatrix{
E\ar[rr]^{h}\ar[dr]_{f} && E^\prime\ar[dl]^{g}\\
& B &
}
\]
as morphisms.  We denote this category by $\calc_B$.
\end{definition}

We can think of these as `bundles' over $B$, in a very general
sense, not assuming any sort of local triviality.  The product in the 
category of objects over $B$ is given by the pullback:
\[
\xymatrix{
& E\times_B E^\prime\ar[dl]\ar[dr] & \\
E\ar[dr]_{f} & & E^\prime\ar[dl]^{g}\\
& B &\\
}
\]

\begin{definition}\label{cart.closed}
A category $\calc$ is {\bf cartesian closed} if it has finite products
and for every object $Y \in \calc$, the functor 
\[  - \times Y \maps \calc \to \calc \]
has a right adjoint, called the {\bf internal hom} and denoted
\[  \calc(Y, -) \maps \calc \to \calc  .\]
\end{definition}

\noindent 
The fact that $\calc(X, -)$ is right adjoint to $- \times X$ means
that we have a natural bijection of sets
\[     \hom(X \times Y, Z) \cong \hom(X, \calc(Y,Z))  \]
but a standard argument shows that we also have a natural isomorphism 
in $\calc$:
\[     \calc(X \times Y, Z) \cong \calc(X, \calc(Y,Z)) .  \]

\begin{definition}\label{loc.cart.closed}
A category $\calc$ is called {\bf locally cartesian closed} if for every 
$B\in\calc$, the category $\calc_B$ of objects over $B$ is cartesian closed.
Given objects $X,Y$ over $B$, we call the internal hom $\calc_B(X,Y)$ a 
{\bf parametrized mapping space}.
\end{definition}

We want to show that the category of $\D$ spaces is locally cartesian closed.  
To do this we need to determine the product and internal $\hom$ in the 
category of $\D$ spaces over some $\D$ space $B$.  Given two spaces over $B$,
\[
\xymatrix{
X\ar[dr]_{p_X} && Y\ar[dl]^{p_Y}\\
& B &
}
\]
the product is given by the pullback $X\times_B Y$ in the category of $\D$ 
spaces.  It is easily checked that the universal property holds by considering 
the universal property of the pullback.  Alternatively, the $\D$ space 
structure on $X\times_B Y$ can be quickly obtained by the following lemma:

\begin{lemma}
The monomorphism $m\maps X\times_A Y\to X\times Y$ in $\D\Space$ given by 
inclusion of sets $\u{X\times_A Y}\hookrightarrow \u{X\times Y}$ is a 
strong monomorphism.
\end{lemma}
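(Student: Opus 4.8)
The plan is to reduce the claim to the characterization of strong monomorphisms in Prop.\ \ref{strong.mono.prop}: there it is shown that a map of $\D$ spaces is a strong monomorphism exactly when it makes its source a subspace of its target, in the sense of Def.\ \ref{subspace}. So I would verify that $m$ makes $X\times_A Y$ a subspace of $X\times Y$. First note that $m$ really is monic, since its underlying function is the injection $\u{X\times_A Y}\hookrightarrow\u{X\times Y}$ and Prop.\ \ref{epi.mono} detects monomorphisms on underlying functions.

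Next I would record the pointwise descriptions supplied by Prop.\ \ref{limit.prop}, which says limits in $\D\Space$ are computed pointwise. For each domain $D$ we have $(X\times Y)(D)=X(D)\times Y(D)$, so a plot in $X\times Y$ with domain $D$ is a pair $\varphi=(\varphi_X,\varphi_Y)$ with $\varphi_X\in X(D)$ and $\varphi_Y\in Y(D)$; similarly $(X\times_A Y)(D)$ is the subset of those pairs with $(p_X)_D(\varphi_X)=(p_Y)_D(\varphi_Y)$ in $A(D)$, where $p_X\maps X\to A$ and $p_Y\maps Y\to A$ are the structure maps. In particular $m$ acts pointwise as the inclusion of this subset, and (again since $\u{\;\;}=(-)(1)$ and limits are pointwise) its underlying set $\u{X\times_A Y}$ is exactly $\{(x,y)\in\u{X}\times\u{Y}:\u{p_X}(x)=\u{p_Y}(y)\}$.

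To check Def.\ \ref{subspace}, I would take any plot $\varphi=(\varphi_X,\varphi_Y)\in(X\times Y)(D)$ with $\u{\varphi}(\u{D})\subseteq\u{m}(\u{X\times_A Y})$ and produce the required lift. The image condition says precisely that $\u{p_X}\,\u{\varphi_X}(d)=\u{p_Y}\,\u{\varphi_Y}(d)$ for every $d\in\u{D}$. By naturality of the map sending a plot to its underlying function, $\u{p_X}\,\u{\varphi_X}$ and $\u{p_Y}\,\u{\varphi_Y}$ are the underlying functions of the plots $(p_X)_D(\varphi_X)$ and $(p_Y)_D(\varphi_Y)$ in $A(D)$. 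The key move is now to invoke concreteness of the base: since $A$ is a concrete sheaf, two plots in $A(D)$ with equal underlying functions are equal, so $(p_X)_D(\varphi_X)=(p_Y)_D(\varphi_Y)$. Hence $\psi:=(\varphi_X,\varphi_Y)$ already lies in $(X\times_A Y)(D)$, we have $m_D(\psi)=\varphi$, and $\psi$ is unique because $m$ is monic. By Prop.\ \ref{strong.mono.prop} this shows $m$ is a strong monomorphism.

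I expect the only genuinely load-bearing step to be this last one, where concreteness of $A$ upgrades an equality of underlying functions on $\u{D}$ to an equality of plots in $A(D)$; everything else is bookkeeping with the pointwise formulas for the product and the pullback. It is worth keeping straight that the hypothesis of Def.\ \ref{subspace} is phrased on underlying functions, whereas membership in $(X\times_A Y)(D)$ is a plot-level condition, so the concreteness of the base is exactly what bridges the two.
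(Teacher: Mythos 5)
Your proof is correct and follows essentially the same route as the paper's: both verify the subspace condition of Def.~\ref{subspace} for $m$ and conclude via Prop.~\ref{strong.mono.prop}. The paper's version is terser --- it simply asserts that the relevant square of plots over $A(C)$ commutes --- whereas you correctly make explicit that concreteness of $A$ is the step that upgrades the equality of underlying functions $\u{p_X}\,\u{\varphi_X}=\u{p_Y}\,\u{\varphi_Y}$ to an equality of plots in $A(C)$.
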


\begin{proof}
Given $C\in\D$, then for any $\varphi\in(X\times Y)(C)$ such that 
$\u{\varphi}(\u{C})\subseteq\u{X\times_A Y}$, the following diagram commutes:
\[\xymatrix{
& (X\times Y)(C)\ar[dl]_{{p_X}_C}\ar[dr]^{{p_Y}_C} &\\
X(C)\ar[dr] & & Y(C)\ar[dl]\\
& A(C) &
}
\]
and thus $\varphi\in(X\times_A Y)(C)\subseteq(X\times Y)(C)$.
\end{proof}

\begin{proposition}\label{loc.cart.prop}
For any concrete site $\D$, the category of $\D$ spaces is locally 
cartesian closed.
\end{proposition}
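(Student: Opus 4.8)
The plan is to verify Definition \ref{loc.cart.closed} directly: for each object $B$ I will show that the slice category $\D\Space_B$ is cartesian closed. As noted above, the product in $\D\Space_B$ is the fibered product $X \times_B Y$ (which is again a $\D$ space by the preceding lemma), so it suffices to produce, for each fixed $\D$ space $Y$ over $B$ with projection $q \maps Y \to B$, a right adjoint to the functor $- \times_B Y \maps \D\Space_B \to \D\Space_B$. Given a second $\D$ space $Z$ over $B$, the candidate internal hom $P = \D\Space_B(Y,Z)$ is forced by the desired adjunction together with the Yoneda lemma: writing $\hom(-,D)$ for the representable $\D$ space on a domain $D$ (a $\D$ space by Prop.\ \ref{representable.prop}), I will \emph{define}
\[
   P(D) = \bigsqcup_{\beta \in B(D)}
      \hom_{\D\Space_B}\!\bigl( (\hom(-,D),\beta) \times_B Y,\; Z \bigr),
\]
where $(\hom(-,D),\beta)$ denotes the representable regarded as a $\D$ space over $B$ via the map classified by $\beta \in B(D)$, and the pullback is formed against $q$. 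The restriction maps come from precomposing with the evident map of pullbacks induced by $\hom(-,f)$, and the projection $\pi \maps P \to B$ sends $(\beta,g)$ to $\beta$. Evaluated at the terminal object this recovers the expected underlying set $\u P = \bigsqcup_{b \in \u B} \hom_{\D\Space}(Y_b, Z_b)$, since $(\hom(-,1),b) \times_B Y$ is the fiber $Y_b$.

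Next I would check that $P$ is a concrete sheaf, so that it lies in $\D\Space$. For the sheaf condition, take a covering family $(f_i \maps D_i \to D)$ and a compatible family $(\beta_i, g_i) \in P(D_i)$. The base components $\beta_i$ glue to a unique $\beta \in B(D)$ because $B$ is a sheaf; and the coverage axiom shows that the pullbacks $(\hom(-,D_i),\beta_i) \times_B Y$ form a cover of $W := (\hom(-,D),\beta)\times_B Y$ in $\Sh(\D)$. Since covers are effective epimorphisms in the sheaf topos, $W$ is the colimit of the associated \v{C}ech diagram, so the compatible maps $g_i$ glue to a unique $g \maps W \to Z$ over $B$, giving the required unique plot $(\beta,g) \in P(D)$. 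For concreteness I would use that the underlying set of a pullback is the pullback of underlying sets, so that a plot $(\beta,g)$ has underlying function recording exactly the values $\u g(d,y)$ for all compatible pairs $(d,y) \in \u D \times_{\u B} \u Y$; since $Z$ is concrete, $g$ is determined by $\u g$, hence by the underlying function of $(\beta,g)$, and $P$ is concrete.

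With $P$ in hand I would establish the bijection
\[
   \hom_{\D\Space_B}(X \times_B Y, Z) \;\cong\; \hom_{\D\Space_B}(X, P),
\]
natural in $X$ over $B$. In one direction, a map $\Phi \maps X \times_B Y \to Z$ over $B$ is sent to the map $X \to P$ whose value on a plot $\xi \in X(D)$ is the pair $(p_D\xi,\, \Phi \circ (\hat\xi \times_B \id_Y))$, where $\hat\xi \maps (\hom(-,D),p_D\xi) \to X$ is the map over $B$ corresponding to $\xi$ by Yoneda. In the other direction, a map $h \maps X \to P$ over $B$ is sent to the map $X \times_B Y \to Z$ whose value on $(\xi,\eta) \in (X\times_B Y)(D)$ is $g_{\xi,D}(\id_D,\eta)$, where $h_D(\xi) = (p_D\xi, g_\xi)$. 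These assignments are visibly natural in $X$, and a routine Yoneda computation shows they are mutually inverse; representability for each $Z$ then yields the right adjoint. Exhibiting such an adjoint for every $Y$ makes $\D\Space_B$ cartesian closed, and as $B$ was arbitrary, $\D\Space$ is locally cartesian closed; the special case $B = 1$ recovers cartesian closedness.

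The main obstacle is the construction and verification of $P$ as a \emph{concrete sheaf over $B$}, rather than the formal adjunction, which is pure Yoneda bookkeeping once $P$ exists. Specifically, the delicate point is the sheaf condition: gluing the fiberwise-over-base maps $g_i$ requires knowing that the representable pullbacks $(\hom(-,D_i),\beta_i)\times_B Y$ genuinely cover $W$ and that this cover is effective, which is where the coverage axiom and the topos-theoretic behavior of $\Sh(\D)$ enter. Concreteness, and the fact that the glued map again lies over $B$, are then comparatively routine, but must be checked to ensure $P$ really is an object of $\D\Space_B$.
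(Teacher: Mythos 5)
Your argument is essentially correct, but it takes a genuinely different route from the paper's. The paper builds the parametrized mapping space ``concretely'': it first declares the underlying set to be $\coprod_{b\in\u{B}}\u{\cald}(X_b,Y_b)$, then defines a function $\u{\varphi}\maps\u{C}\to\u{\cald}_B(X,Y)$ to be a plot when its composite with the projection and the associated evaluation map $\u{C}\times_{\u{B}}\u{X}\to\u{Y}$ each underlie maps of $\D$ spaces; the sheaf condition is then verified by an explicit refinement argument with covering families, and the adjunction and its naturality are checked by hand at the level of underlying functions. You instead take the Yoneda-forced slice-topos formula $P(D)=\coprod_{\beta\in B(D)}\hom_{/B}(\hom(-,D)\times_B Y,Z)$, verify concreteness, and outsource the sheaf condition to two standard facts about $\Sh(\D)$: that a covering family induces a jointly effectively epimorphic family of representables (so $\hom(-,D)$ is the colimit of the associated \v{C}ech diagram), and that such families are stable under pullback along $\hom(-,D)\times_B Y\to\hom(-,D)$. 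The two constructions produce the same object --- concreteness of $B$ and faithfulness of the underlying-set functor (Prop.\ \ref{faithful.prop}) identify a pair $(\beta,g)$ with the function $\u{D}\to\coprod_b\u{\cald}(X_b,Y_b)$ that the paper takes as primitive --- and your concreteness check (reduce to $\u{g}$ on $\u{D}\times_{\u{B}}\u{Y}$ and use concreteness of $Z$) is the right one. What your route buys is brevity and a formal, Yoneda-driven adjunction; what it costs is reliance on topos-theoretic inputs (effective epimorphisms, \v{C}ech colimits, pullback-stability) that the paper, aiming to be self-contained for differential geometers, never develops and would need to be cited from Mac Lane--Moerdijk or proved. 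If you intend the argument to stand inside this paper, you should either supply those two facts or replace the gluing step by the paper's explicit compatible-family computation; as a piece of mathematics, though, your proof is sound.
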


\begin{proof}
To keep our notation terse, we write $\cald$ for $\D\Space$ in what
follows.  So, given a $\D$ space $B$, $\cald_B$ stands for the 
category of $\D$ spaces over $B$, and given $X,Y \in \cald_B$ we will
denote their parametrized mapping space by $\cald_B(X,Y)$.

Of course, we need to prove that this internal hom exists.
To describe it, we start by describing its underlying set,
which we denote by $\u{\cald}_B(X,Y)$.
First, suppose $p \maps X \to B$ is any $\D$ space over $B$.  
Given a point $b \in \u{B}$, let the {\bf fiber of $X$ 
over $b$}, denoted $X_b$, be the set 
\[         \u{p}^{-1}(b) \subseteq \u{X} \]
with the unique $\D$ space structure for which the inclusion $i \maps X_b \to X$
makes $X_b$ into a subspace of $X$.  
Then, we have 
\[         \u{\cald}_B(X,Y) = \coprod_{b \in \u{B}} \u{\cald}(X_b,Y_b) ,\]
where $\u{\cald}(X_b,Y_b)$ is the underlying set of $\cald(X_b,Y_b)$. 
This set sits over the set $\u{B}$ in an obvious way, which we denote as
\[         q \maps \u{\cald}_B(X,Y) \to \u{B}  .\]

Next we describe the plots for $\cald_B(X,Y)$.  Given $C \in \D$ and
function $\u{\varphi}\maps \u{C}\to \u{\cald}_B(X,Y)$, we need to say
when this is the function underlying a plot with domain $C$.  We can
consider $C$ as a representable $\D$ space (see Prop.\
\ref{representable.prop}).  By composing $\u{\varphi}$ with $q$ we see
that $\u{C}$ is a set over $\u{B}$.  We say that $\u{\varphi}$
determines a plot for $\cald_B(X,Y)$ if this composite function
$q\u{\varphi}$ underlies a map of $\D$ spaces and
\[
\u{C}\times_{\u{B}} \u{X}\stackrel{\u{\varphi}\times_{\u B} 1}\longrightarrow 
\coprod_{b\in \u{B}} \u{\cald}(X_b,Y_b)\times_{\u B} \u{X}\stackrel{\u{\ev}}
\longrightarrow \u{Y}
\]
underlies a map of $\D$ spaces, where $\u{\ev}$ is the evaluation map.  

We need to check that these plots for $\cald_B(X,Y)$, indeed describe
a sheaf.  Let $(f_i \maps C_i \to C | i \in I)$ be a covering family
with a compatible collection of plots $\lbrace \u{\varphi_i} \maps
\u{C_i} \to \u{C} \rbrace$.  From this collection we can define a
function $\u{\varphi} \maps \u{C} \to \u{\cald}_B(X,Y)$.  It is clear
since $B$ is a sheaf, that $\varphi$ satisfies the first condition of
being a plot for $\cald_B(X,Y)$.  We then just need to check that

\[
\u{C} \times_{\u{B}} \u{X} \stackrel{\u{\varphi} \times_{\u B} 1}
\longrightarrow \u{\cald}_B(X,Y) \times_{\u B} \u{X}
\stackrel{\u{\ev}}\longrightarrow \u{Y} 
\]
underlies a map of smooth spaces.

Let $\psi \maps C' \to C \times_B X$ be a plot.
Projecting out of the first coordinate, we obtain a map $\pi_1\psi
\maps C' \to C$ in $\D$.  Recalling the property in the definition of
a covering family, corresponding to the family covering $C$ and the
map $\pi_1\psi \maps C' \to C$, there exists a covering family $(g_j
\maps D_j \to C' | j \in J)$ with the following property: for each $j
\in J$, there exists a map $h_{ij} \maps D_j \to C_i$ for some $i \in
I$, such that the following diagram commutes:
\[
\xymatrix{
C' \ar[r]^{\psi} & C \times_B X \ar[r]^{\pi_1} & C \\
D_j \ar[u]^{g_j} \ar[rr]_{h_{ij}} & & C_i \ar[u]_{f_i}
}
\]
We define for each $j \in J$, a function $\u{\tau_{ij}} \maps \u{D_j}
\to \u{C_i} \times_{\u{B}} \u{X}$ by $\u{\tau_{ij}}(d) =
(\u{h_{ij}}(d),\u{\pi_2\psi g_j}(d))$.  These functions
$\u{\tau_{ij}}$ will be plots if the projection to each component is a
plot.  This is clear since $h_{ij}$ is a map in $\D$ and $\psi g_j$ is
a plot in $C \times_B X$.  We consider the following diagram for each
$j \in J$:
\[
\xymatrix{
D_j \ar[r]^{g_j} \ar[drr]_{\tau_{ij}} & C' \ar[r]^{\psi} & C \times_B
                 X \ar[r]^<<<<<{\varphi \times_B 1} & \cald_B(X,Y)
                 \times_B X \ar[r]^>>>>>>{\ev} & Y \\ & & C_i \times_B X
                 \ar[ur]_{\varphi_i \times_B 1} & & }
\]
It is easy to check that this diagram commutes since for $d \in \u{D}$, 
\[
 (\u{\varphi_i h_{ij}}(d),\u{\pi_2\psi g_j}(d)) = 
(\u{\varphi\pi_1\psi  g_j}(d),\u{\pi_2\psi g_j}(d)), 
\]
which follows from $\varphi\pi_1\psi g_j = \varphi f_i h_{ij} =
\varphi_i h_{ij}$.  It follows that for each $j \in J$, that
\[
 D_j \stackrel{g_j}\longrightarrow C' \stackrel{\psi}\longrightarrow C
 \times_B X \stackrel{\varphi \times_B 1}\longrightarrow
 \cald_B(X,Y) \times_B X \stackrel{\ev}\longrightarrow Y 
\]
is a plot.

We now check that $\lbrace \ev(\varphi \times_B 1_X)\psi g_j \in Y(D_j)
\rbrace_{j\in J}$ is a compatible collection of plots corresponding to
the covering family $(g_j \maps D_j \to C' | j \in J)$.  Let $d \in
\u{g_j}(\u{D_j}) \cap \u{g_k}(\u{D_k})$ with $e_j \in \u{D_j}$, $e_k
\in \u{D_k}$ such that $\u{g_j}(e_j) = d = \u{g_k}(e_k)$.  We have
\[
 \u{\ev(\varphi \times_B 1_X)\psi g_j}(e_j) = \u{(\varphi_i \times_B
 1_X)\tau_{ij}}(e_j) = (\u{\varphi_i h_{ij}}(e_j),\u{\pi_2\psi
 g_j}(e_j)) 
\]
and
\[
 \u{\ev(\varphi \times_B 1_X)\psi g_k}(e_k) = \u{(\varphi_l \times_B
 1_X)\tau_{lk}}(e_k) = (\u{\varphi_l h_{lk}}(e_k),\u{\pi_2\psi
 g_k}(e_k)). 
\]
We need to show equality of the rightmost terms.  The second
components are clearly equal.  The equality of the first components
follows from $\u{f_i h_{ij}}(e_j) = \u{f_l h_{lk}}(e_k)$, and that the
family of plots $\lbrace \varphi_i \in \cald_B(X,Y)(C_i) | i \in I
\rbrace$ is compatible.  Since $Y$ is a smooth space, we have by the
sheaf condition that $\ev(\varphi \times_B 1_X)\psi$ is a plot.  We
have now shown that
\[
 C \times_B X \stackrel{\varphi \times_B 1}\longrightarrow
 \cald_B(X,Y) \times_B X \stackrel{\ev}\longrightarrow Y 
\]
is a smooth map.  It follows that $\cald_B(X,Y)$ is a sheaf.

Given $\D$ spaces $X$,$Y$,$Z$ over $B$ with projections
$p_X$,$p_Y$, and $p_Z$ respectively, we define a bijective
correspondence between functions of the form
\[f\maps \u{Z\times_B X}\to \u{Y}\] 
and
\[\tilde{f}\maps \u{Z}\to \coprod_{b\in \u{B}}\u{\cald}(X_b,Y_b).\]
Given $f$ we define $\tilde{f}$ in the following way: given $z\in
\u{Z}$ with $\u{p_Z}(z)=b\in \u{B}$, if $X_b$ is
empty, then $\tilde{f}(z)$ is defined to be the unique map $!\maps
\emptyset\to Y_b$.  Otherwise, $\tilde{f}(z):=f(z,-)\maps
X_b\to Y_b$.  Starting with a map
$\tilde{f}\maps \u{Z}\to \coprod_{b\in
\u{B}}\u{\cald}(X_b,Y_b)$, the map $f\maps
\u{Z} \times_{\u{B}} \u{X}\to \u{Y}$ is defined by $f(z,x):= \tilde{f}(z)(x)$.

Next we must show that $f$ defines a map of $\D$ spaces if and only if 
$\tilde{f}$ defines a map of $\D$ spaces, and that this correspondence 
is natural.

First, let us show that if we have a map of $\D$ spaces $f\maps
Z\times_B X\to Y$, then the function $\tilde{\u{f}}\maps \u{Z}\to
\coprod_{b\in \u{B}}\u{\cald}(X_b,Y_b)$
constructed above determines a map of $\D$ spaces.  Given an object
$C\in\D$ and a plot of $Z(C)$, $\u{\varphi}\maps \u{C}\to
\u{Z}$, we treat $C$ as a $\D$ space over $B$ and obtain a function 
$\u{\varphi}\times_{\u{B}} 1 \maps \u{C}\times_{\u{B}} \u{X} \to 
\u{Z}\times_{\u{B}} \u{X}$.  Since $\u{\varphi}$ is a plot, this
determines a map of $\D$ spaces.  One can check that the following
diagram commutes and it follows that $\tilde{f}$ determines a
$\D\Space$ map:
\[
 \def\objectstyle{\scriptstyle}
  \def\labelstyle{\scriptstyle}
   \xy
 (-40,0)*{\u{C}\times_{\u{B}} \u{X}}="1"; 
 (-15,0)*{\u{Z}\times_{\u{B}} \u{X}}="2";
 (15,0)*{\coprod\u{\cald}(X_b,Y_b)\times_{\u{B}} \u{X}}="3"; 
 (40,0)*{\u{Y}}="4";
{\ar^{\u{\varphi}\times_{\u{B}} 1}  "1";"2"};
{\ar^>>>>>>>>>{\u{\tilde{f}}\times_{\u{B}} 1} "2";"3"};
{\ar^>>>>>>>>{\u{\ev}} "3";"4"};
{\ar@/_2pc/_{\u{f}}  "2";"4"};
\endxy
\\ \\
\]
Conversely, given a $\D\Space$ map $\tilde{f}\maps Z\to
\coprod_{b\in \u{B}}\cald(X_b,Y_b)$ and a plot $\u{\varphi}\maps
\u{C}\to\u{Z} \times_{\u{B}} \u{X}$, the composite along the top of the
following diagram is a plot of $Y$ and the commutativity of the
diagram implies that the function $f\maps \u{Z} \times_{\u{B}} \u{X}\to \u{Y}$
defines a $\D\Space$ map:
$$ \def\objectstyle{\scriptstyle}
  \def\labelstyle{\scriptstyle}
   \xy
 (-40,0)*{\u{C}}="1"; 
 (-15,0)*{\u{Z} \times_{\u{B}} \u{X}}="2";
 (15,0)*{\coprod\u{\cald}(X_b,Y_b)\times_{\u{B}} \u{X}}="3"; 
 (40,0)*{\u{Y}}="4";
{\ar^{\u{\varphi}}  "1";"2"};
{\ar^>>>>>>>>>{\tilde{\u{f}}\times_{\u{B}} 1} "2";"3"};
{\ar^>>>>>>>>{\u{\ev}} "3";"4"};
{\ar@/_2pc/_{f}  "2";"4"};
\endxy
\\ \\
$$
To check naturality in the $Z$ variable, we consider a $\D\Space$ map 
$g\maps Z'\to Z$ and ask that the following diagram commutes.  We note that 
we only need to check the commutativity of the functions of the underlying sets.
\[
\xymatrix{
\cald(Z,\coprod \cald(X_b,Y_b))\ar[r]\ar[d] & \cald(Z\times_B X,Y)\ar[d]\\
\cald(Z',\coprod \cald(X_b,Y_b))\ar[r] & \cald(Z'\times_B X,Y)
}
\]
Let $f\maps Z\to \coprod_{b\in \u{B}}\cald(X_b,Y_b)$ 
be an element in the top left corner.  Following the diagram down 
and right, we first obtain a map $fg\maps Z'\to\coprod_{b\in
\u{B}}\cald(X_b,Y_b)$ and then a map $\widetilde{fg}\maps Z'\times_B X\to Y$.  
Given $(z',x)\in Z'\times_B X$, we see that $\widetilde{fg}(z',x)=\tilde{f}(g(z'),x)$.  

Following the diagram the other way, $f$ is first taken to $\tilde{f}$,
and then $\tilde{f}$ is taken to a map from $Z'\times_B X$ to $Y$ by the
pullback of $Z'$ and $X$.  This induces a map $Z'\times_B X$ to
$Z\times_B X$ which we compose with $\tilde{f}$ to obtain the desired
map.  It follows that $(z',x)\in Z'\times_B X\mapsto (g(z'),x)\in
Z\times_B X\mapsto \tilde{f}(g(z'),x)\in Y$.  Thus the diagram
commutes.  Given a $\D\Space$ map $h\maps Y \to Y'$ the commutativity
of the following diagram is given by composing at each step with $h$
in the appropriate manner:

\[
\xymatrix{
\cald(Z,\coprod \cald(X_b,Y_b))\ar[r]\ar[d] & \cald(Z\times_B X,Y)\ar[d]\\
\cald(Z,\coprod \cald(X_b,{Y'}_b))\ar[r] & \cald(Z\times_B X,Y')
}
\]
It follows that the correspondence is natural in $Y$, and thus that
$\D\Space$ is locally cartesian closed.
\end{proof}

\subsection{Colimits}
\label{colimits}

In Prop.\ \ref{limit.prop} we showed that the category of $\D$ spaces has 
limits, which can be computed pointwise.  To compute colimits in $\D\Space$, 
we need some facts about `sheafification' and also `concretization'. 

Given any site $\D$, sheafification is a functor 
that takes presheaves on $\D$ to sheaves on $\D$, but does not affect 
presheaves that are already sheaves \cite{MM}.  We denote this functor by 
\[       S\maps \Set^{\D^\op}\to \Sh(\D)  ,\]
where $\Set^{\D^\op}$ is the category of presheaves on $\D$ and $\Sh(\D)$ 
is the category of sheaves on $\D$.  (In both these categories, the morphisms
are just natural transformations.)  The functor $S$ is left adjoint to 
the inclusion
\[        I \maps \Sh(\D)\to\Set^{\D^\op}  .\]
Since $S$ is a left adjoint, it preserves colimits.  So, to compute a
colimit of sheaves we can compute the colimit of
their underlying presheaves as objects in $\Set^{\D^\op}$
and then sheafify the result.

Grothendieck's `plus construction' \cite{MM} gives an explicit
recipe for sheafification.  Given a presheaf $X$, the plus 
construction gives a new presheaf $X^+$ by taking the colimit over 
covering families of compatible collections for those families:
\[X^+(C) = {{\rm colim}}_{R\in\mathcal{J}(C)}\, \mathcal{F}(R,X)\]
where $\mathcal{F}(R,X)$ is the set of compatible collections for the
covering family $R$ of $C \in \D$.  Applying the plus construction to
any presheaf gives a {\bf separated} presheaf, which is like a sheaf except
that the existence property in Def.\ \ref{sheaf} is dropped, and only 
uniqueness is required.  Applying the plus construction to any separated
presheaf gives a sheaf.  So, if $X$ is a presheaf, $X^{++}$ is a sheaf ---
and in fact it is the sheafification of $X$.

Next we turn to concretization, which makes presheaves
`concrete': 

\begin{definition}  Given a concrete site $\D$, we say
a presheaf $X \maps \D^\op \to \Set$ is {\bf concrete} if for every
object $D \in \D$, the function sending plots $\varphi \in X(D)$ to
functions $\u{\varphi} \maps \hom(1,D) \to X(1)$ is one-to-one.  We
denote the category of concrete presheaves on $\D$ and natural
transformations between these by $\Conc(\Set^{\D^\op})$.
\end{definition}

\noindent
For any presheaf $X$ on $\D$ there is a
concrete presheaf $L(X)$ for which $L(X)(D)$ consists of equivalence
classes of plots $\varphi \in X(D)$, where $\varphi \sim \varphi'$ if
and only if $\u{\varphi} = \u{\varphi}'$.  Since these equivalence
classes can be identified with functions $\u{D}\to \u{X}$, the image 
of $L$ on a morphism $f\maps X\to Y$ is completely determined by the function 
$\u{f}\maps \u{X}\to \u{Y}$.  It follows that $L$ preserves identities 
and composition.  So, we obtain a functor called \textbf{concretization}:
\[  L \maps \Set^{\D^\op} \to \Conc(\Set^{\D^\op})  .\]
On the other hand, there is an obvious inclusion functor 
\[   R\maps \Conc(\Set^{\D^\op}) \to \Set^{\D^\op} .\]

\begin{lemma}\label{adjoint.lem}
$L$ is left adjoint to $R$.
\end{lemma}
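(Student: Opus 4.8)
The plan is to establish the defining natural bijection of an adjunction,
\[ \hom_{\Conc(\Set^{\D^\op})}(L(X), Y) \iso \hom_{\Set^{\D^\op}}(X, R(Y)), \]
for every presheaf $X$ on $\D$ and every concrete presheaf $Y$. The cleanest route is to exhibit the unit. For each presheaf $X$ there is an evident natural transformation $\eta_X \maps X \To R(L(X))$ whose component $\eta_{X,D} \maps X(D) \to L(X)(D)$ sends a plot $\varphi$ to its equivalence class $[\varphi]$ under the relation $\varphi \sim \varphi'$ iff $\u{\varphi} = \u{\varphi'}$. Naturality of $\eta_X$ is immediate, since the structure maps of $L(X)$ are by construction the ones induced by those of $X$ on equivalence classes. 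I would then show that $\eta_X$ is universal among maps out of $X$ into (the inclusion of) a concrete presheaf, which is exactly the statement that $L \dashv R$.

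So fix a concrete presheaf $Y$ and a morphism $f \maps X \To R(Y)$ in $\Set^{\D^\op}$. I want a unique $\bar{f} \maps L(X) \To Y$ in $\Conc(\Set^{\D^\op})$ with $R(\bar{f}) \circ \eta_X = f$. The triangle forces the formula $\bar{f}_D([\varphi]) = f_D(\varphi)$, so the real content is to check this is well defined: if $\u{\varphi} = \u{\varphi'}$ then $f_D(\varphi) = f_D(\varphi')$. This is where concreteness of $Y$ does all the work. Since $Y$ is concrete, it suffices to compare underlying functions, and for any point $d \maps 1 \to D$ naturality of $f$ gives
\[ \u{f_D(\varphi)}(d) = Y(d)\bigl(f_D(\varphi)\bigr) = f_1\bigl(X(d)(\varphi)\bigr) = f_1\bigl(\u{\varphi}(d)\bigr). \]
The right-hand side depends only on $\u{\varphi}$, so $\u{f_D(\varphi)} = \u{f_D(\varphi')}$ whenever $\u{\varphi} = \u{\varphi'}$, and concreteness of $Y$ then yields $f_D(\varphi) = f_D(\varphi')$. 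Thus $\bar{f}$ is well defined on equivalence classes.

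It remains to verify the routine points. Naturality of $\bar{f}$ follows from naturality of $f$ together with the description of the structure maps of $L(X)$ on classes; the triangle identity $R(\bar{f}) \circ \eta_X = f$ holds by the very definition $\bar{f}_D([\varphi]) = f_D(\varphi)$; and uniqueness is automatic because each $\eta_{X,D}$ is surjective, so any $g$ with $R(g)\circ\eta_X = f$ must agree with $\bar{f}$ on every class $[\varphi]$. Finally, the assignment $f \mapsto \bar{f}$ is inverse to $h \mapsto R(h)\circ\eta_X$, and it is natural in $X$ and $Y$ by inspection, giving the desired adjunction. I expect the only genuinely substantive step to be the well-definedness computation above; everything else is formal manipulation of the unit and its universal property.
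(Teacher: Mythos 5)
Your proof is correct and follows essentially the same route as the paper: both reduce the adjunction to the bijection $\bar{f}_D([\varphi]) = f_D(\varphi)$ and its inverse, with the only substantive point being well-definedness on equivalence classes. If anything your treatment is the more careful one --- the paper simply asserts well-definedness ``since the equivalence relation is defined by underlying functions,'' whereas you supply the pointwise naturality computation $Y(d)(f_D(\varphi)) = f_1(\u{\varphi}(d))$ that actually justifies it, and your unit-and-universal-property packaging yields naturality of the correspondence formally rather than via the explicit square-chasing in the paper.
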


\begin{proof}
Given a presheaf $X$, a concrete presheaf $Y$ and a natural
transformation $f\maps L(X)\to Y$, we obtain a natural transformation
$\tilde{f}\maps X\to R(Y)$ pointwise as $\tilde{f}_D\maps X(D)\to
R(Y)(D)$ defined by
\[\varphi\mapsto f_D([\varphi]),\]
where we think of $f_D([\varphi])$ in $Y(D)$ as a plot of $R(Y)(D)$ under the 
inclusion.  Conversely, given a natural transformation $f\maps X\to R(Y)$ we define 
$\tilde{f}_D\maps L(X)(D)\to Y(D)$ pointwise by
\[[\varphi]\mapsto f_D(\varphi),\]
which is well defined since the equivalence relation is defined by
underlying functions.  This defines a bijective correspondence.  To check
naturality in the first argument, it is sufficient to show that given
$h\maps X\to X'$ and $g\maps L(X')\to Y$ that the
following square commutes for every $D\in\D$.
\[
\xymatrix{
\hom(L(X')(D),Y(D))\ar[r]\ar[d] & \hom(X'(D),R(Y)(D))\ar[d]\\
\hom(L(X)(D),Y(D))\ar[r] & \hom(X(D),R(Y)(D))
}
\]
Along the top and right, $g_D$ gets sent to a map $(\varphi\mapsto
g_D([h_D([\varphi])))$ and along the left and bottom to a map $(\varphi\mapsto
g_D(L(h)_D([\varphi])))$.  These are equal since maps between presheaves
preserve the equivalence class.  Naturality in the second argument
follows similarly.
\end{proof}

\begin{lemma}
Any concrete presheaf on a concrete site is a separated presheaf.
\end{lemma}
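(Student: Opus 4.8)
The plan is to unpack what separatedness requires and reduce it to the concreteness hypothesis. Recall that a separated presheaf is one for which the gluing in the sheaf condition (Def.\ \ref{sheaf}) is unique when it exists, with existence dropped. So, given a covering family $(f_i\maps D_i\to D|i\in I)$ and two plots $\varphi,\varphi'\in X(D)$ satisfying $X(f_i)(\varphi)=X(f_i)(\varphi')$ for every $i$, I must show $\varphi=\varphi'$. Since $X$ is concrete, the assignment sending a plot in $X(D)$ to its underlying function $\u{\;\;}\maps\u{D}\to\u{X}$ is one-to-one, so it suffices to prove the equality of underlying functions $\u{\varphi}=\u{\varphi'}$ on $\u{D}=\hom(1,D)$.

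Next I would exploit the second axiom of a concrete site: the underlying functions $(\u{f_i}\maps\u{D_i}\to\u{D}|i\in I)$ are jointly surjective. Fix a point $d\in\u{D}$, that is, a morphism $d\maps 1\to D$. Joint surjectivity supplies an index $i$ and a point $d'\in\u{D_i}$, i.e.\ a morphism $d'\maps 1\to D_i$, with $f_i d'=d$. By contravariant functoriality of $X$, the relation $d=f_i d'$ gives $X(d)=X(d')\circ X(f_i)$, and therefore
\[ \u{\varphi}(d)=X(d)(\varphi)=X(d')\big(X(f_i)(\varphi)\big)=X(d')(\varphi_i), \]
where $\varphi_i\maps=X(f_i)(\varphi)$ denotes the common restriction. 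The identical computation applied to $\varphi'$ yields $\u{\varphi'}(d)=X(d')\big(X(f_i)(\varphi')\big)=X(d')(\varphi_i)$, which is the same value. Hence $\u{\varphi}(d)=\u{\varphi'}(d)$.

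Since $d\in\u{D}$ was arbitrary, this establishes $\u{\varphi}=\u{\varphi'}$, and concreteness then forces $\varphi=\varphi'$, completing the argument. The only real content is the observation that a plot's value at a point $d$ of $D$ is controlled entirely by its restriction to any covering object through which $d$ factors; the one thing to get right is the bookkeeping that $d=f_i d'$ makes $X(d)$ factor through $X(f_i)$, so that $\varphi_i$ determines $\u{\varphi}(d)$. I do not anticipate any genuine obstacle: joint surjectivity provides the factorization, and concreteness converts the resulting equality of underlying functions into equality of plots. Notably, the full compatibility of the family $\{\varphi_i\}$ is not needed here, only that the two candidate gluings share the same restrictions.
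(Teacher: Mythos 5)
Your argument is correct: joint surjectivity of the underlying functions of a covering family, together with the factorization $X(d)=X(d')\circ X(f_i)$ coming from $d=f_i d'$, shows any two plots with the same restrictions have equal underlying functions, and concreteness then forces them to be equal. The paper dismisses this lemma with the one-word proof ``Clear,'' and your writeup is exactly the verification it has in mind, so there is nothing to correct.
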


\begin{proof}
Clear.  
\end{proof}

It follows that for any concrete presheaf, sheafification 
is the same as {\it one} application of Grothendieck's plus 
construction.  This brings us to the following lemma:

\begin{lemma} \label{plus.construction}
Given a concrete presheaf $X$, $X^+$ is a concrete sheaf.  
\end{lemma}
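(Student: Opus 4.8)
The plan is to lean on two facts already established: $X^+$ is a sheaf (since $X$, being concrete, is separated, and one pass of the plus construction turns a separated presheaf into a sheaf), and the unit $\iota \maps X \to X^+$ is monic (again because $X$ is separated), so each $\iota_D \maps X(D) \to X^+(D)$ is injective. Only concreteness remains: for every $C \in \D$ I must show that $s \mapsto \u{s}$, from $X^+(C)$ to $\u{X^+}^{\u{C}}$, is injective. So I fix $s, s' \in X^+(C)$ with $\u{s} = \u{s'}$ and prove $s = s'$.

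First I would record the shape of such sections. By the plus construction every $s \in X^+(C)$ comes from a compatible family $\{\varphi_i \in X(C_i)\}$ over some covering family $(f_i \maps C_i \to C)$; then $\{\iota(\varphi_i)\}$ is compatible in the sheaf $X^+$ and $s$ is its unique gluing, so $X^+(f_i)(s) = \iota(\varphi_i)$. Write $s'$ likewise via $\{\varphi'_j \in X(C'_j)\}$ over $(f'_j \maps C'_j \to C)$. The single identity I need linking underlying functions to representing families is, for $d \in \u{C_i}$ regarded as $d \maps 1 \to C_i$,
\[ \u{s}(\u{f_i}(d)) = X^+(d)(X^+(f_i)(s)) = X^+(d)(\iota(\varphi_i)) = \iota_1(\u{\varphi_i}(d)), \]
using naturality of $\iota$ together with $\u{\varphi_i}(d) = X(d)(\varphi_i)$; the same holds for $s'$.

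To finish I would apply separatedness of $X^+$ twice, which sidesteps composing covering families — a move the coverage axiom by itself does not permit. By separatedness over $(f_i)$ it suffices to prove $X^+(f_i)(s) = X^+(f_i)(s')$ for each $i$; the left side is $\iota(\varphi_i)$. For the right side I apply the coverage axiom to the family $(f'_j)$ and the morphism $f_i \maps C_i \to C$, obtaining a covering $(h_k \maps E_k \to C_i)$ with factorizations $f_i h_k = f'_{j(k)} b_k$. Restricting along $h_k$ gives $X^+(h_k)(X^+(f_i)(s')) = \iota(\varphi'_{j(k)} b_k)$ and $X^+(h_k)(\iota(\varphi_i)) = \iota(\varphi_i h_k)$. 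By separatedness over $(h_k)$ it is then enough to show $\iota(\varphi_i h_k) = \iota(\varphi'_{j(k)} b_k)$; injectivity of $\iota_{E_k}$ reduces this to $\varphi_i h_k = \varphi'_{j(k)} b_k$ in $X(E_k)$, and concreteness of $X$ reduces it further to the equality of underlying functions $\u{\varphi_i h_k} = \u{\varphi'_{j(k)} b_k}$.

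This last equality is the only delicate point, and it is exactly where $\u{s} = \u{s'}$ is used. For $e \in \u{E_k}$ the displayed identity gives $\iota_1(\u{\varphi_i h_k}(e)) = \iota_1(\u{\varphi_i}(\u{h_k}(e))) = \u{s}(\u{f_i h_k}(e))$ and, symmetrically, $\iota_1(\u{\varphi'_{j(k)} b_k}(e)) = \u{s'}(\u{f'_{j(k)} b_k}(e))$. Since $f_i h_k = f'_{j(k)} b_k$ the two arguments in $\u{C}$ agree, and since $\u{s} = \u{s'}$ the right-hand sides agree; injectivity of $\iota_1$ then yields $\u{\varphi_i h_k}(e) = \u{\varphi'_{j(k)} b_k}(e)$, as required. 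I expect the real work to be organizational rather than analytic: keeping the two nested restrictions straight so that only the stated coverage axiom, never a composite of covers, is invoked.
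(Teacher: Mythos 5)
Your proof is correct, and it reaches the same destination as the paper's by a genuinely different route. The paper's proof replaces the coverage by the Grothendieck topology it generates, uses the explicit colimit description of $X^+(D)$ over covering sieves, and puts the two representing families on a common refinement by intersecting sieves ($R \cap R'$ is again a covering sieve), at which point concreteness of $X$ forces the families to coincide. You never leave the coverage formalism: you instead exploit that $X^+$ is a (separated) sheaf and that $\iota \maps X \to X^+$ is monic, and you manufacture the common refinement by one application of the coverage axiom wrapped in two nested appeals to separatedness of $X^+$ --- first over $(f_i)$, then over $(h_k)$ --- before dropping down to $X$ via injectivity of $\iota$ and concreteness. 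The shared core is the same in both arguments (compare the representing families on a refinement, where concreteness of $X$ and the hypothesis $\u{s}=\u{s'}$ finish the job), but your version avoids the paper's appeal to sieve-intersection closure, which is a property of Grothendieck topologies rather than of coverages, and so is more faithful to the paper's stated preference for working with coverages; the cost is the slightly more elaborate bookkeeping of the two nested restrictions, which you handle correctly, including the key naturality identity $\u{s}(\u{f_i}(d)) = \iota_1(\u{\varphi_i}(d))$ linking underlying functions of sections of $X^+$ to those of their representing plots. The one standard fact you invoke without proof --- that the class $s$ of a compatible family $\{\varphi_i\}$ satisfies $X^+(f_i)(s)=\iota(\varphi_i)$ --- is also used without comment in the paper's own argument, so this is an acceptable level of reliance on the plus construction's basic properties.
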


\begin{proof}
In the interest of presenting a simple argument, we now replace
the coverage on our concrete site $\D$ by the
Grothendieck topology $\mathcal{J}$ which has the same sheaves.

$X^+(D)$ is the colimit of a diagram (indexed by the sieves in
$\mathcal{J}(D)$) of sets of compatible families of plots in $X(D)$.
Given sieves $R$ and $R'$ indexed by the sets $I$ and $J$,
respectively, there is a function in the colimit diagram from the set
of compatible families for a sieve $R$ to the set of compatible
families for the sieve $R'$ exactly when $R' \subseteq R$.  This
function takes a compatible family $\lbrace \varphi_i \rbrace$ to the
compatible family $\lbrace \varphi_{j} \rbrace \subseteq \lbrace
\varphi_i \rbrace$ for $R'$.

Since the sheafification process is pointwise a colimit of sets, given
plots $\varphi,\varphi' \in X^+(D)$ with the same underlying functions
$\u{\varphi} = \u{\varphi'}$, there must be compatible families in the
diagram which are mapped to each $\varphi$ and $\varphi'$.  Since
given a morphism $f_i \maps C \to D$ in $R$ we have $X^+(f)(\varphi) =
\varphi_i$ and $X^+(f)(\varphi') = {\varphi'}_i$, it follows that if
the two plots are in the image of compatible families $\lbrace
\varphi_i \rbrace_{i\in I}$ and $\lbrace {\varphi'}_i \rbrace_{i\in
I}$ for the same sieve $R \in \mathcal{J}(D)$ with indexing set $I$,
then we have $\lbrace \varphi_i \rbrace_{i\in I} = \lbrace
{\varphi'}_i \rbrace_{i\in I}$.  Hence $\varphi = \varphi'$.

If $\varphi$ and $\varphi'$ are in the image of families for sieves
$R$ and $R'$ respectively, then we can show that each of these
functions factors through a set of compatible families for the common
refinement $R \cap R'$.  Since the sieve $R \cap R'$ is a subset of
each $R$ and $R'$, the existence of the functions factoring through
this set is guaranteed as long as $R \cap R'$ is in $\mathcal{J}(D)$.
That the intersection of two covering sieves is a covering sieve follows 
directly from the axioms of a
Grothendieck topology.  Since we have focused almost entirely on
coverages, we refer the the reader to Mac Lane and Moerdijk \cite{MM}
for explanation and proof.  Now the preimages of $\varphi$ and
$\varphi'$ will be sent to the same compatible family for $R \cap R'$
and thus $\varphi = \varphi'$.
\end{proof}

\begin{proposition} \label{colimit.prop}
The category $\D\Space$ has all (small) colimits.
\end{proposition}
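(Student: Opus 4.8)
The plan is to realize $\D\Space$ as a \emph{reflective} subcategory of the presheaf category $\Set^{\D^\op}$, which is cocomplete with colimits computed pointwise, and then invoke the standard fact that a reflective subcategory of a cocomplete category is itself cocomplete, with colimits obtained by applying the reflector to the colimit computed in the ambient category. So the real work is to produce the reflector and check it lands in $\D\Space$; the universal property then comes for free.

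First I would assemble the reflector from the two left adjoints already in hand. Concretization gives $L \dashv R$ (Lemma \ref{adjoint.lem}) between $\Set^{\D^\op}$ and $\Conc(\Set^{\D^\op})$. On concrete presheaves the plus construction computes sheafification, since a concrete presheaf is separated, and by Lemma \ref{plus.construction} the result is a concrete sheaf; thus $(-)^+$ restricts to a functor $\Conc(\Set^{\D^\op}) \to \D\Space$. I claim $(-)^+$ is left adjoint to the inclusion $J \maps \D\Space \to \Conc(\Set^{\D^\op})$: for a concrete presheaf $X$ and a concrete sheaf $Y$, using that $X^+ = S(X)$ together with the sheafification adjunction $S \dashv I$, one obtains
\[ \hom(X^+, Y) \iso \hom(X, I Y) \iso \hom(X, J Y), \]
the first bijection coming from $S\dashv I$ and fullness of $\D\Space$ in $\Sh(\D)$, and the last because $\Conc(\Set^{\D^\op})$ is a full subcategory of $\Set^{\D^\op}$, so that $IY$ and $JY$ name the same presheaf.

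Composing the adjunctions $L \dashv R$ and $(-)^+ \dashv J$ yields $T := (-)^+ \circ L$ left adjoint to $R \circ J$, and $R \circ J$ is precisely the inclusion $\D\Space \hookrightarrow \Set^{\D^\op}$. Hence $\D\Space$ is reflective in $\Set^{\D^\op}$ with reflector $T$. Given a small diagram $F \maps C \to \D\Space$ with inclusion $\iota \maps \D\Space \to \Set^{\D^\op}$, I would then set its colimit to be $T({\rm colim}\,(\iota F))$, the inner colimit being the pointwise presheaf colimit. Its universal property is immediate: for any concrete sheaf $A$,
\[ \hom(T({\rm colim}\,\iota F), A) \iso \hom({\rm colim}\,\iota F, \iota A) \iso \lim_{\alpha} \hom(\iota F(\alpha), \iota A) \iso \lim_{\alpha} \hom(F(\alpha), A). \]

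The substantive content is not this formal assembly but the supporting lemmas already proved, above all that the plus construction sends a concrete presheaf to a concrete sheaf (Lemma \ref{plus.construction}), which is exactly what guarantees $T$ takes values in $\D\Space$ rather than merely in $\Sh(\D)$. The main point to take care with is therefore the verification that $(-)^+$ is genuinely left adjoint to $J$, i.e.\ that sheafifying a concrete presheaf stays inside the concrete sheaves and that the $\hom$-set identifications across $\Set^{\D^\op}$, $\Conc(\Set^{\D^\op})$, and $\Sh(\D)$ are compatible. Once that is in place, cocompleteness of $\D\Space$ is purely formal, and in particular any colimit of $\D$ spaces is computed by taking the pointwise presheaf colimit, concretizing, and applying the plus construction once.
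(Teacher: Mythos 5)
Your proposal is correct and follows essentially the same route as the paper: compute the colimit pointwise in $\Set^{\D^\op}$, concretize, and apply the plus construction once, with Lemma \ref{plus.construction} doing the real work of keeping the result inside $\D\Space$. The only difference is one of packaging and care --- you explicitly verify the adjunction $(-)^+ \dashv J$ and invoke reflectivity, where the paper simply asserts that the composite of the two left adjoints preserves colimits and fixes $\D$ spaces --- which is a welcome tightening of a step the paper leaves implicit.
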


\begin{proof}
Given a diagram of $\D$ spaces $F \maps A \to\D\Space$, 
let $\tilde{F} \maps A \to \Set^{\D^\op}$ be the underlying
diagram of presheaves.  We can compute
the colimit $P$ of $\tilde F$ pointwise.  Given any presheaf we can 
concretize and then sheafify to obtain a $\D$ space.  Since each of these 
functors is a left adjoint, this entire process preserves colimits.  Also, 
if the presheaf is already a $\D$ space then the process will have no 
effect.  So we can apply this process to $P$ and $\tilde{F}$, and it 
follows that the $\D$ space obtained from the presheaf $P$ is the colimit 
of $F$ in $\D\Space$.
\end{proof} 

It is interesting to note that in two of his papers,
Chen called spaces satisfying axioms 1 and 3 but not necessarily axiom 2 
`predifferentiable' spaces \cite{Chen:1975,Chen:1977}.  These are the same 
as concrete presheaves on $\Chen$.
Chen described a systematic process for improving any predifferentiable
space to a Chen space.  This process is just the plus construction!
The point is that by Lemma \ref{plus.construction}, we can turn
a concrete presheaf into a concrete sheaf using
the plus construction.  

The following result is an easy spinoff of what we have done:

\begin{proposition}
Every $\D$ space is a colimit of representable $\D$ spaces.
\end{proposition}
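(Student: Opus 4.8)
The plan is to exhibit the required colimit using the \emph{density theorem} (the co-Yoneda lemma) for presheaves, and then to transport it into $\D\Space$ by means of Prop.\ \ref{colimit.prop}. The point is that the relevant colimit, computed at the level of presheaves, is already the $\D$ space $X$ we started with, so concretization and sheafification have nothing left to do.

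First I would form the \textbf{category of elements} $\int X$: its objects are pairs $(D,\varphi)$ with $D\in\D$ and $\varphi\in X(D)$, and a morphism $(C,\psi)\to(D,\varphi)$ is a morphism $f\maps C\to D$ in $\D$ with $X(f)(\varphi)=\psi$. There is a canonical diagram
\[ \Phi\maps \int X \to \D\Space, \qquad (D,\varphi)\mapsto \hom(-,D), \]
sending the morphism $f$ above to $\hom(-,f)$; this lands in $\D\Space$ by Prop.\ \ref{representable.prop}. Composing with the inclusion $\Sh(\D)\hookrightarrow\Set^{\D^\op}$ gives a diagram of presheaves, and the density theorem \cite{MM} identifies its pointwise colimit in $\Set^{\D^\op}$ with $X$: the colimiting cocone assigns to $(D,\varphi)$ the natural transformation $\hom(-,D)\To X$ that corresponds to $\varphi$ under Yoneda.

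Next I would invoke Prop.\ \ref{colimit.prop}, which computes the colimit of $\Phi$ in $\D\Space$ by taking the colimit of the underlying presheaf diagram and then concretizing and sheafifying. By the previous step that presheaf colimit is $X$, and $X$ is already a concrete sheaf; as noted in the proof of Prop.\ \ref{colimit.prop}, this process has no effect on a presheaf that is already a $\D$ space. Hence the colimit of $\Phi$ in $\D\Space$ is $X$, which is exactly the assertion that $X$ is a colimit of representable $\D$ spaces.

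The only nontrivial ingredient is the density theorem, and if one prefers a self-contained argument to a citation this is where the work lies. Given a presheaf $Y$ together with a cocone assigning to each $(D,\varphi)$ a map $\hom(-,D)\To Y$ that is compatible with the morphisms of $\int X$, the Yoneda lemma reads off an element of $Y(D)$ for each $\varphi\in X(D)$, and compatibility makes these assemble into a single natural transformation $X\To Y$; one then checks this assignment is inverse to restriction along the cocone. I expect this verification --- routine but slightly fiddly --- to be the main obstacle, everything else being a direct appeal to results already established.
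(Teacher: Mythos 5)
Your proposal is correct and follows essentially the same route as the paper: both rest on the standard fact that any presheaf is the colimit of the canonical diagram of representables over its category of elements, and both then transfer this to $\D\Space$ by observing that concretization followed by sheafification preserves colimits and fixes $X$ (as well as each representable, by Prop.\ \ref{representable.prop}). The only difference is that you spell out the density theorem explicitly where the paper simply cites it.
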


\begin{proof}
It is well known that any presheaf is the colimit of representable
presheaves \cite{MM}.  So, given a $\D$ space $X$, there is a diagram of 
representables having the underlying presheaf of $X$ as 
its colimit in $\Set^{\D^\op}$.  As in Prop.\  \ref{colimit.prop}, 
applying the concretization functor $L$ and then the sheafification 
functor $S$, we can send this diagram into $\D\Space$ 
while preserving colimits.  Since each representable is a $\D$ space by Prop.\ 
\ref{representable.prop} and $X$ was chosen to be a $\D$ space, we obtain a diagram
exhibiting $X$ as a colimit of representables in $\D\Space$.  
\end{proof}

Most of our results on generalized spaces can be summarized in this theorem:

\begin{theorem}\label{quasitopos.thm}
For any concrete site $\D$, the category of $\D$ spaces is a quasitopos 
with all (small) limits and colimits.   
\end{theorem}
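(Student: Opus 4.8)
The plan is to assemble the theorem directly from the four propositions already proved, since a quasitopos with all small limits and colimits is, by the definition given above, nothing more than the conjunction of local cartesian closure, finite colimits, and a weak subobject classifier, together with (co)completeness. So the proof will be essentially a bookkeeping argument that cites the relevant results in the right order; the real content has been distributed across the preceding propositions.

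First I would recall that, by definition, a quasitopos is a locally cartesian closed category equipped with finite colimits and a weak subobject classifier, and that the notion of weak subobject classifier (Def.\ \ref{subobject.class}) presupposes the ambient category has finite limits. This fixes the logical skeleton of the proof as four slots to fill: finite limits (so that the classifier even makes sense), local cartesian closure, a weak subobject classifier, and finite colimits. I would then fill them. Local cartesian closure is exactly Prop.\ \ref{loc.cart.prop}. All small limits, and hence in particular the finite limits needed for Def.\ \ref{subobject.class}, are provided by Prop.\ \ref{limit.prop}. All small colimits, hence finite colimits, come from Prop.\ \ref{colimit.prop}. The weak subobject classifier $\Omega$ is supplied by Prop.\ \ref{subobject.prop}, with its explicit description as the sheaf $D\mapsto 2^{\u{D}}$ from Prop.\ \ref{omega}. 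Combining these four facts yields that $\D\Space$ is a quasitopos, and the limit and colimit propositions simultaneously upgrade ``finite'' to ``all small,'' giving the full statement.

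The only thing to verify beyond citation is that these ingredients fit together coherently, namely that the finite limits guaranteed by Prop.\ \ref{limit.prop} are the same ones implicit in the cartesian-closed structure of the slices $\cald_B$ and in the pullback square defining the classifier, so that there is no mismatch of structure. I expect no genuine obstacle here: all of these limits are computed pointwise and agree with the underlying limits of sheaves on $\D$, and the slice structure in Prop.\ \ref{loc.cart.prop} was built from exactly these subspace and product constructions. In this sense the theorem is a summary of the section rather than a new result, and the proof can be given in a few lines pointing to Props.\ \ref{limit.prop}, \ref{subobject.prop}, \ref{loc.cart.prop}, and \ref{colimit.prop}.
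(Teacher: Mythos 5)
Your proposal is correct and follows exactly the paper's own argument: the proof is just a citation of Props.\ \ref{limit.prop}, \ref{subobject.prop}, \ref{loc.cart.prop}, and \ref{colimit.prop} against the definition of quasitopos. Your extra remark about the coherence of the pointwise limits is a reasonable sanity check but is not something the paper found necessary to spell out.
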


\begin{proof}
Recall that a `quasitopos' is a locally cartesian closed category
with finite colimits and a weak subobject classifier.  We showed that
$D\Space$ has all limits in Prop.\ \ref{limit.prop}, that it has
weak subobject classifier in Prop.\ \ref{subobject.prop},
that it is locally cartesian closed in Prop.\ \ref{loc.cart.prop},
and that it has all colimits in Prop.\ \ref{colimit.prop}.
\end{proof}

\subsection*{Acknowledgements}

This work had its origin in collaborations between the first author,
Urs Schreiber \cite{BaezSchreiber:2004,BaezSchreiber:2005} and Toby
Bartels \cite{Bartels:2006}. We thank James Dolan for invaluable help,
such as explaining the notion of `concrete sheaf' that we use here,
and pointing out the example of simplicial complexes.  We also thank
$n$-Category Caf\'e regulars including Bruce Bartlett, Todd Trimble
and especially Andrew Stacey for online discussions, and Dan
Christensen and Chris Rogers for many helpful conversations.

\end{document}